\documentclass[alphabetic]{elsarticle}

\usepackage{enumerate, longtable}
\usepackage{amsmath, amscd, amsfonts, amsthm, amssymb, latexsym, comment, stmaryrd, graphicx, dsfont, mathtools}
\usepackage{xcolor}
\usepackage[all]{xy}
\usepackage{fullpage}
\usepackage{tikz}
\usepackage[
        colorlinks,
        linkcolor=red,  citecolor=blue,
        backref
]{hyperref}

\usepackage{amstext} 
\usepackage{array}
\usepackage{cleveref}

\newtheorem{claim}{Claim}[section]

\newtheorem{conj}[claim]{Conjecture} 
\newtheorem{thm}[claim]{Theorem}
\newtheorem{prop}[claim]{Proposition}
\newtheorem{cor}[claim]{Corollary}
\newtheorem{lem}[claim]{Lemma}

\theoremstyle{definition}

\newtheorem{rem}[claim]{Remark}

\newtheorem{notation}[claim]{Notation}

\newcommand{\case}[1]{\vspace{0.3cm}\noindent\framebox{#1.}}

\newcommand{\F}{{\mathbb F}}
\newcommand{\Z}{{\mathbb Z}}
\newcommand{\Q}{{\mathbb Q}}

\renewcommand{\P}{{\mathbb P}}

\newcommand{\con}{{\mathrm{con}}}
\newcommand{\mon}{{\mathrm{mon}}}
\newcommand{\Gal}{{\mathrm{Gal}}}
\newcommand{\disc}{{\mathrm{disc}}}

\DeclarePairedDelimiter{\floor}{\lfloor}{\rfloor}

\numberwithin{equation}{section}
\begin{document}

\title{Probabilistic Galois Theory in Function Fields}



\author[inst1]{Alexei Entin}
\ead{aentin@tauex.tau.ac.il}

\affiliation[inst1]{organization={School of Mathematical Sciences, Tel Aviv University},
            addressline={Tel Aviv University}, 
            city={Tel Aviv},
            postcode={69978}, 
            country={Israel}}

\author[inst1]{Alexander Popov}
\ead{alexanderp@mail.tau.ac.il}
\journal{arXiv}
\date{}

\begin{abstract}

We study the irreducibility and Galois group of random polynomials over function fields. We prove that a random polynomial $f=y^n+\sum_{i=0}^{n-1}a_i(x)y^i\in\F_q[x][y]$ with i.i.d coefficients $a_i$ taking values in the set $\{a(x)\in\mathbb{F}_q[x]: \deg a\leq d\}$ with uniform probability, is irreducible with probability tending to $1-\frac{1}{q^d}$ as $n\to\infty$, where $d$ and $q$ are fixed. We also prove that with the same probability, the Galois group of this random polynomial contains the alternating group $A_n$. Moreover, we prove that if we assume a version of the polynomial Chowla conjecture over $\mathbb{F}_q[x]$, then the Galois group of this polynomial is actually equal to the symmetric group $S_n$ with probability tending to $1-\frac{1}{q^d}$. We also study the other possible Galois groups occurring with positive limit probability. Finally, we study the same problems with $n$ fixed and $d\to\infty$. 

\end{abstract}

\begin{keyword} Galois theory \sep polynomials \sep finite fields \sep probability
\end{keyword}
\maketitle

\section{Introduction and main results}

Probabilistic Galois theory studies statistical properties of the Galois group of a random polynomial drawn from some natural ensemble of polynomials over a global field. The classical case is the field of rational numbers $\Q$, where the random polynomials are taken with integer coefficients. Intuitively, a random polynomial of degree $n$ is almost surely irreducible over $\mathbb{Q}$, and moreover, its Galois group over $\mathbb{Q}$ is almost surely isomorphic to $S_n$. Of course, to state this formally we first need to define what we mean by a random polynomial. There are several models studied in the literature.

The simplest model is called the large box model. We fix some $n\geq 1$, 
and for a given $H\in\mathbb{N}$ we choose a random monic polynomial 
$f\in\mathbb{Z}[x]$ of degree $n$ with independent coefficients drawn 
uniformly from $\{-H,\ldots,H\}$. We keep $n$ fixed, and take $H\to\infty$. It 
was proved a century ago by Hilbert (as a special case of his famous 
irreducibility theorem) that $$\lim\limits_{H\to\infty}\mathbb{P}(\text{$f$ 
is irreducible over $\mathbb{Q}$})=1.$$ Moreover, if $f$ is separable (this 
is automatic if it is irreducible) then we may consider its Galois group 
(over $\mathbb{Q}$), which we denote by $G_f$, equipped with an action on 
the $n=\deg f$ roots of $f$. It was proved by van der Waerden 
\cite{VDW36} that \begin{equation}\label{eq: van der 
waerden}\lim\limits_{H\to\infty}\mathbb{P}(G_f=S_n)=1.\end{equation} 
Moreover, van der Waerden conjectured that in fact $$\mathbb{P}
(G_f=S_n)=O(H^{-1}),$$ where $n$ is fixed and $H\to\infty$. Recently this conjecture was proved by Bhargava \cite{Bha21_}. 

Another natural model is the small box model (also called 
the restricted coefficients model), in which we fix the 
set of possible coefficients and take the degree to 
infinity. In \cite{OdPo93} Odlyzko and Poonen 
conjectured that a random polynomial $P(x)=x^n+a_{n-
1}x^{n-1}+\ldots+a_1x+1\in\mathbb{Z}[x]$ with independent 
coefficients $a_1,\ldots,a_{n-1}$ taking values in $\{0,1\}$ 
uniformly is irreducible converges to $1$ as 
$n\to\infty$. Konyagin \cite{Kon99}
proved that $$\mathbb{P}(\text{$P(x)$ is 
irreducible})>\frac{c}{\log n}$$ for some constant $c>0$. 
This was later improved to $$\mathbb{P}(\text{$P(x)$ is 
irreducible})>c$$ by Bary-Soroker, Koukoulopoulos and Kozma \cite{BKK23}. 

In \cite{BaKo20} Bary-Soroker and Kozma proved that if we pick random monic polynomial $f\in\mathbb{Z}[x]$ of degree $n$ with coefficients in the set $\{1,2,\ldots,210\}$ then:
\begin{equation}\label{eqn:(1,1)}\lim\limits_{n\to\infty}\mathbb{P}(\text{$f$ is irreducible over $\mathbb{Q}$})=1.\end{equation}
Moreover:
\begin{equation}\label{eqn:(1,2)}\lim\limits_{n\to\infty}\mathbb{P}(\text{$G_f=S_n$ or $A_n$})=1.\end{equation}
One can replace $210$ by any integer divisible by at least $4$ distinct prime numbers. In fact, this is used only in order to prove the statement about the irreducibility of $f$. Given that $f$ is irreducible we almost surely have $G_f=A_n$ or $S_n$, this part can be proved if we replace $210$ by any integer $L\geq 2$. The proof of the latter also appears in \cite{BaKo20}. 

In \cite{BrVa19} Breuillard and Varj{\'{u}} proved that assuming the Extended Riemann Hypothesis, \eqref{eqn:(1,1)} and \eqref{eqn:(1,2)} hold if we replace $210$ by any integer $L\geq 2$. Their methods involve studying the roots of random polynomials over a finite field via random walks, and then using this information to study irreducibility of random polynomials over $\mathbb{Z}$ and their Galois group. In \cite{BKK23} Bary-Soroker, Kozma and Koukoulopoulos proved that the same result holds for any $L\geq 35$ unconditionally. 

In the present work we study the analogous problems over the rational function field $\mathbb{F}_q(x)$, where $\mathbb{F}_q$ is a finite field with $q$ elements. We note that the Galois theory of random \emph{additive} polynomials over $\F_q(x)$ was studied by the first author, Bary-Soroker and McKemmie \cite{BEM24}, but here we consider the more obvious model of a random polynomial. From now on suppose that $q$ is a prime power. We consider both the large box model and the small box model. First we state our main result for the large box model, an analog of (\ref{eq: van der waerden}) which will be easily derived from the function field version of Hilbert's Irreducibility Theorem.

\begin{thm}\label{1.1}\label{thm: large box}Let $n$ be a fixed natural number and $q$ a fixed prime power. We choose a random monic polynomial of the form:
$$f(x,y)=y^n+a_{n-1}(x)y^{n-1}+\ldots+a_1(x)y+a_0(x)\in\mathbb{F}_q[x][y],$$
where $a_0, a_1,\ldots,a_{n-1}\in\mathbb{F}_q[x]$ are polynomials of degree at most $d$, chosen uniformly and independently at random. When $f$ is separable, denote by $G_f$ its Galois group over $\mathbb{F}_q(x)$. Then
$$\lim\limits_{d\to\infty}\mathbb{P}(\text{$f$ is irreducible and separable over $\mathbb{F}_q(x)$, $G_f=S_n$})=1.$$
\end{thm}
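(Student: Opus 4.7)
The plan is to derive this from the function field analog of Hilbert's Irreducibility Theorem, made quantitative over the ``box'' $V_d := \{a\in\F_q[x] : \deg a\leq d\}$.

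Introduce indeterminates $T_0,\ldots,T_{n-1}$ over $K := \F_q(x)$ and consider the generic polynomial $F(y) = y^n + T_{n-1}y^{n-1} + \ldots + T_0 \in K(T_0,\ldots,T_{n-1})[y]$. A standard computation shows $\Gal(F/K(T_0,\ldots,T_{n-1})) = S_n$: writing $F(y)=\prod_{i=1}^n(y-Y_i)$ over an algebraic closure, the $Y_i$ are algebraically independent over $K$ and the $T_i$ are, up to sign, the elementary symmetric functions in the $Y_i$, so $K(Y_1,\ldots,Y_n)/K(T_0,\ldots,T_{n-1})$ is Galois with group $S_n$. Since $K = \F_q(x)$ is a Hilbertian field (rational function fields over any field are Hilbertian), the set of specializations $(T_i)\mapsto (a_i)\in K^n$ for which the resulting polynomial $f_a(y)=y^n+a_{n-1}y^{n-1}+\ldots+a_0$ has Galois group $S_n$ over $K$ is a Hilbertian subset of $K^n$, with complement a thin set $\mathcal{T}$ in Serre's sense.

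The remaining task is quantitative: show $|\mathcal{T}\cap V_d^n| = o(q^{n(d+1)})$ as $d\to\infty$. For each proper maximal subgroup $M < S_n$, the corresponding piece of $\mathcal{T}$ is defined by the $K$-rationality of a point on a finite cover of the parameter space of degree $[S_n:M]\geq 2$; a Lang--Weil/Weil-type count applied to the variety of exceptional tuples $(a_i)\in V_d^n$ should give a bound of the form $O(q^{n(d+1)-c_M d})$ for some $c_M>0$ depending only on $M$ and $n$. Summing over the finitely many maximal subgroups $M$ (a fixed set since $n$ is fixed) gives the desired saving, and $G_f = S_n$ automatically forces $f$ to be separable and irreducible (for $n\geq 2$; the case $n=1$ is trivial).

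The main obstacle is this quantitative control of $|\mathcal{T}\cap V_d^n|$. The cleanest route is to cite an existing quantitative Hilbert irreducibility theorem for $\F_q(x)$. Alternatively, one can argue directly by reduction modulo primes $\pi\in\F_q[x]$: as $d\to\infty$ the reductions $(a_i\bmod\pi)$ become jointly uniform and independent in $(\F_q[x]/(\pi))^n$, so the factorization type of $f\bmod\pi$ mimics that of a uniformly random monic degree-$n$ polynomial over $\F_q[x]/(\pi)$. Chebotarev for function fields then produces Frobenius elements of prescribed cycle type in $G_f$, and combining the data from several primes forces $G_f = S_n$ via a Jordan-type generation argument. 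Because $n$ is fixed, all relevant group-theoretic and geometric data are bounded, keeping this argument routine --- in sharp contrast to the small-box regime ($d$ fixed, $n\to\infty$) which forms the core of the paper.
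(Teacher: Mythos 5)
Your main route---observe that the generic polynomial $y^n+T_{n-1}y^{n-1}+\ldots+T_0$ has Galois group $S_n$ over $\F_q(x,T_0,\ldots,T_{n-1})$ and then invoke a quantitative Hilbert irreducibility theorem over $\F_q(x)$ to show the exceptional specializations in the box $V_d^n$ have density $o(1)$ as $d\to\infty$---is exactly the paper's proof, which cites the quantitative HIT of Bary-Soroker and Entin (Theorem \ref{thm: HIT}) together with the genericity statement (Proposition \ref{3.2}). The additional sketches (Lang--Weil counts per maximal subgroup, or reduction mod primes plus Chebotarev) are only outlined, but since you correctly identify citing the existing quantitative HIT as the cleanest and sufficient route, the proposal is sound and essentially coincides with the paper's argument.
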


In the small box model we fix $d$ and take $n\to\infty$. This model is more complicated. Unlike over $\mathbb{Z}$, here the polynomial is not almost surely irreducible in the small box model, even if we condition on $a_0\neq 0$. See \Cref{rem: content} below for more details on this interesting phenomenon. First we consider the question of irreducibility.

\begin{thm}\label{1.2}\label{thm: irreducibility} Let $d$ be a fixed natural number and $q$ a fixed prime power. We choose a random monic polynomial of the form
\begin{equation*}
f(x,y)=y^n+a_{n-1}(x)y^{n-1}+\ldots+a_1(x)y+a_0(x)\in\mathbb{F}_q[x][y],
\end{equation*}

\noindent where $a_0, a_1,\ldots,a_{n-1}$ are polynomials of degree at most $d$ drawn uniformly and independently at random. Then
$$\lim\limits_{n\to\infty}\mathbb{P}(\text{$f$ is irreducible})=1-\frac{1}{q^d}.$$
\end{thm}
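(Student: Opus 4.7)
The key move is to view $f$ as a polynomial in $x$ with coefficients in $\F_q[y]$: write
$$f(x,y)=\sum_{j=0}^d A_j(y)\,x^j,$$
where $A_0(y)=y^n+\sum_{i=0}^{n-1}a_{i,0}y^i$ and $A_j(y)=\sum_{i=0}^{n-1}a_{i,j}y^i$ for $j\ge 1$. The $A_j$ are independent: $A_0$ is uniform monic of degree $n$, and each $A_j$ ($j\ge 1$) is uniform of degree $<n$. Since $f$ is monic in $y$, Gauss's lemma says that $f$ is irreducible in $\F_q[x,y]$ iff (i) $\gcd(A_0,\ldots,A_d)=1$ in $\F_q[y]$ (i.e.\ $f$ is primitive as an element of $\F_q[y][x]$) and (ii) $f$ is irreducible in $\F_q(y)[x]$. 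Any nontrivial factorization $f=GH$ must have $\deg_y G,\deg_y H\ge 1$ (since $f$ is $y$-monic), and, assuming primitivity, also $\deg_x G,\deg_x H\ge 1$ (otherwise one factor would lie in $\F_q[y]$ and divide the content). So reducibility splits into (R1) $\gcd(A_0,\ldots,A_d)\ne 1$, or (R2) $f$ primitive with a factorization whose factors both have positive $x$- and $y$-degrees.

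The entire limiting mass $1/q^d$ comes from (R1). For any monic irreducible $p\in\F_q[y]$ with $\deg p\le n$, the reductions $A_j\bmod p$ are independent and uniform in $\F_q[y]/p$, so $\P(p\mid\gcd)=|p|^{-(d+1)}$; for distinct irreducibles of total degree $\le n$ the corresponding divisibility events are mutually independent by CRT. Möbius inversion together with a routine tail bound ($\sum_{\deg M>n}|M|^{-(d+1)}=O(q^{-nd})$) yields
$$\lim_{n\to\infty}\P\bigl(\gcd(A_0,\ldots,A_d)=1\bigr)=\prod_{p\text{ monic irred}}\bigl(1-|p|^{-(d+1)}\bigr)=\zeta_{\F_q[y]}(d+1)^{-1}=1-q^{-d}.$$

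For (R2) I would count pairs $(G,H)$ of $y$-monic factors of type $(\deg_y G,\deg_x G,\deg_y H,\deg_x H)=(k,r,n-k,s)$ with $1\le k\le n-1$ and $r,s\ge 1$, $r+s\le d$. The number of such pairs is at most $q^{k(r+1)+(n-k)(s+1)}$, so divided by the ensemble size $q^{n(d+1)}$ the probability of a factorization of this type is at most $q^{kr+(n-k)s-nd}$. A short optimization shows the exponent is maximized at $k\in\{1,n-1\}$ with $(r,s)\in\{(1,d-1),(d-1,1)\}$, reaching $-n-d+2$; summing the $O(nd^2)$ factorization types gives $\P(R2)=O(nd^2 q^{-n-d+2})\to 0$. (For $d=1$ the constraints $r,s\ge 1,\ r+s\le 1$ are infeasible, so primitivity alone forces irreducibility.)

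Combining the two estimates gives $\lim_{n\to\infty}\P(f\text{ irreducible})=1-1/q^d$. The substantive content is the Euler-product computation in the second step, which produces exactly the target value $\zeta_{\F_q[y]}(d+1)^{-1}=1-q^{-d}$ and explains the strange finite limit. The apparent obstacle is the Hilbert-irreducibility-type bound for (R2), but it is in fact routine because primitivity already forces $\deg_x G,\deg_x H\ge 1$, which drains enough entropy to make such factorizations exponentially rare relative to the ensemble.
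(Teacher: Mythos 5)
Your proof is correct, and it diverges from the paper's in the step that matters. Both arguments begin identically: swap the roles of $x$ and $y$, invoke Gauss's lemma to split reducibility into ``$\con_x(f)\neq 1$'' versus ``primitive but reducible in $\F_q(y)[x]$'', and handle the content condition by a M\"obius sieve over $\F_q[y]$ (the paper's Lemma \ref{lem: gcd} evaluates the sieve exactly as $1-q^{-d}$ using the identity for $M(n)$, whereas you take the limit via the Euler product $\zeta_{\F_q[y]}(d+1)^{-1}=1-q^{-d}$; both are fine). The difference is in the second half: the paper applies the quantitative Hilbert Irreducibility Theorem over function fields (Theorem \ref{thm: HIT}, from Bary-Soroker--Entin) to the generic polynomial $(T_0+y^n)+T_1x+\cdots+T_dx^d$ to conclude that $f$ is irreducible over $\F_q(y)$ with limit probability $1$, while you avoid HIT entirely with an elementary counting bound: under primitivity any nontrivial factorization $f=GH$ (with $G,H$ normalized $y$-monic) must have $\deg_yG,\deg_yH\ge1$ and $\deg_xG,\deg_xH\ge1$ with $\deg_xG+\deg_xH\le d$, and counting such pairs gives at most $q^{kr+(n-k)s-nd}\le q^{-n-d+2}$ per degree type, so $O(nd^2q^{-n-d+2})\to0$ overall; your optimization and the $d=1$ degenerate case are both handled correctly, and your primitivity condition even subsumes the paper's separate treatment of $f$ constant in $x$. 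What each buys: your route is self-contained and even quantitative (exponential decay of the error from the non-content source of reducibility), whereas the paper's route through Theorem \ref{thm: HIT} is the one that extends to control the Galois group of the specialized polynomial (needed for Theorem \ref{thm: large box} and implicitly throughout), so the HIT machinery is not wasted in the paper even though, for the bare irreducibility statement, your counting argument suffices.
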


\begin{rem} A variant of this result with leading term $a_n(x)y^n,\,\deg a_n\le d$ in place of $y^n$ was proved by Carlitz \cite{Car65}. The same limit probability $1-\frac 1{q^d}$ is obtained in that model. Theorem \ref{thm: irreducibility} was proved independently of Carlitz's work (of which we learnt only after completing the present work) and by a rather different method. Both results can be derived by either method, though our method is more robust to further variations.\end{rem} 

Next we consider the distribution of the Galois group in the small box model, conditional on $f$ being irreducible. This part is based on the methods of \cite{BaKo20}.

\begin{thm}\label{1.3}\label{thm: small box main} Let $d$ be a fixed natural number and $q$ a fixed prime power. For $n\geq 1$, consider a random monic polynomial of the form
\begin{equation*}
f(x,y)=y^n+a_{n-1}(x)y^{n-1}+\ldots+a_1(x)y+a_0(x)\in\mathbb{F}_q[x][y],
\end{equation*}
where $a_0, a_1,\ldots,a_{n-1}$ are polynomials of degree at most $d$, chosen uniformly and independently at random. If $f$ is separable denote by $G_f$ its Galois group over $\mathbb{F}_q(x)$. Then
$$\lim\limits_{n\to\infty}\mathbb{P}(\text{$G_f=S_n$ or $A_n$}\ |\ \text{$f$ is irreducible})=1.$$
\end{thm}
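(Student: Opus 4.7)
The plan is to mimic the approach of Bary-Soroker and Kozma \cite{BaKo20}, with monic irreducible polynomials $\pi\in\F_q[x]$ playing the role that rational primes $\ell\in\Z$ play in the number-field argument. The first step is a group-theoretic reduction via Jordan's classical theorem: if $G\leq S_n$ is transitive and contains a cycle of prime length $p$ with $n/2<p\leq n-3$, then $G\supseteq A_n$. Such a prime $p$ exists for every sufficiently large $n$ by Bertrand's postulate (together with a trivial boundary adjustment). Theorem \ref{thm: irreducibility} shows that the event ``$f$ is irreducible'' has probability bounded away from $0$ and automatically yields transitivity of $G_f$, so it suffices to prove that with conditional probability tending to $1$, $G_f$ contains an element whose cycle structure features a $p$-cycle of prime length $p\in(n/2,n-2)$.

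To locate such elements I would invoke the familiar dictionary between Frobenius conjugacy classes and factorization patterns: for a monic irreducible $\pi\in\F_q[x]$ of degree $m$ with $\pi\nmid\disc_y f$, the Frobenius at $\pi$ in $G_f$ has cycle type equal to the multiset of degrees of the irreducible factors of the reduction $\bar f:=f(x,y)\bmod\pi\in\F_{q^m}[y]$. Hence the problem reduces to exhibiting, with high probability, an irreducible $\pi\in\F_q[x]$ of bounded degree such that $\bar f$ has an irreducible factor of degree exactly $p$.

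The crucial input is that for $m\leq d+1$ the evaluation map $\{a\in\F_q[x]:\deg a\leq d\}\to\F_q[x]/(\pi)\cong\F_{q^m}$ is surjective with uniform fibers, so $\bar f$ is uniformly distributed over monic polynomials of degree $n$ in $\F_{q^m}[y]$. By the standard count for factorizations of random polynomials over a finite field (the $\F_{q^m}[y]$-analog of the Erd\H{o}s--Tur\'an distribution), such a uniform polynomial has an irreducible factor of degree exactly $p$ with probability $\tfrac{1}{p}+O(1/p^2)=\Theta(1/n)$. One then ranges over all $\asymp q^m/m$ irreducibles $\pi$ of a fixed degree $m\leq d$, summing these probabilities to obtain an expected number of ``good'' $\pi$ of order $q^m/(mn)$, and attempts to convert this first-moment bound into a lower bound on $\mathbb{P}(\text{at least one good }\pi)$ via a second-moment argument.

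The principal obstacle is controlling the covariances between the events indexed by distinct primes. For $\pi_1\neq\pi_2$ with $\deg\pi_1+\deg\pi_2\leq d+1$ the Chinese Remainder Theorem yields exact joint uniformity, and hence genuine independence of $\bar f_{\pi_1}$ and $\bar f_{\pi_2}$. When $\deg\pi_1+\deg\pi_2>d+1$ the joint distribution is merely supported on a proper affine subspace of $(\F_{q^m})^2$, and its deviation from the product measure must be bounded by character-sum estimates on $\F_q[x]/(\pi_1\pi_2)$. Once this correlation control --- the $\F_q[x]$-analog of the analytic sieve at the heart of \cite{BaKo20} --- is in place, the second-moment argument gives $A_n\subseteq G_f$ with conditional probability $1-o(1)$, forcing $G_f\in\{A_n,S_n\}$ almost surely conditional on irreducibility.
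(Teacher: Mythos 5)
Your reduction breaks down at the counting step, and the failure is structural, not technical. Since $q$ and $d$ are \emph{fixed} while $n\to\infty$, the only primes $\pi\in\F_q[x]$ for which the reduction $\bar f=f\bmod\pi$ is uniformly distributed are those with $\deg\pi\le d+1$, and there are only $O_{q,d}(1)$ of them. For any single such $\pi$, the probability that a uniform monic polynomial of degree $n$ over $\F_{q^{\deg\pi}}$ has an irreducible factor of degree exactly $p$ for a \emph{fixed} prime $p\in(n/2,n-3]$ is $\Theta(1/p)=\Theta(1/n)$, and even allowing all primes in that range it is only $\sum_{n/2<p\le n-3}1/p\asymp 1/\log n$. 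So the expected number of ``good'' $\pi$ is $O_{q,d}(1/\log n)\to 0$; your own estimate ``$q^m/(mn)$'' already tends to $0$ once you remember that $q^m\le q^{d+1}$ is a constant (you seem to be implicitly letting the pool of primes grow with $n$, which is the situation in the large box model, not here). A first moment tending to zero gives, by Markov, an \emph{upper} bound tending to zero on the probability that any good $\pi$ exists, so no second-moment or correlation analysis can rescue the argument: the event you want to exhibit simply does not occur with positive limit probability. This is exactly the obstruction that forces the Bary-Soroker--Kozma method (which the paper follows) to abandon Jordan's theorem.

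The paper's route uses a single specialization, $\tilde f=f(0,y)$, which is exactly uniform in $\F_q[y]_n^{\mon}$, and replaces the search for one prime-length cycle by a statement about the \emph{entire} large-cycle structure: by \cite[Lemma 9]{BaKo20}, a random permutation $\sigma\in S_n$ almost surely has the property that no element of any transitive subgroup not containing $A_n$ agrees with $\sigma$ on all cycles of length $>\lfloor n^\alpha\rfloor$; combined with \cite[Lemma 6]{BaKo20}, which says the factorization type of a uniform polynomial and the cycle type of a uniform permutation are within $O(1/r)$ in total variation when restricted to parts of size $\ge r$, one concludes that the degree pattern of $\tilde f$ (outside a low-degree part controlling repeated factors, the event $B_n$) is almost surely incompatible with every transitive $G\not\supset A_n$. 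A lifting-of-Frobenius argument then places an element with that large-cycle pattern inside $G_f$, and transitivity (irreducibility) forces $G_f\supseteq A_n$. If you want to salvage your outline, you must replace Jordan's theorem by a criterion of this robust, whole-cycle-type kind; the $p$-cycle criterion is too rare an event to be detected through boundedly many residue fields.
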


Theorem \ref{1.3} only tells us that the Galois group of $f$ is almost surely $S_n$ or $A_n$, given that $f$ is irreducible over $\mathbb{F}_q(x)$. Distinguishing between $S_n$ and $A_n$ has proven to be a formidable challenge. Currently it is not known how to do this in the small box model over $\mathbb{Z}$, and neither can we do it over $\mathbb{F}_q[x]$. However, we can do this conditionally on a uniform version of a standard open conjecture, namely the polynomial Chowla conjecture (we note that a similar conditional result is not known over $\Z$ and it is not clear how to obtain one). Before we state this variant, recall the definition of the Liouville $\lambda$-function: $\lambda(g)=(-1)^k$, where $g=P_1\cdots P_k$ is the prime factorization of $g\in\F_q[x]$. A polynomial defined over a unique factorization domain is called \emph{content-free} if the GCD of its coefficients is 1.

\begin{conj}\label{1.4} \label{conj: chowla}[Uniform version of the polynomial Chowla Conjecture] Let $q$ be a fixed power of an odd prime, $c>0$ a fixed constant. Then for any separable (in the variable $T$) polynomial $F\in\F_q[x][T]$ we have
$$\sum\limits_{h\in\F_q[y]\,\mathrm{monic}\atop{\deg h=n}} \lambda(F(y,h(y)))=o(q^n),\quad n\to\infty$$
uniformly in $F$ with $\deg_y(F)\le c n,\,\deg_T(F)\le c $. 
\end{conj}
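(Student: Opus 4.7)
The plan is to exploit the identity $\lambda(g) = (-1)^{\deg g}\,\mathrm{sgn}(\mathrm{Frob}_q)$, valid whenever $g \in \F_q[y]$ is squarefree, where $\mathrm{Frob}_q$ is viewed as a permutation on the roots of $g$ in $\overline{\F_q}$. Applying this with $g = F(y,h(y))$, and checking that the subset of monic $h$ of degree $n$ for which $F(y,h(y))$ fails to be squarefree of full degree in $y$ is exceptional --- a resultant argument should give that its size is $o(q^n)$ uniformly in $F$ of the allowed complexity --- reduces Conjecture \ref{conj: chowla} to showing that
\begin{equation*}
\sum_{h\text{ monic},\,\deg h = n}\mathrm{sgn}\bigl(\mathrm{Frob}_q\curvearrowright\{\alpha:F(\alpha,h(\alpha))=0\}\bigr)=o(q^n).
\end{equation*}

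For this character sum the natural geometric setup is as follows. Let $V_n\cong\mathbb{A}^n_{\F_q}$ parametrize monic $h$ of degree $n$, and consider the finite cover
\begin{equation*}
Z=\{(h,y)\in V_n\times\mathbb{A}^1:F(y,h(y))=0\}\longrightarrow V_n
\end{equation*}
of generic degree $N=\deg_yF+n\deg_TF=O(n)$. Let $G\le S_N$ be the geometric monodromy group of its Galois closure; the sign character $\mathrm{sgn}:G\to\{\pm 1\}$ yields a rank-one lisse $\overline{\Q}_\ell$-sheaf $\mathcal L$ on an open $U\subseteq V_n$, and the sum above becomes, up to boundary corrections, $\sum_i(-1)^i\mathrm{Tr}(\mathrm{Frob}_q\mid H^i_c(U_{\overline{\F_q}},\mathcal L))$. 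Deligne's Weil II then bounds each contribution by $q^{n-1/2}\dim H^i_c$, provided the top cohomology $H^{2n}_c$ vanishes, which happens exactly when $\mathrm{sgn}|_G$ is nontrivial, i.e., when $G\not\subseteq A_N$.

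The hard part --- and the reason this remains a conjecture rather than a theorem --- is \emph{uniformity} as $n\to\infty$. The parameter space $V_n$ has dimension $n$, the cover $\widetilde Z\to V_n$ has degree $\Theta(n)$, and the geometric complexity of $\widetilde Z$ (Betti numbers, wild ramification loci in positive characteristic, Euler characteristic contributions to $\mathcal L$) can in principle grow exponentially in $n$. To achieve $o(q^n)$ one needs $\sum_{i<2n}\dim H^i_c(U_{\overline{\F_q}},\mathcal L)=o(q^{n/2})$, which no general Betti-number bound in this dimension delivers. Showing that $G\supseteq A_N$ generically (so that $H^{2n}_c=0$) is in spirit analogous to Theorems \ref{thm: large box}--\ref{thm: small box main} of the present paper and should be accessible; the genuinely open step is the quantitative control on the lower cohomology uniformly across all $F$ of the allowed complexity. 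A plausible route forward is to adapt the Sawin--Shusterman framework for the function-field Chowla conjecture to the two-parameter family $F(y,T)$, but substantial new input appears to be required in the regime $\deg_yF=\Theta(n)$, which is why we only state the result as a conjecture and use it conditionally in the sequel.
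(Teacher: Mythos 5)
The statement you were asked about is Conjecture \ref{conj: chowla}; the paper does not prove it (nor does anyone else --- it is assumed as a hypothesis in Theorems \ref{thm: chowla} and \ref{thm: reducible chowla}), and you correctly recognized this: your submission is a strategy sketch plus an honest identification of the obstruction, not a claimed proof. Your sketch is the standard route, and it matches the way the paper itself connects $\lambda$ to Galois-theoretic data: the identity $\lambda(g)=(-1)^{\deg g}\mathrm{sgn}(\mathrm{Frob}_q)$ for squarefree $g$ is exactly Pellet's formula (Lemma \ref{lem: pellet}) combined with Stickelberger, and the obstruction you name --- uniformity when $\deg_y F$ grows like $n$ with $q$ fixed, where the relevant geometric/cohomological complexity is not controlled by any general bound --- is precisely why the paper notes that the Sawin--Shusterman result (which needs $q$ large relative to $c$) does not suffice.

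One concrete inaccuracy in the sketch, worth flagging: the set of monic $h$ of degree $n$ for which $F(y,h(y))$ fails to be squarefree is \emph{not} $o(q^n)$, even for $F(y,T)=T$, where the non-squarefree values have density about $1/q$; no resultant argument can make a positive-density set exceptional. So the reduction to the Frobenius-sign sum cannot simply discard those $h$. The standard repair is to decompose over square divisors (sum over $P^2\mid F(y,h(y))$, splitting into small and large primes, with Carmon-type input for the large ones --- compare the paper's use of \cite{Car21} in Lemma \ref{lem: carmon} and Proposition \ref{prop: sep for chowla}), which complicates but does not change the shape of the problem; the genuinely open step remains the one you identified, namely square-root cancellation uniform over $F$ with $\deg_y F\le cn$ at fixed $q$.
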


Sawin and Shusterman \cite{SaSh22} proved a weaker version of the polynomial Chowla Conjecture over $\mathbb{F}_q[x]$ 
\footnote{Sawin and Shusterman state the conjecture and their results with the M\"obius function $\mu$ in place of the Liouville function $\lambda$. In the function field setting this can be shown to be equivalent using the results of \cite{Car21}. Chowla's original conjecture is stated in terms of $\lambda$ \cite[p. 96]{Cho65}}.
Unfortunately it is insufficient for our next result, which requires the full Conjecture \ref{1.4} (in their result $q$ has to grow with $c$). 
Assuming Conjecture \ref{1.4} we can improve the result of Theorem \ref{1.3} and show that the Galois group of $f$ is almost surely isomorphic to $S_n$. 
\begin{thm}\label{1.5}\label{thm: chowla} Let $d$ be a fixed natural number and $q$ a fixed \emph{odd} prime power. Assume Conjecture \ref{1.4} holds. Let
\begin{equation*}
f(x,y)=y^n+a_{n-1}(x)y^{n-1}+\ldots+a_1(x)y+a_0(x)\in\mathbb{F}_q[x][y],
\end{equation*}
be a random polynomial, where $a_0, a_1,\ldots,a_{n-1}\in\F_q[x]$ are polynomials of degree at most $d$, chosen uniformly and independently at random. If $f$ is separable denote by $G_f$ its Galois group. Then
$$\lim\limits_{n\to\infty}\mathbb{P}(\text{$G_f=S_n$}\ |\ \text{$f$ is irreducible over $\mathbb{F}_q(x)$})=1.$$
\end{thm}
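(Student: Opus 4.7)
By Theorem~\ref{thm: small box main}, $G_f\in\{A_n,S_n\}$ with probability tending to $1$ conditional on $f$ being irreducible, and by Theorem~\ref{thm: irreducibility} irreducibility itself has positive limiting probability $1-q^{-d}$. It therefore suffices to prove the unconditional bound $\mathbb{P}(G_f\subseteq A_n)=o(1)$. Since $q$ is odd, $G_f\subseteq A_n$ is equivalent to the discriminant $\Delta:=\mathrm{disc}_y(f)\in\mathbb{F}_q[x]$ being a square in $\mathbb{F}_q(x)^\times$, which (as $\mathbb{F}_q[x]$ is a UFD) is the same as $\Delta$ being a square in $\mathbb{F}_q[x]$. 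The goal is therefore to prove $\mathbb{P}(\Delta\text{ is a square})=o(1)$.

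The plan is to realize $\Delta$ in a form allowing application of Conjecture~\ref{conj: chowla}, namely $\Delta=F(x,h(x))$ for some separable $F\in\mathbb{F}_q[x][T]$ with $\deg_TF\le c$ and $\deg_xF\le cn$, and some polynomial $h\in\mathbb{F}_q[x]$ of degree linear in $n$. The naive choice $h=a_i$ (one of the coefficients) fails because each $\deg a_i\le d$ is fixed. Instead we parametrize a carefully chosen slice of the coefficient space by a single large-degree polynomial $h$: fixing background polynomials and letting the remaining freedom be driven by $h(x)$, e.g.\ along an affine slice of the form $a_{i}(x)=b_i(x)+h(x)\,c_i(x)$ for a generic Zariski-open choice of $(b_i,c_i)\in\mathbb{F}_q[x]_{\le d}$. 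Along such a slice $\Delta$ becomes $F(x,h(x))$ with $\deg_TF$ bounded depending only on $d$, and $F$ can be made separable in $T$ for generic background data. Averaging the resulting Chowla bound over the background then recovers the estimate on the original uniform distribution on coefficient tuples.

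Conjecture~\ref{conj: chowla} then yields $\sum_{h,\,\deg h=n}\lambda(F(x,h(x)))=o(q^n)$, so that $\lambda(\Delta)\in\{\pm 1\}$ is equidistributed. Since $\lambda(\Delta)=+1$ is necessary but not sufficient for $\Delta$ to be a square, this alone gives only $\mathbb{P}(\Delta\text{ is a square})\le\tfrac12+o(1)$. To sharpen the bound to $o(1)$, we amplify by applying Conjecture~\ref{conj: chowla} repeatedly to modifications of $F$: for instance, to the twisted polynomials $P(x)F(x,T)$ for primes $P\in\mathbb{F}_q[x]$ of bounded degree, and to auxiliary polynomials encoding local factorization data. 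Combined with classical Weil-type multiplicative character sum bounds at small primes, these estimates pin down enough independent multiplicative invariants of $\Delta$ that the squareness event -- which forces all of them to be trivial -- has vanishing probability as $n\to\infty$.

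\textbf{Main obstacle.} The central technical difficulty is the construction of the slice realizing $\Delta=F(x,h(x))$ with both $\deg_TF$ bounded uniformly in $n$ and $F$ separable in $T$, since $\Delta$ has positive $n$-dependent degree in each individual coefficient $a_i$; reducing this to a constant-degree dependence requires a subtle (quasi-)linear substitution and a delicate algebraic-geometric analysis of the universal discriminant along one-parameter families, together with a verification that the separability in $T$ is generic in the background data. A secondary subtlety is the amplification from $\lambda$-equidistribution to genuine square-avoidance, as $\lambda$ only controls the global parity of the number of prime factors (with multiplicity) and not the individual prime valuations that actually obstruct squareness; bridging this gap is precisely where the \emph{uniformity} in $F$ built into Conjecture~\ref{conj: chowla} is essential, and why the weaker Sawin--Shusterman bound (in which $q$ must grow with $c$) is insufficient for our purposes.
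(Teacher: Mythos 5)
Your opening reduction (Theorems \ref{thm: irreducibility} and \ref{thm: small box main} plus the discriminant criterion, so that it suffices to show $\P(\disc_y(f)\mbox{ is a square})=o(1)$) is exactly the paper's, but the core of your argument has a genuine gap. First, the slice $a_i(x)=b_i(x)+h(x)c_i(x)$ with $\deg h$ growing linearly in $n$ is incompatible with the model: every coefficient lies in $\F_q[x]_{\le d}$ with $d$ fixed, so there is no room for a large-degree parameter in the variable $x$, and in any case $\disc_y(f)$ has degree growing with $n$ in each individual coefficient, so no substitution of this kind produces $\Delta=F(x,h(x))$ with $\deg_T F$ bounded; you name this as the ``main obstacle'' but supply no mechanism to overcome it. Second, even granting equidistribution of $\lambda(\Delta)$, your amplification is not an argument: twisting by a fixed prime $P$ only flips the sign of $\lambda$, and neither Conjecture \ref{conj: chowla} nor Weil-type bounds control the individual prime valuations of $\Delta$ that squareness actually constrains, so applying $\lambda$ to the discriminant itself cannot get past the trivial $\tfrac 12+o(1)$ bound.

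The paper's route differs precisely at these two points and never applies Chowla to the discriminant. It writes $f=f_0+h$ where $h\in\F_q[y]_{\le n-1}$ collects the $x$-constant parts of the coefficients, so the Chowla parameter is a polynomial of degree about $n$ in $y$ rather than in $x$, and it specializes $x\mapsto\tau_i$ at $m$ non-conjugate points of $\F_{q^k}\setminus\F_q$ with $k$ a large odd prime. If $\disc_y(f)=a\cdot\square$ in $\F_q[x]$, then each $\disc_y(f(\tau_i,y))$ is $a$ times a square in $\F_{q^k}$, and Pellet's formula (Lemma \ref{lem: pellet}), valid once $f(\tau_i,y)$ is squarefree, converts this into $\lambda\bigl(f_0(\tau_i^{q^j},y)+h(y)\bigr)=(-1)^n\chi(a)$ for all $i,j$; here the relevant polynomial $T+f_0(\tau_i^{q^j},y)$ is \emph{linear} in $T$, and taking norms $N_i=\prod_j\bigl(T+f_0(\tau_i^{q^j},y)\bigr)$ yields $\F_q[y]$-polynomials of $T$-degree $k$. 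The squareness event then forces the nonnegative quantity $2^{-m}\prod_{i=1}^m\bigl(1+(-1)^n\chi(a)\lambda(N_i(y,h(y)))\bigr)$ to equal $1$, and averaging over $h$ and applying Conjecture \ref{conj: chowla} to the separable products $\prod_{i\in S}N_i$ bounds the probability by $2^{-m}+o(1)$, with the squarefreeness of the specializations and the separability of $N_{f_0}$ supplied by Carmon's theorem (Lemma \ref{lem: carmon}) and Lemma \ref{lem: carmon cond}. That $m$-fold specialization, each factor costing $\tfrac12$, is the amplification your sketch lacks; without a substitute for it, your proposal does not reach the conclusion.
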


\begin{rem} The reason we need to assume that $q$ is odd in the proof of Theorem \ref{thm: chowla} is that we make use of the discriminant criterion (for the Galois group being contained in $A_n$) and Pellet's formula (Lemma \ref{lem: pellet} below), both of which need to be modified in characteristic 2. We expect the assertion to be valid without this assumption and our general proof strategy to apply, but the details of the argument would differ significantly. The same applies to Theorem \ref{thm: reducible chowla} below.\end{rem}

We also study the case when $f$ is not irreducible. In this case there is a positive probability for $f$ not being separable. However, if the irreducible factors of $f$ are separable then the splitting field of $f$ is usually still a Galois extension of $\mathbb{F}_q(x)$, and we may consider the Galois group of this extension. Recall that the content of a polynomial with coefficients in a unique factorization domain is the GCD of its coefficients, which we always take to be monic. A polynomial $f\in\F_q[x][y]$ can be viewed also as a polynomial in $\F_q[y][x]$ and we denote by $\con_x(f)\in\F_q[y]$ its content as a polynomial over $\F_q[y]$. As we will see below, the possibility of $\con_x(f)\neq 1$ is the main source of reducible values of $f$ in the small box model (this function field phenomenon has no analog over the integers). The following result extends \Cref{1.3} to the reducible case.

\begin{thm}\label{1.6}\label{thm: reducible} Let $d$ be a fixed natural number and $q$ a fixed prime power. Let
$$f(x,y)=y^n+a_{n-1}(x)y^{n-1}+\ldots+a_1(x)y+a_0(x)\in\mathbb{F}_q[x][y]$$ be a random polynomial,
where $a_0, a_1,\ldots,a_{n-1}\in\mathbb{F}_q[x]$ are polynomials of degree $\le d$ chosen uniformly and independently at random. We can uniquely write $f(x,y)=c(y)g(x,y)$ where $c=\con_x(f)\in\mathbb{F}_q[y]$ and $g(x,y)\in\mathbb{F}_q[x][y]$ is monic in $y$ with $\con_x(g)=1$. Then with probability tending to $1$ as $n\to\infty$, the splitting field of $f$ over $\mathbb{F}_q(x)$ is a Galois extension of $\mathbb{F}_q(x)$. Further, if we denote the Galois group of the splitting field of $c(y)$ over $\mathbb{F}_q$ by $C$ (note that it is cyclic since $\F_q$ is a finite field), then with probability tending to $1$ as $n\to\infty$ the Galois group of the splitting field of $f$ over $\mathbb{F}_q(x)$ is isomorphic to one of the following groups:
\begin{enumerate}
\item[(a)] 
$C\times S_{\deg g}$.
\item[(b)]  
$C\times A_{\deg g}$.
\item[(c)] $C\times_{C_2}S_{\deg g}:=\{(\sigma,\tau)\in C\times S_{\deg g}:\phi(\sigma)=\psi(\tau)\}$, where $C$ is of even order, $C_2$ is the cyclic group of order 2 and $\phi:C\to C_2,\,\psi:S_{\deg g}\to C_2$ are the unique epimorphisms from $C,S_{\deg g}$ to $C_2$.
\end{enumerate}
\end{thm}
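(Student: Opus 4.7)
The plan is to decompose $f=cg$ with $c,g$ coprime in $\mathbb{F}_q[x,y]$, analyze the splitting fields $M$ of $c$ and $K$ of $g$ separately, and then unwind the compositum $L=MK$ via the standard fibered product formula.

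Coprimality of $c$ and $g$ is automatic: any common irreducible factor in $\mathbb{F}_q[x,y]$ must be a polynomial in $y$ alone (since $c\in\mathbb{F}_q[y]$), but any such factor of $g$ would divide $\con_x(g)=1$, a contradiction. Hence $L=MK$. The splitting field $M$ of $c\in\mathbb{F}_q[y]$ over $\mathbb{F}_q(x)$ is the constant field extension $\mathbb{F}_{q^m}(x)$ with $m=|C|$, which is always Galois with Galois group $C$.

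The main step is to show that with probability tending to $1$, the polynomial $g$ is irreducible and separable over $\mathbb{F}_q(x)$ with $G_g\in\{S_{\deg g},A_{\deg g}\}$: this is the analogue of \Cref{thm: irreducibility} and \Cref{thm: small box main} for $g$ in place of $f$. Indeed, conditioning on $c=c_0$, the bijection $g\mapsto c_0 g$ shows that $g$ is uniformly distributed over monic polynomials in $y$ of degree $n-\deg c_0$ with $y^i$-coefficients of $x$-degree $\le d$ subject to $\con_x(g)=1$. Since $\mathbb{E}[\deg c]=O(q^{-d})$ (a monic irreducible $p\in\mathbb{F}_q[y]$ divides $c$ with probability $O(q^{-(d+1)\deg p})$), $\deg c$ is bounded in probability and $\deg g\to\infty$; the proofs of \Cref{thm: irreducibility} and \Cref{thm: small box main} should carry over with only minor modifications to incorporate the content-free conditioning, an event of probability tending to $1-q^{-d}$ (bounded away from $0$).

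Granting this, $L=MK$ is Galois and
$$\Gal(L/\mathbb{F}_q(x))\;\cong\;\Gal(M/\mathbb{F}_q(x))\times_{\Gal(M\cap K/\mathbb{F}_q(x))}\Gal(K/\mathbb{F}_q(x)).$$
Letting $\mathbb{F}_{q^s}$ denote the algebraic closure of $\mathbb{F}_q$ inside $K$, we have $K\cap\overline{\mathbb{F}_q}(x)=\mathbb{F}_{q^s}(x)$, whence $M\cap K=\mathbb{F}_{q^{\gcd(m,s)}}(x)$. For $n\ge 5$, simplicity of $A_n$ forces the geometric Galois group of $g$ to be either $G_g$ or $A_{\deg g}$, so $s\in\{1,2\}$. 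A small case check then yields the three listed groups: if $G_g=A_{\deg g}$ then $s=1$ and we are in case (b); if $G_g=S_{\deg g}$ with geometric Galois group also $S_{\deg g}$ then $s=1$ and we are in case (a); and if $G_g=S_{\deg g}$ with geometric Galois group $A_{\deg g}$ then $s=2$, placing us in case (a) when $m$ is odd and case (c) when $m$ is even. The main obstacle will be the content-free adaptation of the proofs of \Cref{thm: irreducibility} and \Cref{thm: small box main} to $g$; the remainder is a routine Galois-theoretic unwinding.
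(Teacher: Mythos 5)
Your proposal is correct and follows essentially the same route as the paper: decompose $f=c\cdot g$, transfer Theorems \ref{thm: irreducibility} and \ref{thm: small box main} to the primitive part $g$ by conditioning on the content (whose degree is bounded in probability), and then determine $\Gal(f/\F_q(x))$ as a subdirect product of $C$ and $\Gal(g/\F_q(x))\in\{S_{\deg g},A_{\deg g}\}$ with $\deg g\ge 5$. The only cosmetic difference is in the last step, where the paper invokes Goursat's lemma on the subgroup of $C\times\Gal(g/\F_q(x))$ with surjective projections, while you compute $M\cap K$ via the field of constants of $K$ and use the fibered-product description of $\Gal(MK/\F_q(x))$; the two arguments amount to the same classification of cyclic quotients of $S_{\deg g}$ and $A_{\deg g}$.
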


As in \Cref{1.3}, we are unable to distinguish between the three cases (a),(b),(c) unconditionally. However, assuming Conjecture \ref{1.4} there is just one possibility for the Galois group occuring with positive limit probability. 

\begin{thm}\label{1.7}\label{thm: reducible chowla} Assume Conjecture \ref{1.4}. In the setup of Theorem \ref{1.6} assume additionally that $q$ is odd. Then with probability tending to 1 as $n\to\infty$ the irreducible factors of $f$ are separable and the Galois group of the splitting field of $f$ over $\F_q(x)$ is isomorphic to $C\times S_{\deg g}$.
\end{thm}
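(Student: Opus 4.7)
The plan is to combine \Cref{thm: reducible} with the Chowla-based machinery underlying \Cref{thm: chowla}, via a discriminant reduction from $g$ to $f$. First, I translate cases (b) and (c) of \Cref{thm: reducible} into a condition on $\disc_y(g)$: comparing $\F_q(x)(\sqrt{\disc_y g})$ (the fixed field of $A_{\deg g}$ inside the splitting field of $g$) with the unique quadratic subfield $\F_{q^2}(x)=\F_q(x)(\sqrt\nu)$ of the constant-field splitting field $\F_{q^{|C|}}(x)$ of $c$ (where $\nu\in\F_q^\times$ is any fixed non-square; this subfield exists only when $|C|$ is even), case (b) corresponds to $\disc_y(g)\in\F_q(x)^{\times 2}$, case (c) to $\disc_y(g)\in\nu\cdot\F_q(x)^{\times 2}$, and case (a) to neither. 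Hence the theorem reduces to showing $\mathbb{P}\bigl(\disc_y(g)\in\F_q^\times\cdot\F_q(x)^{\times 2}\bigr)\to 0$.

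Next, I pass from $\disc_y(g)$ to $\disc_y(f)$. The multiplicativity of the discriminant applied to $f=c\cdot g$ (with $c\in\F_q[y]$) gives
\[
\disc_y(f)=\disc_y(c)\cdot\disc_y(g)\cdot\res_y(c,g)^2,
\]
with $\disc_y(c)\in\F_q$ and $\res_y(c,g)^2\in\F_q[x]^{\times 2}$. With probability $\to 1$ (as part of the conclusion of \Cref{thm: reducible}) $c$ is separable, so $\disc_y(c)\in\F_q^\times$, whence $\disc_y(f)$ and $\disc_y(g)$ represent the same coset of $\F_q^\times\cdot\F_q(x)^{\times 2}$ in $\F_q(x)^\times$. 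It therefore suffices to show $\mathbb{P}\bigl(\disc_y(f)\in\F_q^\times\cdot\F_q(x)^{\times 2}\bigr)\to 0$.

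Finally, I upgrade the Chowla argument of \Cref{thm: chowla}. That theorem proves, under \Cref{conj: chowla}, $\disc_y(f)\notin\F_q(x)^{\times 2}$ with probability $\to 1$, by expressing $\disc_y(f)=F(x,a_0(x))$ for a separable-in-$T$ polynomial $F\in\F_q[x][T]$ of controlled bidegree and invoking \Cref{conj: chowla} on $\sum_{a_0}\lambda(F(x,a_0(x)))$. Since $q$ is odd, only one further coset $\nu\cdot\F_q(x)^{\times 2}$ must be ruled out. I do so by rerunning the same argument with an auxiliary twist detecting the square class of the leading coefficient of $\disc_y(f)$: multiplying $\disc_y(f)$ by a carefully chosen polynomial (say a fixed linear factor $x-t_0$) shifts the parity of a selected prime exponent so that the resulting twisted Liouville sum distinguishes the two square cosets. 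The main technical obstacle is precisely this enhancement: because the Liouville function is blind to scalar factors, coupling the leading-coefficient square class of $\disc_y(f)$ to the Liouville behavior of its monic part requires a nontrivial modification of the Chowla input used in \Cref{thm: chowla}. Together with the original estimate, this gives $\disc_y(f)\notin\F_q^\times\cdot\F_q(x)^{\times 2}$ with probability $\to 1$, completing the proof.
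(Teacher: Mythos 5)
Your translation of cases (b),(c) into the single condition $\disc_y(g)\in\F_q^\times\cdot\F_q(x)^{\times 2}$ is correct and matches the mechanism the paper uses (the discriminant criterion for $\Gal(g/\F_{q^r}(x))\subset A_{\deg g}$). The first genuine gap is in your transfer from $g$ to $f$: the claim that $c=\con_x(f)$ is separable with probability tending to $1$ is false, and it is not part of the conclusion of \Cref{thm: reducible}. By \Cref{prop: content}, conditional on $\deg c=k$ the content $c$ is uniform among monic degree-$k$ polynomials, hence non-squarefree with probability $1/q$ for every $k\ge 2$; summing over $k$ gives a \emph{positive} limit probability (of order $q^{-2d-1}$) that $\disc_y(c)=0$ and hence $\disc_y(f)=0$. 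On that event the identity $\disc_y(f)=\disc_y(c)\disc_y(g)\res_y(c,g)^2$ carries no information about the square class of $\disc_y(g)$, so cases (b),(c) are not excluded there. Moreover, even off that event, the Chowla machinery of Section \ref{sec: chowla} cannot simply be "rerun" on $f$: \Cref{thm: disc} and \Cref{prop: sep for chowla} include the hypothesis $\con_x=1$ in an essential way (the small-prime case $\pi\in\F_q[y]$ is eliminated precisely by showing $\pi\mid\con_x$, and with $c\neq 1$ the specializations $f(\tau_i,y)$ are divisible by $c(y)$ and need not be squarefree, which breaks the Pellet step). The paper's route avoids both problems: conditional on $\deg\con_x(f)=k$, the primitive part $g$ is a fresh uniform polynomial of degree $n-k$ conditioned on primitivity, so \Cref{thm: disc} applies \emph{to $g$} and yields that a.a.s.\ $\disc_y(g)\neq a t^2$ for every $a\in\F_q$; Goursat then leaves only $C\times S_{\deg g}$.

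The second issue is your step 3. The statement you need, that the discriminant is a.a.s.\ not of the form $a t^2$ for \emph{any} $a\in\F_q$ (not merely not a square), is already exactly \Cref{thm: disc} as stated; no "twisted Liouville sum" is required. The reason is that the coset is detected at the specializations: if $\disc_y=a t^2$ then Pellet's formula gives $\lambda\bigl(f_0(\tau_i^{q^j},y)+h(y)\bigr)=(-1)^n\chi_k(a)=(-1)^n\chi(a)$, a fixed sign for all $i,j$, and the $2^{-m}$ product trick plus \Cref{conj: chowla} applies verbatim for each fixed $a\in\F_q^\times$. Your proposed enhancement would in any case not work as described: $\lambda$ is blind to unit factors, so $\lambda(\nu t^2)=\lambda(t^2)$, and multiplying by a fixed factor $x-t_0$ flips $\lambda$ by the same sign on both cosets, so it cannot separate $\F_q(x)^{\times 2}$ from $\nu\F_q(x)^{\times 2}$. (Your description of the proof of \Cref{thm: chowla} as applying \Cref{conj: chowla} directly to $\disc_y(f)=F(x,a_0(x))$ is also inaccurate — that $F$ has $T$-degree growing with $n$, violating the conjecture's uniformity range; the paper applies the conjecture to the bounded-degree norm forms $N_i(y,T)$ — but this is secondary since you may quote \Cref{thm: disc} as a black box.) In short: apply \Cref{thm: disc} to the primitive part after conditioning on the content degree, and drop both the $f$-level discriminant transfer and the twisting step.
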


To describe the distribution of the Galois groups more precisely, we supplement Theorems \ref{1.6} and \ref{1.7} with the following proposition (proved at the end of Section \ref{sec:prelim}) regarding the distribution of the number $\deg g$ and the order of the group $C$.

\begin{prop}\label{prop: content} In the setup of Theorem \ref{thm: reducible}, for any $0\le k\le n-1$ and monic $c\in\F_q[y],\deg c=k$ we have
$$\P(\con_x(f)=c,\,\deg g=n-k)=\left(1-\frac 1{q^d}\right)\frac 1{q^{k(d+1)}}$$ (note that we have an exact equality and not a limit).
It follows from the elementary theory of finite fields that the order of $C=\Gal(c(y)/\F_q[x])=\Gal(c(y)/\F_q)$ is $|C|=\mathrm{lcm}(\deg P_1,\ldots,\deg P_m)$, where $c=\prod_{i=1}^mP_i^{e_i}$ is the prime factorization of $c$.
\end{prop}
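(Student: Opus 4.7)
The plan is to reduce the computation to a Möbius inversion over $\F_q[y]$ after rewriting $f$ with $\F_q[y]$-valued coefficients.

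First, expand $a_i(x) = \sum_{j=0}^d a_{i,j} x^j$ with the $a_{i,j}\in\F_q$ independent and uniform, and regroup to obtain $f(x,y) = \sum_{j=0}^d b_j(y)\, x^j$, where $b_0(y) = y^n + \sum_{i=0}^{n-1} a_{i,0}\, y^i$ and $b_j(y) = \sum_{i=0}^{n-1} a_{i,j}\, y^i$ for $j \ge 1$. These $d+1$ polynomials depend on disjoint blocks of the $a_{i,j}$, so they are independent; $b_0$ is uniform over monic polynomials of degree $n$ in $\F_q[y]$, and each $b_j$ ($j\ge 1$) is uniform over all polynomials of degree $\le n-1$. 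By definition, $\con_x(f) = \gcd(b_0, b_1, \ldots, b_d)$.

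Next, for any monic $c \in \F_q[y]$ of degree $k \le n$, there are $q^{n-k}$ monic polynomials of degree $n$ divisible by $c$, and $q^{n-k}$ polynomials of degree $\le n-1$ divisible by $c$ (including $0$). Hence $\P(c \mid b_j) = q^{-k}$ for every $j$, and by independence $\P(c \mid \con_x(f)) = q^{-k(d+1)}$. Applying Möbius inversion on the divisibility poset of monic polynomials in $\F_q[y]$, and writing any monic multiple of $c$ of degree $\le n$ as $c h$ with $h$ monic of degree $\le n-k$, one obtains
$$\P(\con_x(f) = c) \;=\; \frac{1}{q^{k(d+1)}} \sum_{h\,\mathrm{monic},\,\deg h \le n-k} \frac{\mu(h)}{q^{(d+1)\deg h}}.$$

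Now I invoke the zeta-function identity $\sum_{h\,\mathrm{monic}} \mu(h)\, u^{\deg h} = 1 - qu$, which follows at once from $\sum_{h\,\mathrm{monic}} u^{\deg h} = 1/(1-qu)$. Since the right-hand side is a polynomial of degree one in $u$, the partial sum $\sum_{\deg h \le M} \mu(h)\, u^{\deg h}$ already equals $1 - qu$ as soon as $M \ge 1$. When $k \le n-1$ one has $n-k \ge 1$, and substituting $u = q^{-(d+1)}$ yields $1 - q^{-d}$, which, after multiplication by $q^{-k(d+1)}$, is the claimed value (note that $\deg g = n - k$ is forced by $\deg c = k$). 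The last sentence is standard: the splitting field of $c = \prod P_i^{e_i}$ over $\F_q$ is the compositum of the fields $\F_{q^{\deg P_i}}$, namely $\F_{q^L}$ with $L = \mathrm{lcm}(\deg P_1, \ldots, \deg P_m)$, whose Galois group over $\F_q$ is cyclic of order $L$. No step poses a real obstacle; the only point worth emphasizing is that truncating the Möbius series is harmless precisely because the full series is a polynomial of degree $1$ in $u$, a feature specific to polynomial rings over finite fields that has no counterpart over $\Z$.
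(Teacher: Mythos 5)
Your proof is correct and rests on the same ideas as the paper's: rewrite $f=\sum_{j=0}^d b_j(y)x^j$ so that $\con_x(f)=\gcd(b_0,\ldots,b_d)$ with $b_0$ uniform in $\F_q[y]_n^{\mon}$ and $b_1,\ldots,b_d$ uniform in $\F_q[y]_{\le n-1}$, then sieve with the M\"obius function of $\F_q[y]$, the exact evaluation coming from the vanishing of $\sum_{\deg h=l}\mu(h)$ for $l>1$. The only difference is organizational: the paper factors out $c$ and invokes Corollary \ref{cor: gcd} (coprimality probability $1-q^{-d}$) on the quotient tuple after a counting bijection, whereas you perform the M\"obius inversion directly over multiples of $c$ --- the same computation, inlined.
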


The paper is organized as follows: in Section \ref{sec:prelim} we introduce some notation and prove some results on the probability of coprimality in $\F_q[x]$. In Section \ref{sec: large box} we discuss the large box model and prove Theorem \ref{thm: large box}. In section \ref{sec: small box irred} we begin investigating the small box model and deduce Theorem \ref{thm: irreducibility} from the large box model using an exchange of variables. In section \ref{sec: small box galois} we study the Galois group in the small box model and prove Theorem \ref{thm: small box main}. In Section \ref{sec: chowla} we derive Theorem \ref{thm: chowla}. Finally, in Section \ref{sec: reducible} we extend our results to the reducible case, proving Theorems \ref{thm: reducible} and \ref{thm: reducible chowla}. 
\\ \\
\noindent {\bf Acknowledgements.} The authors would like to thank Ofir Gorodetsky for bringing to our attention some related work and Evan O'Dorney for pointing out several typos and some suggestions on the exposition. The authors would also like to thank the anonymous referee of a previous version of this paper for spotting several minor errors and for suggestions for improving the exposition. Both authors were partially supported by a grant of the Israel Science Foundation no. $2507/19$. 

\section{Preliminaries}\label{sec:prelim}
\subsection{Notaion and conventions}
The letter $q=p$ will always denote a power of a prime $p$, which will always be assumed \emph{fixed}. In particular all asymptotic notation has implicit constants and rate of convergence which may depend on $q$ as well as other parameters explicitly specified as fixed. This is just for convenience, in principle all of our results can be made uniform in $q$. 

The cardinality of a set $A$ will be denoted by both $|A|$ and $\#A$. Non-strict inclusion is denoted by $\subset$. If $A$ is an event in some given sample space we denote by $A^c$ its complement. For a polynomial ring $R[X]$ we denote
$$R[X]_{\le d}=\{h\in R[X]:\,\deg h\le d\},$$
$$R[X]_{\le d}^n=\{(h_1,\ldots h_n)\in R[X]^n:\,\deg h_i\le d\},$$
$$R[X]^{\mon}=\{h\in R[X]:\,h\mbox{ monic}\},$$
$$R[X]^{\mon}_{\le d}=\{h\in R[X]:\,h\mbox{ monic, }\deg h\le d\},$$
$$R[X]^{\mon}_{d}=\{h\in R[X]:\,h\mbox{ monic, }\deg h=d\}.$$

\subsection{Coprimality of polynomials over a finite field}

In this section we will prove some auxiliary results about the probability of polynomials in $\F_q[x]$ being coprime, which will be needed in the sequel. For the analogous results over $\Z$ see \cite{Nym72}. 

\begin{lem}\label{2.3}\label{lem: gcd} Let $n_1\geq n_2\geq\ldots\geq n_m$ be natural numbers. Let $A=A^{(1)}\times A^{(2)}\times\cdots\times A^{(m)}$, where $$A^{(i)}\in\{\F_q[x]_{n_i},\,\F_q[x]_{\le n_i-1},\,\F_q[x]_{n_i}^{\mon}\},\quad 1\le i\le m-1,$$
$$A^{(m)}\in\{\F_q[x]_{n_m},\,\F_q[x]_{n_m}^{\mon}\}.$$
Let $(a_1,\ldots,a_m)$ be chosen uniformly at random from $A$. Then 
$$\P(\gcd(a_1,\ldots,a_m)=1)=1-\frac{1}{q^{m-1}}.$$ 
\end{lem}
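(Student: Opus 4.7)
The plan is a standard Möbius inversion over the polynomial ring $\F_q[x]$. The main point is that the restriction on $A^{(m)}$ truncates the Möbius sum at degree $n_m$, and the simple form of the zeta function of $\F_q[x]$ makes the truncated sum collapse to an elementary closed expression.

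First I would verify by direct counting that for each of the three admissible types of $A^{(i)}$ and every monic $h \in \F_q[x]$ with $\deg h \le n_i$, one has $\P(h \mid a_i) = |h|^{-1} = q^{-\deg h}$. All three cases amount to the observation that the polynomials in $A^{(i)}$ divisible by $h$ are precisely $h \cdot b$ for $b$ ranging over an analogous set of degree shifted by $\deg h$: for $A^{(i)} = \F_q[x]_{n_i}^{\mon}$, $b$ is monic of degree $n_i - \deg h$; for $A^{(i)} = \F_q[x]_{n_i}$, the leading coefficient constraint is preserved; for $A^{(i)} = \F_q[x]_{\le n_i - 1}$, $b$ has degree $\le n_i - 1 - \deg h$. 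In all three cases the count is $|A^{(i)}|/q^{\deg h}$. The crucial observation is that the admitted choices for $A^{(m)}$ both force $a_m$ to have degree exactly $n_m$, so any monic $h$ dividing $a_m$ satisfies $\deg h \le n_m$.

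By independence of the $a_i$, and since $n_m = \min_i n_i$, for monic $h$ with $\deg h \le n_m$ one has $\P(h \mid a_i \text{ for all } i) = q^{-m \deg h}$, while for $\deg h > n_m$ this joint probability vanishes because $h \nmid a_m$. Applying Möbius inversion $\mathbf{1}[\gcd(a_1,\ldots,a_m)=1] = \sum_{h \text{ monic},\,h \mid \gcd} \mu(h)$ and taking expectations yields
\[
\P(\gcd(a_1,\ldots,a_m) = 1) = \sum_{\substack{h \text{ monic} \\ \deg h \le n_m}} \frac{\mu(h)}{q^{m \deg h}} = \sum_{k=0}^{n_m} \frac{1}{q^{mk}} \sum_{\substack{h \text{ monic} \\ \deg h = k}} \mu(h).
\]
The inner sum equals the coefficient of $u^k$ in $1/\zeta_{\F_q[x]}(s)\big|_{u = q^{-s}} = 1 - qu$, hence it is $1$ for $k=0$, $-q$ for $k=1$, and $0$ for $k \ge 2$. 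Thus (assuming $n_m \ge 1$, as the statement implicitly requires) the sum collapses to $1 - q \cdot q^{-m} = 1 - q^{1-m}$, giving the claimed probability.

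The only mild obstacle is the case analysis in the first step; the real content of the lemma is conceptual, namely that forbidding the option $A^{(m)} = \F_q[x]_{\le n_m - 1}$ is exactly what prevents $a_m = 0$, which would make every $h$ divide $a_m$ and cause the Möbius sum to diverge. Once this truncation is in place, the extreme thinness of $\mu$ on $\F_q[x]$ (supported only in degrees $0$ and $1$) immediately delivers the exact value, with no limit or error term required.
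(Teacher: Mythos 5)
Your proof is correct and follows essentially the same route as the paper: both compute $\P(h\mid a_i)=q^{-\deg h}$ uniformly over the three admissible coefficient sets, truncate the M\"obius sum at degree $n_m$ because $a_m$ has degree exactly $n_m$ (hence is nonzero), and collapse the sum using $\sum_{\deg h=k,\,h\ \mathrm{monic}}\mu(h)=1,-q,0$ for $k=0,1,\ge 2$. The paper phrases it as inclusion--exclusion over the complement event rather than M\"obius inversion of the indicator, but this is only a bookkeeping difference.
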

\begin{proof} 
We will use a standard sieving argument involving the M\"obius function. Recall that the M\"obius function on $\F_q[x]$ is defined by
\begin{equation}\label{eq: mobius}\mu(g)=\left[\begin{array}{ll}(-1)^k,&g=P_1\cdots P_k,\,P_i\in\F_q[x]\mbox{ distinct primes},\,k\ge 0,\\0,&g\mbox{ not squarefree}.\end{array}\right.\end{equation} It is well-known that
\begin{equation}\label{eq: M(n)} M(n):=\sum\limits_{\substack{\deg h =n \\ {\text{$h$ monic}}}}\mu(h)=\left\{\begin{array}{lll}1,&n=0,\\-q,&n=1,\\0,&n>1,\end{array}\right.\end{equation}
see \cite[\S 2]{Ros02} for details.

Consider the set
$$B=\{(a_1,\ldots,a_m)\in A: \gcd(a_1,\ldots,a_m)\ne 1\}.$$
Also, for each polynomial $f\in\mathbb{F}_q[x]$ define 
$$A_f=\{(a_1,\ldots,a_m)\in A: f|\gcd(a_1,\ldots,a_m)\}.$$ If $p_1,\ldots,p_t$ are all the monic irreducible polynomials of degree at most $n_m$ then clearly $B=\bigcup\limits_{i=1}^t A_{p_i}$, $A_{p_{i_1}}\cap A_{p_{i_1}}\cap\ldots\cap A_{p_{i_k}}=A_{p_{i_1}p_{i_2}\cdots p_{i_k}}$ ($i_1,\ldots,i_k$ distinct) and we can compute the cardinality of $B$ by inclusion-exclusion:
$$|B|=\sum\limits_{i} |A_{p_i}|-\sum\limits_{i,j} |A_{p_ip_j}|+\sum\limits_{i,j,k} |A_{p_ip_jp_k}|-\ldots+(-1)^{t+1} |A_{p_1p_2\ldots p_t}|=-\sum\limits_{\substack{\deg h\leq n_m \\ \text{$h$ monic}}}\mu(h)|A_h|$$
(note that $|A_h|=0$ if $\deg h >n_m$). Now, if $\deg h\leq n_m$ 
then $$|A_h|=\frac{|A|}{q^{m\deg h}}.$$ Hence
$$|B|=-\sum\limits_{\substack{\deg h \leq n_m \\ \text{$h$ monic}}}\mu(h)|A_h|=-\sum\limits_{k=1}^{n_m}\sum\limits_{\substack{\deg h =k \\ \text{$h$ monic}}}\mu(h)\frac{|A|}{q^{mk}}=-\sum\limits_{k=1}^{n_m}M(k)\frac{|A|}{q^{mk}}=\frac{|A|}{q^{m-1}},$$
the last equality follows from (\ref{eq: M(n)}). We obtain $\P(B)=\frac{|B|}{|A|}=\frac 1{q^{m-1}}$, which is equivalent to the assertion of the lemma.

\end{proof}

\begin{cor}\label{2.1}\label{cor: gcd} Let $n,d\ge 1$ be integers. Define the following set:
$$A=\left\{(a_0,\ldots,a_d)\in\mathbb{F}_q[x]^{d+1}:\,a_i\in\F_q[x]_{\le n-1},\,a_0\in\F_q[x]_n^{\mon}\right\},$$ 
If we pick (uniformly) a random element $(a_0,\ldots,a_d)\in A$ then $$\mathbb{P}(\gcd(a_0,\ldots,a_d)=1)=1-\frac{1}{q^{d}}.$$

\end{cor}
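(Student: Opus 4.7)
The plan is to reduce \Cref{cor: gcd} directly to the more general \Cref{lem: gcd} proved just above. First I would note that the gcd of a tuple is symmetric in its entries, so the labelling of coordinates is irrelevant and we may reindex so that the monic entry $a_0$ sits in the last coordinate. Next I would set $m=d+1$ and $n_1=n_2=\cdots=n_m=n$, for which the monotonicity $n_1\ge\cdots\ge n_m$ is trivial. For the first $m-1$ coordinates I take $A^{(i)}=\F_q[x]_{\le n-1}=\F_q[x]_{\le n_i-1}$, which is one of the three permitted shapes in \Cref{lem: gcd}; for the last coordinate I take $A^{(m)}=\F_q[x]_n^{\mon}=\F_q[x]_{n_m}^{\mon}$, which is one of the two permitted shapes for the final slot.

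With this identification, the sample space $A$ of \Cref{cor: gcd}, after the harmless permutation above, coincides with the product $A^{(1)}\times\cdots\times A^{(m)}$ appearing in \Cref{lem: gcd}. Invoking that lemma yields
$$\P\bigl(\gcd(a_0,\ldots,a_d)=1\bigr)=1-\frac{1}{q^{m-1}}=1-\frac{1}{q^d},$$
which is exactly the claimed probability.

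I do not anticipate a genuine obstacle: this really is just a specialisation of \Cref{lem: gcd}. The one point that deserves attention is the bookkeeping around the last coordinate, since \Cref{lem: gcd} does not permit the shape $\F_q[x]_{\le n_m-1}$ in the $m$-th slot. One must therefore be sure to place the monic entry $a_0$ (of exact degree $n$) in that last slot rather than one of the ``small'' entries of degree $\le n-1$. Once this matching is done, the verification is purely mechanical and the stated probability follows immediately.
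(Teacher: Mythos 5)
Your proposal is correct and is essentially the paper's own proof: the paper likewise deduces the corollary by applying Lemma \ref{lem: gcd} with $m=d+1$ and $n_1=\cdots=n_{d+1}=n$, and your only addition is the (harmless and correct) bookkeeping of permuting coordinates so the monic entry $a_0$ occupies the last slot, where the shape $\F_q[x]_{n_m}^{\mon}$ is permitted while $\F_q[x]_{\le n_m-1}$ is not.
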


\begin{proof} 
This is immediate from Lemma \ref{lem: gcd} applied with $m=d+1, n_1= n_2=n_3=\ldots=n_{d+1}=n$.
\end{proof}

\begin{proof}[Proof of Proposition \ref{prop: content}]

In the notation of Proposition \ref{prop: content} the condition $\con_x(f)=c,\deg g=n-k$ is equivalent to $f$ being of the form
$$f=c(y)\cdot\left(b_d(y)x^d+b_{d-1}(y)x^{d-1}+\ldots+b_0(y)\right),$$ $$b_i\in\F_q[y]_{\le n-k-1}\,(1\le i\le d),\,b_0\in\F_q[y]_{n-k}^{\mathrm{mon}},\,\gcd(b_0,\ldots,b_{d})=1.$$
Noting that $$\bigl|\left(\F_q[y]_{\le n-k-1}\right)^d\times\F_q[y]_{n-k}^\mon\bigr|=\frac 1{q^{k(d+1)}}\bigl|\left(\F_q[y]_{\le n-1}\right)^d\times\F_q[y]_{n}^\mon\bigr|$$
The assertion now immediately follows from Corollary \ref{cor: gcd}.
\end{proof}

\section{The large box model}\label{sec: large box}
In the present section we treat the large box model and prove Theorem \ref{1.1}. Our main tool is a quantitative form of Hilbert's Irreducibility Theorem over function fields, which was proved by Bary-Soroker and the first author in \cite{BaEn21}:

\begin{thm}[Quantitative Hilbert Irreducibility Theorem over function fields]\label{3.1}\label{thm: HIT}  Let $x,y,T_1,\ldots,T_n$ be independent variables and $F(T_1,\ldots,T_n,y)\in \F_q(x)[T_1,\ldots,T_n,y]$ an irreducible polynomial with $\deg_yF\ge 1$, separable in $y$. 
Denote $$H_F(d)=\#\{(t_1,\ldots,t_n)\in\left(\F_q[x]_{\le d}\right)^n:\,F(t_1,\ldots,t_n,y)\in K[y]\mbox{ is irreducible over } \F_q(x)\}.$$ Then:
$$\frac{H_F(d)}{q^{(d+1)n}}\to 1 \ \ \ \ \text{as} \ d\to\infty.$$
Moreover, if we denote
\begin{multline*}I_F(d)=\#\left\{(t_1,\ldots,t_n)\in\left(\F_q[x]_{\le d}\right)^n:\,F(t_1,\ldots,t_n,y)\in\F_q(x)[y]\mbox{ separable, }\right.\\ \left.\Gal(F(t_1,\ldots,t_n,y)/\F_q(x))\cong\Gal(F/\F_q(x,T_1,\ldots,T_n))\right\}.\end{multline*}
then
$$\frac{I_F(d)}{q^{(d+1)n}}\to 1 \ \ \ \ \text{as} \ d\to\infty.$$
\end{thm}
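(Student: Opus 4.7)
The plan is to reduce the theorem to counting $\F_q$-points on auxiliary varieties and to apply Weil-type bounds. First observe that the $I_F$ statement implies the $H_F$ statement: since $F$ is irreducible in the generic variables, $G := \Gal(F/\F_q(x,T_1,\ldots,T_n))$ acts transitively on the $\deg_y F$ roots, so any specialization whose Galois group is isomorphic to $G$ as a permutation group on these roots must yield an irreducible and separable polynomial. Identifying $(\F_q[x]_{\le d})^n$ with $\mathbb{A}^{(d+1)n}(\F_q)$ via the coefficient parametrization $t_i = \sum_{j=0}^d c_{i,j}x^j$, it suffices to estimate the ``bad set'' $B \subset (\F_q[x]_{\le d})^n$ of specializations where the Galois group properly drops. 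By standard Galois theory $B \subset \bigcup_{H} B_H$, the union running over proper subgroups $H < G$ up to conjugacy, where a specialization lies in $B_H$ iff the finite cover of $\F_q(x,T_1,\ldots,T_n)$ corresponding to the fixed field $K^H$ acquires a rational section over $\F_q(x)$ after specialization. Since $G$ has only finitely many subgroups, it is enough to bound each $|B_H|$ by $o(q^{(d+1)n})$.

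I would proceed by induction on $n$. For the base case $n=1$, the cover associated with $K^H$ corresponds to a geometrically irreducible affine variety over $\F_q$; the specializations $t_1 \in \F_q[x]_{\le d}$ lying in $B_H$ correspond to $\F_q[x]$-rational sections of $x$-degree at most $d$, which can be parametrized by $\F_q$-points of bounded degree on a suitable auxiliary curve. Weil's Riemann hypothesis for curves then yields an estimate of the form $|B_H| = O_F(q^{(d+1)/2} d^{O(1)})$, which is $o(q^{d+1})$. For the inductive step, I would fix $(t_2,\ldots,t_n)$ outside a thin bad locus so that $F(T_1, t_2(x),\ldots, t_n(x), y)$ remains irreducible over $\F_q(x, T_1)$ with the full generic Galois group, then invoke the $n=1$ case fiber by fiber and combine the estimates via Fubini.

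The main obstacle is the Bertini-type step: one must show that a typical partial specialization $(t_2,\ldots,t_n)$ preserves the geometric irreducibility of the relevant cover, and that the implicit constants in the Weil bound (depending on the genus and degree of the auxiliary curves) can be made uniform in $d$. This requires careful bookkeeping of the exceptional loci at each inductive stage, together with a uniform control of the geometric invariants of the varieties involved that is independent of the specializations. An alternative to the induction would be a direct Lang--Weil estimate on the $(d+1)n$-dimensional parameter space, but verifying that the locus cut out by the condition ``Galois group $\subset H$'' is a proper subvariety (and quantifying its codimension) seems to require essentially the same geometric input.
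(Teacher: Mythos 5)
The paper does not actually prove Theorem \ref{thm: HIT}: it is quoted from Bary-Soroker and Entin \cite{BaEn21} (with only the remark that the monicity normalization there is removed by the change of variables $s_i=T^{d+1}+t_i$), so there is no internal argument to compare yours with. Judged on its own, your reduction is the standard one (the deduction of the $H_F$ statement from the $I_F$ statement is essentially fine, and bad specializations are indeed those for which some quotient cover attached to a proper subgroup $H<G$ acquires a rational point), but the quantitative heart of the theorem is missing from your sketch.

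The gap is in the base case $n=1$, and it propagates through the induction. A specialization $t_1\in\F_q[x]_{\le d}$ lies in $B_H$ iff the curve $C_H$ over the \emph{global function field} $K=\F_q(x)$ has a $K$-rational point above $T_1=t_1$, i.e. iff $t_1$ is the image of a section of degree $O(d)$ of the associated surface fibered over the $x$-line. Counting such $t_1$ is a bounded-height point count over $K$; the parametrizing object is a space of sections whose dimension grows linearly in $d$, not the set of $\F_q$-points of bounded degree on any fixed auxiliary curve. Consequently Weil's Riemann hypothesis for curves over finite fields does not give your claimed bound $|B_H|=O_F\bigl(q^{(d+1)/2}d^{O(1)}\bigr)$; obtaining any bound of that shape requires genuinely new input, e.g. uniform dimension bounds for the relevant section (Hom) schemes, a sieve over the primes of $\F_q[x]$, or a Lang--Weil/explicit Chebotarev argument on the $(d+1)n$-dimensional parameter space with degree control uniform in $d$. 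The same uniformity issue recurs in your inductive step: after fixing $t_2,\ldots,t_n$ of degree up to $d$, the genus and degree of the resulting covers over $\F_q(x)$ grow with $d$, so ``Weil with constants depending only on $F$'' is exactly what has to be proved, not an available tool. You correctly identify this as ``the main obstacle,'' but you offer no mechanism to overcome it; since this uniform counting of sections is precisely the content of the cited result of \cite{BaEn21}, the proposal as written assumes the crux rather than proving it.
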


\begin{rem} In \cite{BaEn21} the above result is stated with $t_i$ required to be monic. This is equivalent to the statement above by change of variables $s_i= T^{d+1}+t_i$.\end{rem}

\begin{prop}\label{3.2} Let $K$ be a field, $T_1,\ldots,T_n$ independent variables. The polynomial $$F(y)=y^n+T_ny^{n-1}+T_{n-1}y^{n-2}+\ldots+T_2y+T_1\in K(T_1,\ldots,T_n)[y]$$ has Galois group isomorphic to $S_n$ over $K(T_1,\ldots,T_n)$. \end{prop}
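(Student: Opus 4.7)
The plan is to realize $F$ as the ``generic polynomial'' whose roots are algebraically independent and invoke Artin's theorem on fixed fields.

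First I would introduce $n$ independent indeterminates $y_1,\ldots,y_n$ over $K$ and form the polynomial
$$G(y)=\prod_{i=1}^n(y-y_i)\in K[y_1,\ldots,y_n][y].$$
Expanding, $G(y)=y^n+\sum_{j=0}^{n-1}(-1)^{n-j}e_{n-j}(y_1,\ldots,y_n)\,y^j$, where $e_k$ is the $k$-th elementary symmetric polynomial in $y_1,\ldots,y_n$.

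Next, let $L=K(y_1,\ldots,y_n)$ and let $S_n$ act on $L$ by permuting the $y_i$. This action is faithful and fixes $K$, so by Artin's theorem $L/L^{S_n}$ is Galois with group $S_n$ and $[L:L^{S_n}]=n!$. By the fundamental theorem of symmetric polynomials, $L^{S_n}=K(e_1,\ldots,e_n)$ and moreover $e_1,\ldots,e_n$ are algebraically independent over $K$.

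Now I would identify $F$ with $G$. Algebraic independence of the $e_i$ gives a $K$-algebra isomorphism $K[T_1,\ldots,T_n]\xrightarrow{\sim} K[e_1,\ldots,e_n]$ sending $T_{n+1-j}\mapsto (-1)^j e_j$; passing to fraction fields yields an isomorphism $K(T_1,\ldots,T_n)\xrightarrow{\sim}L^{S_n}$ under which $F$ corresponds to $G$. Since $L=L^{S_n}(y_1,\ldots,y_n)$ is generated over $L^{S_n}$ by the roots of $G$, it is a splitting field of $G$ over $L^{S_n}$. Therefore $\Gal(G/L^{S_n})\cong S_n$, and transporting along the isomorphism gives $\Gal(F/K(T_1,\ldots,T_n))\cong S_n$.

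There is no real obstacle here; the only nontrivial input is the fundamental theorem of symmetric polynomials, used both to identify the fixed field $L^{S_n}$ and to assert the algebraic independence of $e_1,\ldots,e_n$ that lets us match $K(T_1,\ldots,T_n)$ with $L^{S_n}$. Everything else is formal.
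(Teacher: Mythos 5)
Your proof is correct. The paper itself gives no argument for this proposition, simply citing \cite[Theorem 18.13]{Ste94}; your write-up is precisely the standard proof of that cited result (adjoin algebraically independent roots $y_1,\ldots,y_n$, apply Artin's theorem to the permutation action to get $L^{S_n}=K(e_1,\ldots,e_n)$, and transport $F$ to $\prod_i(y-y_i)$ via the sign-twisted isomorphism $T_{n+1-j}\mapsto(-1)^je_j$), with the index bookkeeping matching the coefficient convention in the statement.
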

\begin{proof} See \cite[Theorem 18.13]{Ste94}. \end{proof}

\begin{proof}[Proof of Theorem \ref{1.1}]  By proposition \ref{3.2} we know that the generic polynomial
$$F(T_1,\ldots,T_n,y)=y^n+T_ny^{n-1}+\ldots+T_2y+T_1\in \F_q(x,T_1,\ldots,T_n)[y]$$
has Galois group $S_n$ over $\F_q(x,T_1,\ldots,T_n)$. For $d\in\mathbb{N}$ let 
$I_F(d)$ be the number of tuples\\ $(a_0(x), a_1(x),\ldots,a_{n-1}
(x))\in\left(\F_q[x]_{\le d}\right)^n$ such that 
\begin{equation}\label{eq: proof large box}F(a_0(x), a_1(x),\ldots,a_{n-1}(x), 
y)=y^n+a_{n-1}(x)y^{n-1}+\ldots+a_1(x)y+a_0(x)\in\F_q(x)[y]\end{equation} has 
Galois group $S_n$ over $\mathbb{F}_q(x)$. Since the Galois group of 
$F(T_1,\ldots,T_n,y)$ over $K(T_1,\ldots,T_n)$ is $S_n$, Theorem \ref{3.1} tells us 
that $\frac{I_F(d)}{q^{(d+1)n}}\to 1$ as $d\to\infty$. Hence with limit 
probability 1 as $d\to\infty$, the polynomial on the RHS of (\ref{eq: proof large 
box}) has Galois group $S_n$ and in particular is irreducible (since $S_n$ is 
transitive). This completes the proof of \Cref{thm: large box}.\end{proof}

\section{The small box model - irreducibility}
\label{sec: small box irred}

For the rest of the paper we focus on the small box model, which is more challenging (as in the classical setting over $\Z$). In the present section we deal with the question of irreducibility and prove \Cref{thm: irreducibility}. It turns out that in our function field setting this question is relatively easy and the main tool will once again be Hilbert's Irreducibility Theorem.

First, let us recall the small box model. We consider a random polynomial
\begin{equation}\label{eq: rand pol 1}f(x,y)=y^n+a_{n-1}(x)y^{n-1}+\ldots+a_1(x)y+a_0(x)\in\mathbb{F}_q[x][y],\end{equation}
where $a_0, a_1,\ldots,a_{n-1}\in\mathbb{F}_q[x]_{\le d}$ are drawn uniformly and independently at random. Throughout this section $q,d\ge 1$ are fixed and $n$ will be taken to infinity.

First of all we sketch the idea of the proof of Theorem \ref{1.2}. For the random polynomial (\ref{eq: rand pol 1}), for each $0\leq i\leq n-1$ write $a_i(x)=\sum\limits_{j=0}^d a_{ij}x^j$. Then
$$f=y^n+\sum\limits_{i=0}^{n-1}\left(\sum\limits_{j=0}^d a_{ij}x^j\right)y^i=\sum\limits_{j=1}^d\left(\sum\limits_{i=0}^{n-1}a_{ij}y^i\right)x^j+\left(y^n+a_{n-1, 0}y^{n-1}+\ldots+a_{10}y+a_{00}\right).$$
So now we regard $f$ as a polynomial in the variable $x$ over $\mathbb{F}_q[y]$. Note that by Gauss's lemma, $f$ is irreducible over $\F_q(x)$ iff it is irreducible as an element of $\F_q[x,y]$, which is equivalent to $\con_x(f)=1$ and $f$ being irreducible over $\F_q(y)$. As a polynomial in $x$, $f$ has degree $\le d$ (so essentially fixed degree) with coefficients of degree $\le n$, which is a variation of the large box model and can be handled similarly to the proof of \Cref{thm: large box}. We now fill in the details.

Given an integer $n\geq 1$ we pick elements $a_0(x), a_1(x),\ldots,a_{n-1}(x)\in\mathbb{F}_q[x]_{\le d}$ uniformly at random. For such a vector $a=(a_0(x), a_1(x),\ldots,a_{n-1}(x))\in\mathbb{F}_q[x]_{\le d}^n$ denote by
$$f_a(x,y)=y^n+a_{n-1}(x)y^{n-1}+\ldots+a_1(x)y+a_0(x)\in\mathbb{F}_q[x,y]$$ the corresponding random polynomial and consider the event:
$$A_n=\left\{a\in\mathbb{F}_q[x]_{\le d}^n: f_a(x,y)\in\F_q[x,y] \ \text{is irreducible}\right\}.$$
We are interested in $\lim\limits_{n\to\infty}\mathbb{P}(A_n)$. As indicated in the sketch above, we now switch between the variables $x$ and $y$. Write $a_i(x)=\sum\limits_{j=0}^d a_{ij}x^j$ and rewrite $f_a$ as
$$f_a= y^n+\sum\limits_{i=0}^{n-1}\left(\sum\limits_{j=0}^d a_{ij}x^j\right)y^i=\sum\limits_{j=1}^d\left(\sum\limits_{i=0}^{n-1}a_{ij}y^i\right)x^j+(y^n+a_{n-1, 0}y^{n-1}+\ldots+a_{10}y+a_{00}).$$

First, we deal with the case where $f_a$ is a constant polynomial in the variable $x$. This happens exactly when $a_0, a_1,\ldots,a_{n-1}$ are all constant polynomials. The probability of this happening is $\frac{q^n}{q^{(d+1)n}}$, which tends to $0$ as $n\to\infty$. Hence we can ignore this occurence in what follows.
Given that $f_a$ is not constant in the variable $x$, Gauss's Lemma tells us that it is irreducible in $\mathbb{F}_q[x][y]$ if and only if $\con_x(f)=1$ and $f_a$ is irreducible in $\mathbb{F}_q(y)[x]$. Hence it remains to compute
$$\lim_{n\to\infty}\P(\con_x(f)=1,\,f_a\in\F_q(y)[x]\mbox{ irreducible}).$$

First we compute the limit probability of the condition $\con_xf=1$. Denote
$$b_0=y^n+\sum_{i=0}^{n-1}a_{i0},\quad b_j=\sum_{i=0}^{n-1}a_{ij}y^i\,\,(1\le j\le d),$$ so that $f=\sum_{j=0}^db_jx^j,\,b_j\in\F_q[y]$. Note that
$b_0$ is a random element of $\mathbb{F}_q[y]^\mon_n$, and $b_j$ is a random element of $\F_q[y]_{\le n-1}$ for $1\le j\le d$ (with $b_0\ldots,b_d$ uniform and independent).
Now we can apply Proposition \ref{2.1} to the tuple $(b_0,\ldots,b_d)$ to conclude that 
\begin{equation}\label{eq: prob primitive}\lim_{n\to\infty}\P(\con_x(f)=1)=\lim_{n\to\infty}\P(\gcd(b_0,\ldots,b_d)=1)=1-\frac 1{q^d}.\end{equation}

Next we apply \Cref{thm: HIT} with the variables $x$ and $y$ switched to the (irreducible) polynomial $$F(y,T_0,T_1,\ldots,T_d,x)=(T_0+y^n)+T_1x+T_2x^2+\ldots+T_dx^d.$$
We obtain
\begin{multline}\label{eq: prof f irred hilbert}\lim_{n\to\infty}\P(f\mbox{ irreducible over }\F_q(y))=
\lim_{n\to\infty}\P(F(y,b_0-y^n,b_1,\ldots,b_d,x)\mbox{ irreducible over }\F_q(y))\\=
\lim_{n\to\infty}\frac{H_F(n-1)}{q^{dn}}=1.\end{multline}
Combining (\ref{eq: prob primitive}) and (\ref{eq: prof f irred hilbert}) we obtain
$$\lim_{n\to\infty}\P(\con_x(f)=1,\,f_a\in\F_q(y)[x]\mbox{ irreducible})=1-\frac 1{q^d}.$$
Combined with the observations above this proves the assertion of \Cref{thm: irreducibility}.

\begin{rem}\label{rem: content} It is seen from the proof above that the main source of reducible $f$ is the possibility of $\con_x(f)\neq 1$, which occurs with limit probability $\frac 1{q^d}$. Over $\Z$ if we consider a small box model $f=x^n+a_1x^{n-1}+\ldots+a_0,\,a_i\in S$ where $S$ is a fixed finite set, it appears (and can be proved conditional on the Extended Riemann Hypothesis or unconditionally for certain $S$ using the methods of \cite{BrVa19} and \cite{BKK23} respectively) that the only source of irreducible polynomials with positive limit probability is when $0\in S$ and $a_0=0$ (in which case $f$ has 0 as a root). While in our function field model we also allow $a_0=0$, this only occurs with limit probability $\frac{1}{q^{d+1}}$ and doesn't account for the bulk of reducible $f$. This is purely a function field phenomenon, related to the non-archimedean nature of function fields. A similar phenomenon was observed in the additive model studied in \cite{BEM24}.\end{rem}

\section{Small box model - the Galois group}
\label{sec: small box galois}

In the present section we continue our investigation of the small box model, this time focusing on the Galois group of $f$ and for now restricting our attention to the case of irreducible $f$. To this end we will employ the method of \cite{BaKo20}. We retain the setup and notation of \Cref{sec: small box irred}. In particular we fix a prime power $q$ and an integer $d\ge 1$, let $n$ vary and consider a random polynomial $f$ defined by (\ref{eq: rand pol 1}) with $a_i\in\F_q[x]_{\le d}$ drawn uniformly and independently. First we deal with the question of separability (as a polynomial in $y$).

\begin{lem}\label{5.1}\label{lem: irred and sep} Let $f$ be a random polynomial as above. Then
$$\lim\limits_{n\to\infty}\mathbb{P}(\text{$f$ separable over }\F_q(x) \ | \ \text{$f$ irreducible})=1.$$
Equivalently (by \Cref{thm: irreducibility}) \begin{equation}\label{eq: prob irred sep}\lim\limits_{n\to\infty}\mathbb{P}(\text{$f$ irreducible and separable over $\mathbb{F}_q(x)$})=1-\frac{1}{q^d}.\end{equation}
\end{lem}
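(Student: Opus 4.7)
The plan is to reduce the lemma to Theorem \ref{thm: irreducibility} by showing that a random irreducible $f$ fails to be separable only with negligible probability. Since Theorem \ref{thm: irreducibility} gives $\P(f\text{ irreducible})\to 1-q^{-d}>0$, it suffices to prove that $\P(f\text{ irreducible and inseparable})\to 0$; the conditional statement and the equivalent form \eqref{eq: prob irred sep} will then follow by a one-line division.

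The key algebraic fact I will invoke is the classical separability criterion in characteristic $p$: an irreducible polynomial $f\in\F_q(x)[y]$ is inseparable if and only if its formal $y$-derivative $f_y'$ vanishes identically, equivalently $f\in\F_q(x)[y^p]$. Specializing this to our random $f$: since the leading coefficient (that of $y^n$) is $1$, the vanishing of $f_y'$ forces $p\mid n$ together with $a_i(x)=0$ for every index $i\in\{1,\ldots,n-1\}$ with $p\nmid i$.

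From here I would conclude with a direct counting bound. If $p\nmid n$ then no irreducible $f$ can be inseparable and the contribution is zero. If $p\mid n$, then the set $\{i\in\{1,\ldots,n-1\}:p\nmid i\}$ has cardinality $n(1-1/p)$, so requiring those $a_i$ to vanish imposes $n(1-1/p)$ independent events each of probability $q^{-(d+1)}$ (as each $a_i$ is uniform on $\F_q[x]_{\le d}$, a set of size $q^{d+1}$). Hence
\[
\P(f\text{ irreducible and inseparable})\le \P(f_y'=0)\le q^{-(d+1)n(1-1/p)},
\]
which decays exponentially in $n$. No substantial obstacle is anticipated: the whole argument is a short combinatorial bound once the separability criterion is invoked. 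The robustness of the conclusion comes from the observation that inseparability pins down a positive-density subset of the independent coefficients, which is exponentially costly.
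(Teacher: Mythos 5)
Your proposal is correct and follows essentially the same route as the paper: both reduce to the criterion that an irreducible inseparable $f$ must lie in $\F_q[x][y^p]$, forcing $a_i=0$ for all $i$ not divisible by $p$, which happens with vanishing probability as $n\to\infty$. Your explicit exponential bound $q^{-(d+1)n(1-1/p)}$ and the case distinction on $p\mid n$ simply make quantitative what the paper dismisses as clear.
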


\begin{proof} If $f$ is irreducible and inseparable then $f\in\F_q[x][y^p]$ and therefore $a_i=0$ for all $0\le i\le n-1$ not divisible by $p$. Clearly this happens with limit probability 0 as $n\to\infty$, which implies (\ref{eq: prob irred sep}) and thus the assertion.
\end{proof}




Next, we need to recall some facts about permutations. Here we will follow \cite{BaKo20}. As it turns out, the cycle structure of a random permutation in $S_n$ is similar to the decomposition of a random monic polynomial of degree $n$ in $\mathbb{F}_q[x]$ when $n$ is large. We wish to state this precisely.

\begin{notation}\label{not: X and Y} Let $\mathfrak{F}$ be the set of all infinite sequences $(m_1, m_2,\ldots)$ of non negative integers which are eventually zero, i.e $m_i=0$ for all sufficiently large $i$. For each $n\in\mathbb{N}$ and each such sequence let $X_n(m_1, m_2,\ldots)$ be the probability of a random monic polynomial of degree $n$ in $\mathbb{F}_q[x]$ having exactly $m_i$ prime factors of degree $i$ in its prime factorization over $\mathbb{F}_q$, for all $i\geq 1$. Similarly, let $Y_n(m_1, m_2,\ldots)$ be the probability of a random permutation in $S_n$ having exactly $m_i$ cycles of length $i$ in its cycle decomposition, for all $i\geq 1$. The functions $X_n$ and $Y_n$ induce probability measures on $\mathfrak F$ which by abuse of notation we also denote $X_n,Y_n$.
\end{notation}

Clearly, if $\mathbb{P}_{X_n}(m_1, m_2,\ldots)>0$ or $\mathbb{P}_{Y_n}(m_1, m_2,\ldots)>0$ then we have $m_i=0$ for all $i>n$. 

\begin{notation} Let $r\geq 1$, and let 
$\mathfrak{F}_r$ be the set of all infinite sequences 
$(m_r, m_{r+1},\ldots)$ of non negative integers which 
are eventually zero. In order to avoid confusion with 
the indices, we formally regard such a sequence as a 
function $\{r, r+1,\ldots\}\to\mathbb{Z}_{\geq 0}$ with 
finite support. Now we can push forward $X_n$ (resp. $Y_n$) to 
$\mathfrak{F}_r$ through restriction of sequences from $\mathbb N$ to $\{r,r+1,\ldots\}$: we define $X_{n,r}(m_r,
m_{r+1},\ldots)$ (resp. $Y_{n,r}(m_r,
m_{r+1},\ldots)$) to be the probability of a random monic 
polynomial of degree $n$ in $\mathbb{F}_q[x]$ (resp. a random permutation in $S_n$) having 
exactly $m_i$ factors of degree $i$ in its prime factorization over $\mathbb{F}_q$ (resp. $m_i$ cycles of length $i$), for all $i\geq r$. 
These functions 
induce probability measures on the set 
$\mathfrak{F}_r$, we denote them also by 
$X_{n,r}$ and $Y_{n,r}$ 
respectively (they are the push-forward of $X_n,Y_n$ respectively).
\end{notation}

We recall the following result from \cite{BaKo20}, which shows that the two probability measures $X_{n,r},Y_{n,r}$ are close to each other when $n$ and $r$ are large:
\begin{lem}[{\cite[Lemma 6]{BaKo20}}]\label{5.3} There exists a constant $C_q>0$ (depending only on $q$) such that for all $1\leq r\leq n$ and for all events $A\subset\mathfrak{F}_r$ the following inequality holds:
$$|\mathbb{P}_{X_{n,r}}(A)-\mathbb{P}_{Y_{n,r}}(A)|\leq\frac{C_q}{r}.$$
\end{lem}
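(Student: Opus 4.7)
The plan is to prove Lemma~\ref{5.3} by the conditioned-product-measure method used by Bary-Soroker and Kozma \cite{BaKo20}, adapted to the function field side. The starting point is a pair of probabilistic representations coming from the generating function identities
\begin{equation*}
\prod_{i\ge 1}\exp(t^i/i)=\frac{1}{1-t},\qquad \prod_{i\ge 1}(1-t^i)^{-I_i}=\frac{1}{1-qt},
\end{equation*}
where $I_i$ denotes the number of monic irreducibles of degree $i$ in $\F_q[x]$. Fixing a tilting parameter $t\in(0,1)$ (respectively $z\in(0,1/q)$), these identities show that $Y_n$ is the law of $(Z_i)_{i\ge 1}$ conditioned on $\sum_i iZ_i=n$, where $Z_i\sim\mathrm{Poisson}(t^i/i)$ are independent, and that $X_n$ is the law of $(N_i)_{i\ge 1}$ conditioned on $\sum_i iN_i=n$, where $N_i$ are independent negative binomials with $\mathbb{P}(N_i=k)\propto\binom{I_i+k-1}{k}z^{ik}$. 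The conditional distributions do not depend on the choice of $t,z$; one picks them at the saddle point $t/(1-t)=qz/(1-qz)=n$, i.e.\ $t,qz\approx 1-1/n$.

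The next step is to apply the triangle inequality
\begin{equation*}
\|X_{n,r}-Y_{n,r}\|_{\mathrm{TV}}\le \|X_{n,r}-\mu_{n,r}\|_{\mathrm{TV}}+\|Y_{n,r}-\mu_{n,r}\|_{\mathrm{TV}},
\end{equation*}
where $\mu_{n,r}$ is a convenient common reference law on $\mathfrak{F}_r$, for example the marginal of $(Z_i)_{i\ge r}$ under the Poisson model. Each right-hand term decomposes into a marginal-discrepancy contribution (how close $N_i$ is to $Z_i$ individually) and a conditioning contribution. The marginal comparison is elementary: with the choice of $z$ above, the prime polynomial theorem $I_i=q^i/i+O(q^{i/2}/i)$ and a Le~Cam-type Poisson approximation to the negative binomial together give $\|\mathrm{Law}(N_i)-\mathrm{Law}(Z_i)\|_{\mathrm{TV}}=O(q^{-i/2}/i)$, and summing over $i\ge r$ yields a negligible $O(q^{-r/2}/r)$ term.

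The main obstacle, and the technical heart of the argument, is the conditioning step. Writing $S_N=\sum_{i<r}iN_i$ and $T_N=\sum_{i\ge r}iN_i$ (and analogously for the Poisson variables), the independence of $(N_i)_{i\ge r}$ from $S_N$ gives
\begin{equation*}
X_{n,r}(m)=\mathbb{P}((N_i)_{i\ge r}=m)\cdot\frac{\mathbb{P}(S_N=n-|m|)}{\mathbb{P}(S_N+T_N=n)},\qquad|m|:=\sum_{i\ge r}im_i,
\end{equation*}
and analogously for $Y_{n,r}$. The core task is then to show that the conditioning weights $\mathbb{P}(S_N=n-|m|)$ and $\mathbb{P}(S_Z=n-|m|)$ are each well approximated by the same slowly varying function of $|m|$, up to multiplicative error $O(1/r)$, uniformly in $|m|$ over the range carrying most of the mass. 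This requires a local limit theorem with explicit error terms for the integer-valued lattice sums $S_N,S_Z$ whose variances are of order $\sum_{i<r}\mathrm{Var}(iN_i)\asymp r^2$, so that the local densities at individual integers are of order $1/r$, matching the claimed bound. To extend the estimate to arbitrary events $A\subset\mathfrak{F}_r$ one truncates to tails with $|m|$ in the bulk (the complementary mass being $O(1/r)$ under both measures by Chebyshev's inequality) and applies the local limit comparison on the truncated part; combining the three error sources then yields the desired $C_q/r$ bound.
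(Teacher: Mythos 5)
First, a point of comparison: the paper does not prove this lemma at all --- it is quoted verbatim from Bary-Soroker and Kozma \cite{BaKo20} (their Lemma 6), so within this paper the ``proof'' is the citation, and your attempt has to be judged as a free-standing argument. Your setup is the standard Arratia--Barbour--Tavar\'e conditioning relation, and that part is correct: $Y_n$ is the law of independent $Z_i\sim\mathrm{Poisson}(t^i/i)$ conditioned on $\sum_i iZ_i=n$, $X_n$ is the law of independent negative binomials attached to the irreducible counts $I_i$ conditioned on $\sum_i iN_i=n$, the conditioned laws are independent of the tilting parameters, and the marginal comparison via $I_i=q^i/i+O(q^{i/2}/i)$ is fine.

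There are, however, two genuine gaps. (a) The triangle inequality through $\mu_{n,r}$, the \emph{unconditioned} marginal of $(Z_i)_{i\ge r}$, cannot give the claimed strength: the conditioning $\sum_i iZ_i=n$ drastically alters the law of the large components (only the small ones are insensitive to it), and $\|Y_{n,r}-\mu_{n,r}\|_{\mathrm{TV}}$ is of constant order, not $O(1/r)$. Concretely, with $t\approx 1-1/n$ the event $A=\{m:\sum_{i\ge r}im_i>n\}$ satisfies $\P_{Y_{n,r}}(A)=0$, while $\mu_{n,r}(A)$ is bounded below by a positive constant, because $\sum_{i\ge r}iZ_i$ has mean about $n-r$ and standard deviation of order $n$. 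So the reference-measure step must be dropped and the comparison must go through the conditioning weights, as in your second display. (b) Once you are at that display, the entire content of the lemma is the ``core task'' you state but do not carry out: showing that, after integrating against the (already matched) product prefactors, the weights $\P(S_N=n-|m|)/\P(S_N+T_N=n)$ and $\P(S_Z=n-|m|)/\P(S_Z+T_Z=n)$ differ by a total of $O(1/r)$. Since these local weights are themselves of size comparable to $1/r$, this amounts to a local limit theorem for the lattice sums $S_N,S_Z$ (variance of order $r^2$) with \emph{relative} error $O(1/r)$, uniformly over a range of $n-|m|$ wide enough that the excluded tail has probability $O(1/r)$; note that Chebyshev, as you invoke it, only gives a constant tail bound at distance $O(r)$ from the mean, so the uniformity is needed well into the moderate-deviation range (or one needs exponential tail bounds together with Edgeworth-type corrections, and a matching of the correction terms of the two models). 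This is exactly where the constant $C_q/r$ comes from, and as written it is assumed rather than proved; the argument in \cite{BaKo20} obtains the bound by a more direct comparison of the two explicit probabilities rather than by an abstract local limit theorem of this strength. As it stands, your text is a plausible plan, not a proof.
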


\begin{notation} Let $\sigma\in S_n$ be a permutation, $1\leq k\leq n$ an integer. We denote by $\Psi(\sigma, k)$ is the set of all permutations in $S_n$ which can be obtained by changing the value of $\sigma$ in elements which belong to cycles of length at most $k$ in the cycle decomposition of $\sigma$. 
\end{notation}

\begin{lem}[{\cite[Lemma 9]{BaKo20}}]\label{5.4} \label{lem: change small cycles} There exists an absolute constant $\alpha>0$ such that the following 
holds: if $\sigma\in S_n$ is a random permutation then the probability that 
there exists a transitive subgroup $G\leqslant S_n$ which does not contain $A_n$
and such that $G\cap\Psi(\sigma, \floor{n^{\alpha}})\ne\emptyset$ tends to 
$0$ as $n\to\infty$. 
\end{lem}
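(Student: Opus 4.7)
The plan is to exploit the rigidity of $\Psi(\sigma,k)$: every $\tau\in\Psi(\sigma,k)$ agrees with $\sigma$ outside the short-cycle support of $\sigma$, and hence inherits all long cycles of $\sigma$. Combined with the classical minimal-degree lower bound for primitive subgroups of $S_n$ not containing $A_n$, and with a separate combinatorial argument for imprimitive groups, this will suffice.

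First I would fix a small $\alpha>0$ (to be finalised at the end; $\alpha<1/6$ will be what the primitive case forces) and set $k=\lfloor n^\alpha\rfloor$. Let $N_\sigma$ denote the total number of points lying in cycles of $\sigma$ of length $\le k$, and let $S\subset\{1,\ldots,n\}$ be the corresponding short-cycle support. Standard cycle-count estimates for a uniform random permutation give $\mathbb{E}[N_\sigma]=k$ with variance $O(k)$, so Chebyshev yields $\mathbb{P}(N_\sigma>n^{3\alpha})=o(1)$. We restrict to the high-probability event $\{N_\sigma\le n^{3\alpha}\}$ in everything that follows.

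The key structural observation is that for any $\tau_1,\tau_2\in\Psi(\sigma,k)$ the permutation $\tau_1\tau_2^{-1}$ is supported in $S$, hence moves at most $N_\sigma\le n^{3\alpha}$ points. For a primitive $G\leqslant S_n$ with $A_n\not\leqslant G$, Babai's theorem provides a minimal-degree lower bound of order $\sqrt n$, which exceeds $n^{3\alpha}$ once $\alpha<1/6$. Therefore $|\Psi(\sigma,k)\cap G|\le 1$ for every such $G$, and $\mathbb{P}(\Psi(\sigma,k)\cap G\ne\emptyset)\le\sum_{\tau\in G}\mathbb{P}(\tau\in\Psi(\sigma,k))$. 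Using that $\tau\in\Psi(\sigma,k)$ is equivalent to $\sigma\tau^{-1}$ being supported in the short-cycle part of $\sigma$, one controls each individual term by the probability that a uniform random permutation has support $\le n^{3\alpha}$, and then sums over all primitive bad $G$ by appealing to the \L uczak--Pyber estimate $\sum_G|G|/n!=O(n^{-\delta})$.

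The imprimitive case is handled separately: a transitive imprimitive $G$ preserves a non-trivial partition into blocks of size $a$, $1<a<n$, and every cycle length of any $\tau\in G$ is of the form $s\cdot d$ with $d\mid a$ and $s$ the length of the induced orbit on the set of blocks. Since $\tau\in\Psi(\sigma,k)\cap G$ inherits all long cycles of $\sigma$, the long-cycle-length multiset of $\sigma$ must be simultaneously compatible with this decomposition for some fixed $a$. Using the Poisson-like joint distribution of long cycles of a uniform random permutation (in particular the concentration of the top few cycle lengths near $c_i n$), one shows that with probability $1-o(1)$ no non-trivial divisor $a\mid n$ is compatible, and a union bound over $a\mid n$ finishes this case. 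The main obstacle is the delicate bookkeeping in the primitive case: the loss factor $|\Psi(\sigma,k)|\le(n^{3\alpha})!$ incurred by passing from a single $\tau$ to all of $\Psi(\sigma,k)$ must be absorbed by the savings in the \L uczak--Pyber bound, which is precisely what dictates the admissible range of $\alpha$.
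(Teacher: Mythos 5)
The paper does not prove this lemma at all: it is quoted verbatim as \cite[Lemma 9]{BaKo20}, so your task was in effect to reprove Bary-Soroker--Kozma's result, and your sketch does not yet do so. The decisive gap is in the primitive case. The ``\L uczak--Pyber estimate'' you invoke, $\sum_G |G|/n!=O(n^{-\delta})$ with the sum over primitive (or transitive) subgroups $G$ not containing $A_n$, is false: a maximal such subgroup is self-normalizing, so a \emph{single} conjugacy class already contributes $\frac{n!}{|N_{S_n}(G)|}\cdot\frac{|G|}{n!}=\frac{|G|}{|N_{S_n}(G)|}=1$ to that sum (e.g.\ the class of $S_{n/2}\wr S_2$, or of $\mathrm{PGL}_2(p)\leqslant S_{p+1}$). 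What \L uczak--Pyber actually bound is the density of the \emph{union} $\bigcup_G G$, not the sum of the densities $|G|/n!$. Moreover, even granting a polynomial saving of that kind, your own accounting shows the modification step costs a factor of order $(n^{3\alpha})!$ (equivalently, $\mathbb{P}(\tau\in\Psi(\sigma,k),\,N_\sigma\le n^{3\alpha})\approx n^{n^{3\alpha}}/n!$ per $\tau$, summed over $|G|$ elements and over roughly $n!/|N_{S_n}(G)|$ conjugates per class); this loss is superpolynomial and cannot be absorbed by any polynomial-in-$n$ saving, so no choice of $\alpha>0$ rescues the union bound over individual elements and individual subgroups. The correct observation that $|\Psi(\sigma,k)\cap G|\le 1$ (via Babai's minimal-degree bound, valid for $\alpha<1/6$) removes overcounting \emph{within} one $G$ but does nothing about the astronomically many conjugates, which is where the argument collapses. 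Any workable proof has to argue at the level of the long-cycle type of $\sigma$ (the event depends only on it), which is how the actual proof in \cite{BaKo20} proceeds, using the structural analysis of \L uczak--Pyber together with Jordan/Babai-type theorems rather than an element-by-element union bound.

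The imprimitive case is also not established. Your structural claim is inaccurate: if $\tau$ preserves a partition into blocks of size $a$, a cycle of length $\ell$ projects to a block-cycle of length $s\mid\ell$ and satisfies $\ell=s\cdot t$ with $t\le a$, but $t$ need not divide $a$; the real constraints are joint ones (the $s$-values form the cycle type of the induced permutation on $n/a$ blocks, and the residual lengths over a fixed block-cycle must sum to $a$). And the input you cite --- ``concentration of the top few cycle lengths near $c_in$'' --- is false: the normalized long cycle lengths converge to a non-degenerate Poisson--Dirichlet law, not to constants. Showing that, with probability $1-o(1)$, the long-cycle data of $\sigma$ is incompatible with \emph{every} divisor $a\mid n$ even after arbitrary modification on the $\le n^{3\alpha}$ short-cycle points is precisely the delicate counting heart of \cite{BaKo20}; asserting it is not enough. (Minor: $\mathrm{Var}(N_\sigma)$ is of order $k^2$, not $k$, though Chebyshev still gives $\mathbb{P}(N_\sigma>n^{3\alpha})=o(1)$.)
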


Intuitively, this lemma says that if $n$ is large then given a random 
permutation $\sigma\in S_n$, almost surely there are no transitive subgroups 
other than $A_n$ and $S_n$ which contain elements from $\Psi(\sigma, 
\floor{n^{\alpha}})$. Thus elements of transitive groups not containing $A_n$ have special cycle structures which tend to differ significantly from those of random permutations. In what follows $\alpha$ will be the constant from Lemma \ref{lem: change small cycles}.

We denote the event described in \Cref{lem: change small cycles} by
$$F_n=\{\sigma\in S_n: \exists\ \text{transitive subgroup $A_n\not\subset G\leqslant S_n$ such that $G\cap\Psi(\sigma, 
\floor{n^{\alpha}})\neq\emptyset$}\},$$
so $\sigma\in F_n$ means that the long cycles of $\sigma$ coincide with an element of a transitive permutation group not containing $A_n$.
Let $E_n\subseteq\mathfrak{F}$ be the set of all sequences $m=(m_1, m_2,\ldots)$ such that there exists a transitive subgroup $A_n\subsetneq G\leqslant S_n$ and an element $g\in G$ which has exactly $m_i$ cycles of length $i$ for all $i\geq\floor{n^{\alpha}}+1$. Since $E_n$ consists of cycle structures of elements of $F_n$ (not necessarily all of them), we have $\P_{Y_n}(E_n)\le\frac{|F_n|}{n!}$. Thus by Lemma \ref{5.4} this probability tends to $0$ as $n\to\infty$. 



Similarly, if we let $r=\floor{n^{\alpha}}+1$ then we can denote by $E_{n,r}\subseteq\mathfrak{F}_r$ the set of all sequences $m=(m_r, m_{r+1},\ldots)$ such that there exists a transitive subgroup $A_n\not\subset G\leqslant S_n$ and an element $g\in G$ which has exactly $m_i$ cycles of length $i$ for all $i\geq r=\floor{n^{\alpha}}+1$. 
The question of whether a permutation $\sigma\in S_n$ has cycle structure in $E_n$ clearly does not depend on the cycles of $\sigma$ of length at most $\floor{n^{\alpha}}$, so we have
$$\mathbb{P}_{Y_{n,r}}(E_{n,r})=\mathbb{P}_{Y_n}(E_n).$$

Everything we have done with permutations can be also done with polynomials. 
As in the case of permutations above, if we let $r=\floor{n^{\alpha}}+1$  we have
$$\mathbb{P}_{X_n}(E_{n,r})=\mathbb{P}_{X_n}(E_n).$$
Now we can apply Lemma \ref{5.3} to obtain
$$|\mathbb{P}_{Y_n}(E_n)-\mathbb{P}_{X_n}(E_n)|=|\mathbb{P}_{Y_{n,r}}(E_{n,r})-\mathbb{P}_{X_{n,r}}(E_{n,r})|\leq\frac{C_q}{r}=\frac{C_q}{\floor{n^{\alpha}}+1}.$$
The right hand side clearly tends to $0$ as $n\to\infty$. Since we have seen that $\mathbb{P}_{Y_n}(E_n)\to 0$ as $n\to\infty$, it now follows that $\mathbb{P}_{X_n}(E_n)\to 0$ as well. We obtain the following
\begin{cor}\label{5.5} Let $\alpha$ be the constant from Lemma \ref{5.4}. Let $E_n\subseteq\mathfrak{F}$ be the subset of all sequences $m=(m_1, m_2,\ldots)\in\mathfrak{F}$ for which there exists a transitive subgroup $A_n\not\subset G\leqslant S_n$ and an element $g\in G$ which has exactly $m_i$ cycles of length $i$ for all $i\geq\floor{n^{\alpha}}+1$. Then $\lim\limits_{n\to\infty}\mathbb{P}_{X_n}(E_n)=0$. 
\end{cor}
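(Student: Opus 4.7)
The plan is to reduce the polynomial-side estimate to the permutation-side estimate via Lemma \ref{5.3}, exploiting the key observation that membership in $E_n$ depends only on the tail of the cycle-length/factor-degree sequence. More precisely, with $r=\floor{n^\alpha}+1$, the very definition of $E_n$ only constrains $m_i$ for $i\ge r$, so $E_n$ is the preimage under the truncation map $\mathfrak{F}\to\mathfrak{F}_r$ of a subset $E_{n,r}\subseteq\mathfrak{F}_r$. This immediately yields the identities
$$\mathbb{P}_{X_n}(E_n)=\mathbb{P}_{X_{n,r}}(E_{n,r}),\qquad \mathbb{P}_{Y_n}(E_n)=\mathbb{P}_{Y_{n,r}}(E_{n,r}).$$

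Next I would bound the permutation side. Given $\sigma\in S_n$ whose cycle structure lies in $E_n$, by definition there is a transitive $A_n\not\subset G\leqslant S_n$ and some $g\in G$ agreeing with $\sigma$ on the number of cycles of each length $\ge r$. One can then assemble a permutation $\sigma'\in G$ from the long cycles of $g$ (re-labelled to coincide with the support of the long cycles of $\sigma$) together with any completion to a permutation of $\{1,\ldots,n\}$; this $\sigma'$ differs from $\sigma$ only on elements lying in short cycles of $\sigma$, so $\sigma'\in\Psi(\sigma,\floor{n^\alpha})\cap G$. Hence $\sigma\in F_n$, which by Lemma \ref{5.4} occurs with probability $o(1)$. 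Therefore $\mathbb{P}_{Y_n}(E_n)\to 0$ as $n\to\infty$.

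Finally, I apply Lemma \ref{5.3} with the event $E_{n,r}\subseteq\mathfrak{F}_r$ to get
$$\bigl|\mathbb{P}_{X_n}(E_n)-\mathbb{P}_{Y_n}(E_n)\bigr|=\bigl|\mathbb{P}_{X_{n,r}}(E_{n,r})-\mathbb{P}_{Y_{n,r}}(E_{n,r})\bigr|\le\frac{C_q}{\floor{n^\alpha}+1}\xrightarrow[n\to\infty]{}0,$$
and combining this with the permutation-side bound concludes $\mathbb{P}_{X_n}(E_n)\to 0$.

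The only non-bookkeeping point is the first step: recognizing that $E_n$ genuinely factors through the truncation to $\mathfrak{F}_r$, so that Lemma \ref{5.3} is applicable. This is built into the way $E_n$ was defined, so there is no real obstacle; the corollary should be viewed as a routine transfer, combining Lemma \ref{5.4} (which controls the permutation model in the regime of long cycles) with Lemma \ref{5.3} (which says the polynomial model matches the permutation model on tail events up to an error of order $1/r$).
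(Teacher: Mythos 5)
Your proof is correct and follows essentially the same route as the paper: bound $\mathbb{P}_{Y_n}(E_n)$ by $\mathbb{P}(F_n)$ via Lemma \ref{5.4}, note that $E_n$ factors through the truncation to $\mathfrak{F}_r$ with $r=\floor{n^\alpha}+1$, and transfer to $X_n$ by Lemma \ref{5.3}. The only cosmetic point is that your relabelled $\sigma'$ lies in a conjugate of $G$ rather than in $G$ itself, which is harmless since $F_n$ quantifies over all transitive subgroups not containing $A_n$.
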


We are ready to give the proof of Theorem \ref{thm: small box main}, which will be broken up into several parts. The main idea is to specialize the polynomial $f(x,y)$ at $x=0$ and consider the factorization of $\tilde f=f(0,y)\in\F_q[y]_{n}^\mon$, the latter being a (uniformly) random monic polynomial in $\F_q[y]_{n}^\mon$, hence having a factorization type distributed according to $X_n$. The Galois group of $f$ then contains an element (the Frobenius element) with cycle structure close to that of $\tilde f$, which by Lemma \ref{lem: change small cycles} almost surely implies that the Galois group of $f$ contains $A_n$ if $f$ is irreducible (in this case the Galois group is transitive).
\begin{prop}\label{5.6}\label{prop:towards small box galois} Let $\alpha$ be the constant from Lemma \ref{5.4}. Consider a random polynomial of the form
$$f(x,y)=y^n+a_{n-1}(x)y^{n-1}+\ldots+a_1(x)y+a_0(x)\in\mathbb{F}_q[x][y],$$
where $a_0, a_1,\ldots,a_{n-1}\in\mathbb{F}_q[x]_{\le d}$ are chosen uniformly and independently. Denote $\tilde{f}=f(0,y)\in\F_q[y]$ and write $\tilde{f}=st$, where $s,t\in \F_q[y]$ are coprime monic polynomials such that $s$ is squarefree and every irreducible factor of $t$ has multiplicity at least $2$. Now consider the following three events:
\begin{itemize}
\item[$B_n$] - The degree of $t$ is at most $\floor{n^{\alpha}}$.

\item[$C_n$] - There is no transitive subgroup $A_n\not\subset G\leqslant S_n$ which contains an element $g\in G$ such that for all $i\geq\floor{n^{\alpha}}+1$ the number of cycles of $g$ of length $i$ is equal to the number of irreducible factors of $\tilde{f}$ of degree $i$.

\item[$D_n$] - The polynomial $f$ is separable (in $y$) and irreducible. 
\end{itemize}
Then $\lim\limits_{n\to\infty}\mathbb{P}(B_n\cap C_n\cap D_n)= 1-\frac{1}{q^d}$.
\end{prop}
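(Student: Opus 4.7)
The plan is to show that each of $B_n,C_n,D_n$ has a known limit probability and then combine them via a union bound. The key observation is that the specialization $\tilde f=f(0,y)=y^n+a_{n-1}(0)y^{n-1}+\cdots+a_0(0)$ depends only on the constant coefficients of the $a_i(x)$'s, which are uniform and independent in $\F_q$. Hence $\tilde f$ is uniformly distributed in $\F_q[y]_n^{\mon}$, and its factorization type follows exactly the distribution $X_n$ of \Cref{not: X and Y}.

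For $D_n$: \Cref{thm: irreducibility} together with \Cref{lem: irred and sep} yields $\P(D_n)\to 1-\frac 1{q^d}$. For $C_n$: the event $C_n^c$ is precisely the event that the factorization type of $\tilde f$ lies in the set $E_n\subset\mathfrak F$ from \Cref{5.5}, so $\P(C_n^c)=\P_{X_n}(E_n)\to 0$ by that corollary.

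The substantive step is verifying $\P(B_n)\to 1$, i.e.\ that the squarefull part $t$ of a uniformly random monic polynomial of degree $n$ in $\F_q[y]$ satisfies $\deg t\le \floor{n^{\alpha}}$ with probability tending to $1$. I would do this by a first-moment estimate. For each monic prime $P\in\F_q[y]$ of degree $k$ and each $m$ with $mk\le n$, one has $\P(P^m\mid\tilde f)=q^{-mk}$ exactly (the polynomials of degree $n$ divisible by $P^m$ are in bijection with $\F_q[y]_{n-mk}^{\mon}$). Writing, with $v_P$ the $P$-adic valuation,
\begin{equation*}
\deg t=\sum_{P}\deg P\cdot v_P(\tilde f)\,\mathbf 1[v_P(\tilde f)\ge 2],
\end{equation*}
taking expectations, and bounding each inner expectation by $\sum_{m\ge 2}m\,q^{-mk}\ll q^{-2k}$, one obtains $\mathbb E[\deg t]\ll \sum_{k\ge 1}k\cdot(q^k/k)\cdot q^{-2k}\ll\sum_{k\ge 1}q^{-k}$, independent of $n$ (here $q^k/k$ is the standard upper bound on the number of monic primes of degree $k$). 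Markov's inequality then gives $\P(B_n^c)=\P(\deg t>\floor{n^{\alpha}})=O(n^{-\alpha})\to 0$.

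Finally, combining via the union bound
\begin{equation*}
\P(D_n)-\P(B_n^c)-\P(C_n^c)\le\P(B_n\cap C_n\cap D_n)\le\P(D_n)
\end{equation*}
and letting $n\to\infty$ yields the claimed limit $1-\frac 1{q^d}$. The only genuinely new computation is the squarefull-degree estimate driving $\P(B_n)\to 1$; the remaining input is purely bookkeeping with the already established \Cref{thm: irreducibility}, \Cref{lem: irred and sep} and \Cref{5.5}. I expect no real obstacle beyond stating the first-moment bound cleanly.
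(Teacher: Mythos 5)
Your proposal is correct, and its overall structure is identical to the paper's: reduce everything to $\P(D_n)\to 1-q^{-d}$ (via \Cref{lem: irred and sep}), $\P(C_n^c)=\P_{X_n}(E_n)\to 0$ (via \Cref{5.5}, using that $\tilde f$ is uniform in $\F_q[y]_n^{\mon}$ because it only involves the independent uniform constant terms $a_i(0)$), and $\P(B_n^c)\to 0$, then a union bound. The only place you diverge is the $B_n$ step. The paper bounds the probability that some prime $P$ with $\deg P>m$ satisfies $P^2\mid\tilde f$ by $\sum_{\deg P>m}q^{-2\deg P}\ll q^{-m}$, asserts that otherwise $\deg t$ is controlled in terms of $m$, and lets $m=m(n)\to\infty$ slowly; you instead bound the first moment $\mathbb E[\deg t]=\sum_P\deg P\cdot\mathbb E\bigl[v_P(\tilde f)\mathbf 1[v_P(\tilde f)\ge 2]\bigr]\ll\sum_{k\ge 1}q^{-k}=O(1)$ and apply Markov to get $\P(\deg t>\floor{n^\alpha})=O(n^{-\alpha})$. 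Both are short elementary estimates, but your version has two small advantages: it yields an explicit rate, and it automatically handles the contribution of \emph{small} primes occurring with high multiplicity (e.g. behaviour like $P^j\mid\tilde f$ with $\deg P\le m$ and $j$ large), a case the paper's one-line assertion glosses over and which its "take $m(n)\to\infty$ slowly" device is implicitly meant to absorb. So your argument is, if anything, a slightly cleaner and more self-contained treatment of that sub-step; the rest is the same bookkeeping as in the paper.
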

\begin{proof} By Lemma \ref{5.1} we know that $\lim\limits_{n\to\infty}\mathbb{P}(D_n)=1-\frac{1}{q^d}$. So it is clearly sufficient to show that $\mathbb{P}(B_n^c) ,\mathbb{P}(C_n^c)\to 0$ as $n\to\infty$. 

First of all, observe that 
$\tilde{f}$ is a uniformly random 
polynomial in $\F_q[y]_n^\mon$. Let $m$ be a natural number. The probability that $\tilde f$ is divisible by $P^2$ for any prime $P\in\F_q[y]$ with $\deg P>m$ is bounded by
$$\sum_{P\atop{\deg P>m}} \frac 1{q^{2\deg P}}\le\sum_{l=m+1}^\infty\frac{q^l}{q^{2l}}\ll \frac 1{q^m},$$ which goes to 0 as $m\to\infty$. On the other hand if $\tilde f$ is not divisible by $P^2$ for a prime with $\deg P>m$ then $\deg t$ is bounded in terms of $m$. Taking $m=m(n)$ to infinity sufficiently slowly we obtain $\mathbb P(B_n^c)\to 0$ as $n\to\infty$.


It is left to show that $\mathbb{P}(C_n^c)\to 0$ as well. The event $C_n^c$ means that there is some transitive subgroup $A_n\not\subset G\leqslant S_n$ and an element $g\in G$ such that for all $i\geq\floor{n^{\alpha}}+1$ the number of cycles of $g$ of length $i$ is equal to the number of irreducible factors of $\tilde{f}$ of degree $i$. Since $\tilde{f}$ is a random monic polynomial of degree $n$ over $\mathbb{F}_q$, the probability $\mathbb{P}(C_n^c)$ is exactly the probability $\mathbb{P}_{X_n}(E_n)$ from Corollary \ref{5.5}. By Corollary \ref{5.5} this probability indeed tends to $0$.
\end{proof}

\begin{prop}\label{5.7}\label{prop:lifting} Let $\alpha$ be the constant from Lemma \ref{5.4}. Consider the random polynomial
$$f(x,y)=y^n+a_{n-1}(x)y^{n-1}+\ldots+a_1(x)y+a_0(x)\in\mathbb{F}_q[x][y],$$
where $a_0, a_1,\ldots,a_{n-1}\in\mathbb{F}_q[x]_{\le d}$ are uniformly random. As in Proposition \ref{prop:towards small box galois} we write $\tilde{f}=f(0,y)=st$ where $s,t\in \F_q[y]$ are coprime monic polynomials such that $s$ is squarefree and every irreducible factor of $t$ has multiplicity at least $2$. Let $B_n ,C_n, D_n$ be as in Proposition \ref{prop:towards small box galois}. Finally let
$$J_n=\{\text{$f$ separable, the Galois group of $f$ over $\mathbb{F}_q(x)$ is either $A_n$ or $S_n$}\}.$$
Then $B_n\cap C_n\cap D_n\subset  J_n$. 
\end{prop}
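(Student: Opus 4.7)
The strategy is to exhibit an explicit element $\sigma \in G_f$ whose cycle structure on the roots of $f$ matches the factorization of $\tilde f$ on every cycle of length exceeding $\lfloor n^{\alpha}\rfloor$, and then to invoke hypothesis $C_n$ together with the transitivity supplied by $D_n$ to conclude that $G_f\supseteq A_n$.

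Assume $B_n \cap C_n \cap D_n$ occurs. Embed $\F_q(x)\hookrightarrow \F_q((x))$ and view $f$ as a monic polynomial over the complete local ring $\F_q[[x]]$. By hypothesis $\tilde f = f(0,y) = s(y)t(y)$ with $\gcd(s,t)=1$ and $s$ squarefree, so Hensel's lemma lifts this to a factorization $f = f_s \cdot f_t$ in $\F_q[[x]][y]$ with monic factors reducing to $s$ and $t$ modulo $x$. Since $s$ is squarefree, the ring $\F_q[[x]][y]/(f_s)$ is a product of unramified extensions of $\F_q[[x]]$, so $f_s$ factors over $\F_q((x))$ into irreducibles whose degrees equal the degrees of the irreducible factors of $s$ over $\F_q$. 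Fix a prime $\mathfrak P$ of the splitting field of $f$ over $\F_q((x))$, and let $\sigma \in G_f$ be a Frobenius element attached to $\mathfrak P$, transported to $G_f$ via the standard injection $\Gal(f/\F_q((x)))\hookrightarrow \Gal(f/\F_q(x))$. The two sets of roots (of $f_s$ and of $f_t$) are each $\sigma$-invariant. On the $\deg s$ roots of $f_s$, $\sigma$ acts as Frobenius on an unramified residue extension, so its cycle lengths there are exactly the degrees of the irreducible factors of $s$; on the remaining $\deg t$ roots of $f_t$ its action is uncontrolled, but by $B_n$ this bad set has size at most $\lfloor n^{\alpha}\rfloor$.

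Consequently, every cycle of $\sigma$ of length greater than $\lfloor n^{\alpha}\rfloor$ lies inside the $f_s$-invariant subset, and every irreducible factor of $\tilde f$ of degree greater than $\lfloor n^{\alpha}\rfloor$ must come from $s$ rather than from $t$; hence, for each $i > \lfloor n^{\alpha}\rfloor$, the number of $i$-cycles of $\sigma$ equals the number of irreducible factors of $\tilde f$ of degree $i$. Under $D_n$ the group $G_f$ is transitive; if $G_f$ did not contain $A_n$, the pair $(G,g)=(G_f,\sigma)$ would violate $C_n$. Therefore $G_f \supseteq A_n$, which together with $G_f\le S_n$ forces $G_f\in\{A_n,S_n\}$ and establishes $J_n$. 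The main technical point is the local-to-global passage --- lifting the residual factorization of $\tilde f$ via Hensel and then extracting from the local Galois group a Frobenius in $G_f$ with controlled cycle behavior on the ``unramified'' roots --- which is standard but is where all the arithmetic input of the argument is concentrated.
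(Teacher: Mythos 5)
Your proof is correct and follows essentially the same route as the paper: both arguments produce an element of $G_f$ lifting the $q$-Frobenius at the specialization $x=0$ whose cycles of length $>\lfloor n^{\alpha}\rfloor$ match the large-degree irreducible factors of $\tilde f$ (forced by $B_n$), and then conclude via $C_n$ and the transitivity from $D_n$ that $G_f\supseteq A_n$. The only difference is presentational: the paper cites a ready-made lifting lemma (\cite[Proposition 2.8(iii)]{BEM24}) giving the surjection $Z_f\to Z_{\tilde f}$ intertwining $g$ with the $q$-power map, whereas you reprove that step explicitly by completing at $x=0$ and using Hensel's lemma to split off the unramified part $f_s$.
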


\begin{proof} Suppose that $f\in B_n\cap C_n\cap D_n$. In particular, $f$ is separable and irreducible. By a standard lifting of Galois elements argument (e.g. by \cite[Proposition 2.8(iii)]{BEM24}) applied to the $q$-Frobenius, there exists $g\in\Gal(f/\F_q(x))$ and a surjective map $$\psi:Z_f=\left\{\gamma\in\overline{\F_q(x)}:f(\gamma)=0\right\}\to Z_{\tilde f}=\left\{\gamma\in\overline\F_q:\tilde f(\gamma)=0\right\}$$ such that $\psi(g(\gamma))=(\psi(\gamma))^q$ for all $\gamma\in Z_f$.

Since $f\in B_n\cap D_n$ the roots of $\tilde f$ with multiplicity $\ge 2$ have degree $\le k=\lfloor n^\alpha\rfloor$ over $\F_q$ and therefore the cycles of length $>k$ in $g$ correspond to the degrees $>k$ in the prime factorization of $\tilde f$. Hence the assumption $f\in C_n$ combined with the irreducibility of $f$ (so $\Gal(f/\F_q(x))$ is transitive) implies that $\Gal(f/\F_q(x))\supset A_n$ and $f\in J_n$.

\end{proof}

\begin{proof}[Proof of Theorem \ref{1.3}] Let $B_n, C_n, D_n, J_n$ be the events we have defined in Propositions \ref{prop:towards small box galois} and \ref{prop:lifting}. By Lemma \ref{lem: irred and sep}, $\lim\limits_{n\to\infty}\mathbb{P}(D_n)=1-\frac{1}{q^d}$. Also, by Proposition \ref{5.6} we have
$$\lim\limits_{n\to\infty}\mathbb{P}(B_n\cap C_n\cap D_n)=1-\frac{1}{q^d}.$$
Finally, by Proposition \ref{5.7}:
$$\mathbb{P}(B_n\cap C_n\cap D_n)\leq\mathbb{P}(D_n\cap J_n)\leq\mathbb{P}(D_n).$$
It follows that $\mathbb{P}(D_n\cap J_n)\to 1-\frac{1}{q^d}$ as $n\to\infty$, which proves Theorem \ref{thm: small box main} 
\end{proof}

\section{Small box model - distinguishing between the symmetric and alternating groups}
\label{sec: chowla}

As we noted in the introduction, Theorem \ref{1.3} is the best we can prove at the moment unconditionally. In the present section we prove Theorem \ref{1.5} which says that assuming Conjecture \ref{1.4} we can distinguish between the groups $A_n$ and $S_n$ and show that $A_n$ occurs with zero limit probability. Throughout this section assume that $q$ is odd. 

Recall that over a field of characteristic $\neq 2$ the Galois group of a degree $n$ separable polynomial is contained in $A_n$ if and only if its discriminant is a square in the field. Given our assumption that $q$ is odd, we need to show that as $n\to\infty$, the discriminant of our random polynomial $f$ in the small box model is not a square in $\mathbb{F}_q(x)$ (equivalently in $\F_q[x]$) with probability tending to $1$. Assuming Conjecture \ref{1.4}, we will be able to do this. 

In what follows we fix a prime power $q$ and $d\ge 1$ and consider a random polynomial of the form
$$f(x,y)=y^n+a_{n-1}(x)y^{n-1}+\ldots+a_1(x)y+a_0(x)\in\mathbb{F}_q[x][y],$$
where $a_0, a_1,\ldots,a_{n-1}\in\F_q[x]_{\le d}$ are drawn uniformly and independently. As in the last section, let $D_n$ be the event that $f$ is irreducible and separable in $y$. We proved that $\lim\limits_{n\to\infty}\mathbb{P}(D_n)=1-\frac{1}{q^d}$. Moreover, by Theorem \ref{1.3} we know that conditional on $D_n$, the Galois group of $f$ over $\mathbb{F}_q(x)$ is almost surely equal to $A_n$ or $S_n$ when $n$ is large. So the only thing which can prevent the Galois group from being equal to $S_n$ is that $\text{disc}_y(f)$ might be a square in $\mathbb{F}_q[x]$.  Recall that if $\alpha_1,\ldots,\alpha_n$ are the roots of $f$ in the algebraic closure of $\mathbb{F}_q(x)$ then
\begin{equation}\label{eq: disc def}\text{disc}_y(f)=\prod\limits_{i<j}(\alpha_i-\alpha_j)^2\in\mathbb{F}_q[x].\end{equation}
(we are using the fact that $f$ is monic).

If our polynomial $f(x,y)\in\mathbb{F}_q[x]$ is such that $\text{disc}_y(f)$ is a square in $\mathbb{F}_q[x]$, then for each $k\geq 1$ and $\tau\in\mathbb{F}_{q^k}$, the discriminant of $f(\tau, y)\in\mathbb{F}_{q^k}[y]$ is a square in $\mathbb{F}_{q^k}$, since $\disc_y\left(f(\tau,y)\right)=\disc_y(f(x,y))(\tau)$. So if $\disc_yf(x,y)$ is a square in $\mathbb{F}_q[x]$ then we obtain many polynomials in $\mathbb{F}_{q^k}[y]$ with a discriminant which is a square in $\F_{q^k}$. 
Our main strategy will be to relate the question of whether $\disc_y(f(\tau,y))(\tau)$ is a square to the value of the Liouville function $\lambda(f(\tau,y))$. If $f(\tau,y)$ is squarefree this can be done via Pellet's formula, see Lemma \ref{lem: pellet} below. Then we will apply Conjecture \ref{conj: chowla} to conclude that $\disc_y(f(x,y))$ cannot be a square too often. We will however need to make sure that $\disc_y(f(\tau,y))\neq 0$ for the values of $\tau$ we pick, which will require some technical work and a result of Carmon \cite{Car21}.

In what follows we fix $d\ge 1$ and consider a random polynomial of the form
\begin{equation}\label{eq: rand pol chowla} f=y^n+a_{n-1}(x)y^{n-1}+\ldots+a_1(x)y+a_0(x),\quad a_i\in\F_q[x]_{\le d}\end{equation}
(with $a_i$ drawn uniformly and independently).

\begin{lem}\label{lem: carmon cond} Let $m,k$ be fixed and $\tau_1,\ldots,\tau_m\in\F_{q^k}\setminus\F_q$ non-conjugate elements. Consider a random polynomial of the form
\begin{equation}\label{eq: f0 form}f_0=y^n+a_{n-1}(x)y^{n-1}+\ldots+a_1(x)y,\quad a_i\in\F_q[x]_{\le d}\end{equation}(with $a_i$ drawn uniformly and independently)
and define
\begin{equation}\label{eq: Nf0 def}N_{f_0}=\prod_{i=1}^{m}\prod_{j=1}^k\left(T+f_0\left(\tau_i^{q^j},y\right)\right)\in \F_q[y][T].\end{equation}
Then
$$\lim_{n\to\infty}\mathbb P(N_{f_0}\mbox{ is separable in the variable }T)=1.$$
\end{lem}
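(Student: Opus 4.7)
My plan is to observe that $N_{f_0}$, being a product of $mk$ linear polynomials in $T$, is separable in $T$ if and only if the quantities $f_0(\tau_i^{q^j}, y) \in \overline{\F_q}[y]$ are pairwise distinct as $(i,j)$ ranges over the $mk$ index pairs. I will first note that the non-conjugacy hypothesis (together with the assumption that each $\tau_i$ has degree $k$ over $\F_q$) ensures that the $mk$ specialization points $\tau_i^{q^j}$ themselves are distinct in $\overline{\F_q}$. The problem then reduces to bounding, for each fixed pair of distinct $\tau, \sigma \in \overline{\F_q}$, the probability that $f_0(\tau, y) = f_0(\sigma, y)$ as polynomials in $y$, and applying a union bound over the $O((mk)^2)$ pairs.

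For the pairwise estimate, since $f_0 = y^n + \sum_{l=1}^{n-1}a_l(x)y^l$, the equality $f_0(\tau, y) = f_0(\sigma, y)$ in $\overline{\F_q}[y]$ holds if and only if $a_l(\tau) = a_l(\sigma)$ for every $1 \le l \le n-1$. The key observation is that the evaluation-difference map $a \mapsto a(\tau) - a(\sigma)$ from $\F_q[x]_{\le d}$ to $\overline{\F_q}$ is $\F_q$-linear and nonzero (for $d \ge 1$, the polynomial $x$ maps to $\tau - \sigma \neq 0$), so its kernel is a proper $\F_q$-subspace of codimension at least $1$. Hence $\mathbb P(a_l(\tau) = a_l(\sigma)) \le q^{-1}$ for each $l$, and by independence of $a_1, \ldots, a_{n-1}$,
\[\mathbb P\bigl(f_0(\tau, y) = f_0(\sigma, y)\bigr) \le q^{-(n-1)}.\]

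A union bound over the $\binom{mk}{2}$ pairs of distinct index-tuples then yields $\mathbb P(N_{f_0} \text{ not separable in } T) = O(q^{-n})$, which tends to $0$ as $n \to \infty$ since $m$ and $k$ are fixed. There is no serious obstacle here: the argument is essentially a direct computation combining elementary linear algebra over $\F_q$ with a union bound. The only point requiring mild care is verifying that the $mk$ points $\tau_i^{q^j}$ are in fact distinct, which follows from the pairwise non-conjugacy of the $\tau_i$ together with the standing assumption that each $\tau_i$ has minimal polynomial of degree exactly $k$ over $\F_q$.
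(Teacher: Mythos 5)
Your proof is correct and follows essentially the same route as the paper: reduce separability of $N_{f_0}$ in $T$ to the pairwise distinctness of the values $f_0(\tau_i^{q^j},y)$, bound each collision probability by $q^{1-n}$, and finish with a union bound over the fixed number of pairs. The only cosmetic difference is that you obtain the per-pair bound via coefficient-wise linearity and independence of the $a_l$, while the paper partitions the sample space into cosets $f_0+x\sum_l b_l y^l$ — the same counting argument in different clothing (and your explicit remark that distinctness of the points $\tau_i^{q^j}$ needs each $\tau_i$ to have degree exactly $k$ is a fair, slightly more careful reading of the paper's "by assumption").
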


\begin{proof} By assumption the elements $\tau_i^{q^j}$ are all distinct. The polynomial $N_{f_0}(T)$ is inseparable iff the values $f_0\left(\tau_i^{q^j},y\right),\,1\le i\le m, 1\le j\le k$ are not all distinct. If $f_0,(i,j)\neq (i',j')$ are such that $f_0\left(\tau_i^{q^j},y\right)=f_0\left(\tau_{i'}^{q^{j'}},y\right)$, then $f_1=f_0+x\sum_{l=1}^{n-1}b_ly^l$ satisfies $f_0\left(\tau_i^{q^j},y\right)\neq f_0\left(\tau_{i'}^{q^{j'}},y\right)$ whenever not all $b_l$ are zero.
Since the set of polynomials $f_0$ of the form (\ref{eq: f0 form}) can be subdivided into such families (with $b_l$ ranging over $\F_q$ for $1\le l\le n-1$), it follows that $\mathbb P\left(f_0\left(\tau_i^{q^j},y\right)=f_0\left(\tau_{i'}^{q^{j'}},y\right)\right)\le q^{1-n}\to 0$ as $n\to\infty$ for any specific pair $(i,j)\neq(i',j')$. Summing over all such pairs gives the assertion.
\end{proof}

We will need the following lemma, which follows from the work of Carmon \cite{Car21} on squarefree values of polynomials over $\F_q[x]$.

\begin{lem}\label{lem: carmon} Let $k,m$ be fixed natural numbers, $n$ a variable natural number, \begin{equation}\label{eq: N degrees}F(y,T)\in\F_q[y][T],\quad\deg_TF\le km,\deg_yF\le kmn\end{equation}
separable in the variable $T$ and $h\in\F_q[y]_{\le n-1}$ uniformly random. Then we have
$$\lim_{n\to\infty}\sup_{F}\P\left(\exists P\in\F_q[y]\mbox{ prime},\deg P\ge n/2 \mbox{ such that }P^2|F(y,h(y))\right)=0,$$
where the supremum is over all separable $F$ satisfying (\ref{eq: N degrees}).
\end{lem}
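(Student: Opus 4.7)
The plan is to bound the probability via the union bound
\[\P\left(\exists P:\deg P\ge n/2,\,P^2\mid F(y,h(y))\right)\le\sum_{\deg P\ge n/2}\P(P^2\mid F(y,h(y))),\]
and to estimate each term individually. Since $F(y,h(y))$ has degree at most $\deg_y F+\deg_T F\cdot\deg h\le 2kmn$, only primes $P$ with $\deg P\le kmn$ can contribute. Let $\Delta(y)=\disc_T F\in\F_q[y]$; by the separability hypothesis $\Delta\ne 0$, and a standard bound yields $\deg\Delta=O_{k,m}(n)$, so only $O_{k,m}(1)$ primes of degree $\ge n/2$ divide $\Delta$. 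I would split the sum into ``good'' primes $P\nmid\Delta$ and ``bad'' primes $P\mid\Delta$.

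For good primes, I would apply Hensel's lemma: the at most $\deg_T F\le km$ roots of $F(y,T)\bmod P$ in the field $\F_q[y]/(P)$ are all simple, and each lifts uniquely to a root modulo $P^2$. Since $\deg P\ge n/2$ makes the reduction $\F_q[y]_{\le n-1}\hookrightarrow\F_q[y]/(P^2)$ injective, at most $km$ values of $h$ in the range can satisfy $P^2\mid F(y,h(y))$ via this $P$, giving $\P(P^2\mid F(y,h(y)))\le km/q^n$ per good prime. Summing over $e=\deg P$ with $n/2\le e\le n$ yields a contribution bounded by $O_{k,m}(1/n)$ (using the prime count $\le q^e/e$), which is $o(1)$; the problematic range is $n<e\le kmn$, where this naive bound diverges by a factor $q^{(km-1)n}$.

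This is where I would invoke the result of Carmon \cite{Car21} on squarefree values of $F(y,h(y))$ for $h\in\F_q[y]_{\le n-1}$. Uniformly in separable $F$ satisfying the stated degree bounds, Carmon's theorem supplies a quantitative squarefree-values estimate that controls the tail probability of $F(y,h(y))$ being divisible by a prime square of large degree; morally, the pessimistic per-prime bound $km/q^n$ gets replaced by an Euler-product-type factor of order $1/q^{2\deg P}$, making the tail $\sum_{\deg P\ge n/2}$ convergent and $o(1)$. The uniformity in $F$ is crucial and is precisely what the hypotheses $\deg_y F\le kmn,\,\deg_T F\le km$ with $F$ separable are tailored to guarantee.

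For the $O_{k,m}(1)$ bad primes $P\mid\Delta$ of degree $\ge n/2$, I would analyze $F\bmod P$ carefully: its multiple roots correspond to ramified lifts in Hensel's lemma, and I would parametrize the set $\{h\in\F_q[y]_{\le n-1}:P^2\mid F(y,h(y))\}$ as an algebraic subvariety of $\F_q[y]_{\le n-1}\cong\F_q^n$, showing it has positive codimension determined by the multiplicity structure. A direct Lang--Weil-type count then gives $\P(P^2\mid F(y,h(y)))=o(1)$ per bad prime, which sums to $o(1)$ over the finitely many such primes. The main obstacle is the good-prime tail in the range $n<\deg P\le kmn$: elementary Hensel-plus-injection counting is genuinely insufficient there, and the essential input is Carmon's deep squarefree-values theorem, which provides the needed uniform-in-$F$ estimate to close the argument.
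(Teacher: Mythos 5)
Your proposal shares the paper's essential crutch: the paper proves this lemma purely by citation, combining two explicit estimates of Carmon \cite{Car21} (equations (4.3) and (4.7) there), which together bound, uniformly in separable $F$ of the stated degrees, the number of $h\in\F_q[y]_{\le n-1}$ for which $F(y,h(y))$ has a square divisor $P^2$ with $\deg P\ge n/2$. You instead re-prove part of the range elementarily and outsource only $\deg P>n$ to Carmon. Your medium-range argument is correct: for $P\nmid\disc_TF$ (and $P$ not dividing the leading $T$-coefficient), Hensel plus injectivity of $\F_q[y]_{\le n-1}\to\F_q[y]/(P^2)$ gives $\P\bigl(P^2\mid F(y,h(y))\bigr)\le km/q^n$, and summing against the prime count $q^e/e$ over $n/2\le e\le n$ gives $O_{k,m}(1/n)$. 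This buys a self-contained treatment of that range; the paper's route is shorter and handles all of $\deg P\ge n/2$ at once.

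Two soft spots. First, your gloss on the large-prime range is not an argument: no per-prime bound of shape $q^{-2\deg P}$ is provable for $\deg P>n$ (the event sees $h$ only through $q^n$ values), and Carmon's input there is a \emph{global} bound on the number of $h$ admitting some large-degree prime square divisor, obtained by a different mechanism; you should cite that statement rather than the Euler-product heuristic, which is exactly the thing needing proof. Second, the ``Lang--Weil-type count'' for primes dividing $\disc_TF$ is vague and unnecessary: if $F\bmod P\not\equiv 0$, then $h$ must lie in one of at most $km$ residue classes mod $P$, so the probability is at most $km\,q^{-\deg P}\le km\,q^{-n/2}$, and there are only $O_{k,m}(1)$ such primes of degree $\ge n/2$. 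One must also dispose of primes dividing the $T$-content of $F$ (if $P^2$ divided the content the event would have probability $1$, so your claimed $o(1)$ per bad prime would fail); this is harmless in the paper's application, where $F=N_{f_0}$ is monic in $T$, but it should be noted explicitly.
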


\begin{proof} The assertion follows at once by combining \cite[equation (4.3)]{Car21} and \cite[equation (4.7)]{Car21}. 
\end{proof}

\begin{prop}\label{prop: sep for chowla}Let $m,k,n$ be natural numbers with $k$ prime, $\tau_1,\ldots,\tau_m\in\F_{q^k}\setminus\F_q$ elements which are non-conjugate over $\F_q$ and such that $1,\tau_i,\tau_i^2,\ldots,\tau_i^d$ are linearly independent over $\F_q$ for each $1\le i\le m$. 
Let $$f_0=y^n+a_{n-1}(x)y^{n-1}+\ldots+a_1(x)y,\quad a_i\in\F_q[x]_{\le d}$$ be such that 
$N_{f_0}$ is separable in $T$ (using the notation (\ref{eq: Nf0 def})).
Let $h\in\F_q[x]_{\le n-1}$ be a (uniformly) random polynomial. Then
\begin{equation}\label{eq: sep for chowla}\lim_{k\to\infty}\varlimsup_{n\to\infty}\sup_{f_0,\tau_1,\ldots,\tau_m}\,\mathbb P\biggl(\exists 1\le i\le m:\,f_0(\tau_i,y)+h(y)\mbox{ is not squarefree and }\con_x(f_0(x,y)+h(y))=1\biggr)=0,\end{equation}
where the supremum is over all $f_0$ and $\tau_1,\ldots,\tau_m$ as above.
\end{prop}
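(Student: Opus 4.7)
The plan is to apply Lemma \ref{lem: carmon} to $F(y,T):=N_{f_0}(y,T)$, which is separable in $T$ by hypothesis and satisfies the required degree bounds $\deg_T F=km$, $\deg_y F=kmn$. The key geometric observation is that $\tau_i^{q^k}=\tau_i$, so $f_0(\tau_i,y)+h(y)$ is one of the linear factors of $F(y,T)$ when we substitute $T=h(y)$. Hence if some irreducible $Q\in\F_{q^k}[y]$ satisfies $Q^2\mid f_0(\tau_i,y)+h(y)$, then Galois-conjugating gives $\sigma_s(Q)^2\mid f_0(\tau_i^{q^s},y)+h(y)$ for all $s$; when $Q\notin\F_q[y]$ and $k$ is prime the $k$ conjugates $\sigma_s(Q)$ are pairwise distinct (hence coprime), and their product $P:=\prod_{s=0}^{k-1}\sigma_s(Q)\in\F_q[y]$ is the $\F_q$-minimal polynomial of any root of $Q$, of degree $k\deg Q$, satisfying $P^2\mid F(y,h(y))$. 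I would then split by whether $Q$ lies in $\F_q[y]$ and by the size of $\deg Q$.

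\textbf{Case A} ($Q\in\F_q[y]$): I claim this is impossible whenever $\con_x(f_0(x,y)+h(y))=1$. Writing $f_0(x,y)+h(y)=\sum_{j=0}^d c_j(y)x^j$ with $c_j\in\F_q[y]$, the divisibility reads $Q^2\mid\sum_{j=0}^d c_j(y)\tau_i^j$ in $\F_{q^k}[y]/Q^2\cong\F_{q^k}\otimes_{\F_q}R$, where $R=\F_q[y]/Q^2$. Once $k\ge d+1$ (which holds as $k\to\infty$), the $\F_q$-linear independence of $1,\tau_i,\ldots,\tau_i^d$ lifts to $R$-linear independence of the $\tau_i^j\otimes 1$ (the relevant $(d+1)\times k$ $\F_q$-matrix has full row rank $d+1$, hence contains an invertible $(d+1)\times(d+1)$ minor whose determinant remains a unit in $R$). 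This forces $c_j\equiv 0\pmod{Q^2}$ for every $j$, so $Q\mid\gcd(c_0,\ldots,c_d)=\con_x(f_0(x,y)+h(y))=1$, a contradiction.

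\textbf{Case B} ($Q\in\F_{q^k}[y]\setminus\F_q[y]$, $\deg Q=e$): If $e\ge n/(2k)$, then $\deg P\ge n/2$, and Lemma \ref{lem: carmon} applied to $F=N_{f_0}$ bounds this contribution by $o_n(1)$ uniformly in $f_0,\tau_i$. For $e<n/(2k)$ I would argue directly: pairwise coprimality of the $\sigma_s(Q)$ yields $\F_q[y]\cap Q^2\F_{q^k}[y]=P^2\F_q[y]$, so the natural inclusion induces an injection $\F_q[y]/P^2\hookrightarrow\F_{q^k}[y]/Q^2$ which, by matching $\F_q$-dimensions (both equal to $2ke$), is an isomorphism of $\F_q$-vector spaces. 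The requirement $h\equiv -f_0(\tau_i,y)\pmod{Q^2}$ therefore pins $h$ down to a single residue class modulo $P^2$, having probability exactly $q^{-2ke}$ for uniform $h\in\F_q[y]_{\le n-1}$. A union bound over the at most $q^{ke}/e$ monic irreducible $Q$ of degree $e$, and summation $\sum_{e\ge 1}q^{-ke}/e=O(q^{-k})$, times a factor of $m$ for the choice of $i$, gives a uniform total of $O(mq^{-k})$.

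Combining both cases yields $\varlimsup_{n\to\infty}\sup_{f_0,\tau_1,\ldots,\tau_m}\P(\text{bad})\le O(mq^{-k})$, and letting $k\to\infty$ closes the argument. The main obstacle is the range of small-degree $Q$ that lies outside the $\deg P\ge n/2$ threshold of Lemma \ref{lem: carmon}; this is overcome by the two-pronged use of the content hypothesis (which eliminates Case A entirely) and the dimensional coincidence $\dim_{\F_q}\F_q[y]/P^2=\dim_{\F_q}\F_{q^k}[y]/Q^2=2ke$ (which makes the resulting congruence on $h$ so restrictive that the geometric series in $e$ decays in $k$).
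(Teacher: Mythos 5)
Your proposal is correct and follows essentially the same route as the paper: split according to whether the repeated irreducible factor $Q$ lies in $\F_q[y]$ (ruled out by the content hypothesis via the linear independence of $1,\tau_i,\ldots,\tau_i^d$), and otherwise pass to the $\F_q[y]$-prime $P=\prod_s\sigma_s(Q)$, handling $\deg P\ge n/2$ by Lemma \ref{lem: carmon} applied to $N_{f_0}$ and small $\deg P$ by the congruence-counting argument mod $P^2$, yielding the same $O(mq^{-k})$ bound. The only cosmetic differences are your tensor-product/minor phrasing of the content step and summing over $Q\in\F_{q^k}[y]$ rather than over the primes $P\in\F_q[y]$, which give the same estimates.
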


\begin{proof} If $f_0(\tau_i,y)+h(y)$ is not squarefree for some $i$, then there exists $\pi\in\F_{q^k}[y]$ such that $\pi^2|f_0(\tau_i,y)+h(y)$.
There are two cases to consider: if $\pi\in\F_q[y]$ then since $\pi|f_0(\tau_i,y)+h(y)$, by our assumption that $1,\tau_i,\ldots,\tau_i^d$ are linearly independent it follows that $\pi\mid\con_x(f_0(x,y)+h(y))$. To see this write $f_0(x,y)+h(y)=\sum_{l=0}^dx^lb_l(y),\,b_l\in\F_q[y]$; then $\pi|\sum_{l=0}^db_l(y)\tau_i^l$ and by the linear independence of $\tau_i^l$ (in this case $\F_{q^k}[y]$ is a free $\F_q[y]$-module with basis $1,\tau_i,\ldots,\tau_d,\rho_{1},\ldots,\rho_{k-d-1}$ for some $\rho_1,\ldots,\rho_{k-d-1}\in\F_{q^k}$) we must have $\pi|b_l$ for each $l$, so $\pi|\gcd(b_0,\ldots,b_l)=\con_x(f)$. So in this case the event we are interested in does not occur.

On the other hand if $\pi\in\F_{q^k}[y]\setminus\F_q[y]$ then since $k$ is prime we have that $P=N_{\F_{q^k}/\F_q}(\pi)=\prod_{j=1}^k\pi^{\sigma^j}\in\F_q[y]$ (here $\sigma$ is the $q$-Frobenius automorphism of $\F_{q^k}$ extended to $\F_{q^k}[y]$ by coefficient-wise action) is prime of degree $k\deg\pi$ and we have \begin{equation}\label{eq: P2 divides Nf0}P^2\,\mid\,N_{\F_{q^k}/\F_q}(f_0(\tau_i,y)+h(y))\,\mid\,
\prod_{l=1}^mN_{\F_{q^k}/\F_q}(f_0(\tau_l,y)+h(y)))=
N_{f_0}(y,h(y)).\end{equation} We will treat the cases $\deg\pi\le n/2k$ and $\deg\pi>n/2k$ separately.

\case{Small primes: $\deg\pi\le n/2k$} If $\pi^2|f_0(\tau_i,y)+h(y)$ then for $h_1\in\F_q[y]_{\le n-1}$ we have $\pi^2|f_0(\tau_i,y)+h_1(y)$ iff $\pi^2|h-h_1$ iff $P^2|h-h_1$ (we are using the fact that $k$ is prime and $\pi\in\F_{q^k}[y]\setminus\F_q[y]$). Since $\deg P^2=2k\deg\pi\le n$ it follows that $$\bigl|\{h\in\F_q[y]_{\le n-1}:\,\pi^2|f_0(\tau_i,y)+h(y)\}\bigr|=q^{n-2\deg P}$$ and hence $\P(\pi^2|f_0(\tau_i,y)+h(y))= q^{-2\deg P}$. Noting that $\deg P=k\deg\pi\ge k$ we obtain
\begin{equation}\label{eq: small primes}\P\left(\pi^2|f_0(\tau_i,y)+h(y)\mbox{ for some prime }\pi\in\F_{q^k}\setminus\F_q,\,\deg\pi\le n/2k\right)\le\sum_{P\, \mathrm{prime}\atop{k\le\deg P\le n}}q^{-2\deg P}=O(q^{-k}).\end{equation}
\case{Large primes: $\deg\pi>n/2k$} In this case $\deg P>n/2$ and by (\ref{eq: P2 divides Nf0}) and Lemma \ref{lem: carmon} applied to $N_{f_0}$ we obtain
\begin{multline}\label{eq: large primes}\P\left(\pi^2|f_0(\tau_i,y)+h(y)\mbox{ for some prime }\pi\in\F_{q^k}\setminus\F_q,\,\deg\pi>n/2k\right)\\
\le \P\left(P^2|N_{f_0}(y,h(y))\mbox{ for some prime }P\in\F_q,\,\deg\pi>n/2\right)\to 0\end{multline}
as $n\to\infty$ uniformly in $f_0$ as long as $m,k$ are fixed.

As we observed above, the event in (\ref{eq: sep for chowla}) is contained in the union of the events in (\ref{eq: small primes}) and (\ref{eq: large primes}) over all $1\le i\le m$ and the proposition follows.

\end{proof}


\begin{lem}\label{6.5}\label{lem: pellet} 
(Pellet's formula) Assume $q$ is odd. 
Let $f\in\mathbb{F}_q[y]$ be a non constant polynomial. 
Then 
$\mu(f)=(-1)^{\deg f}\chi(\disc(f))$, 
where $\chi$ is the unique quadratic character of $\F_q$.
\end{lem}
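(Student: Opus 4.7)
My plan is to compute both sides via the action of the $q$-Frobenius on the splitting field of $f$ over $\F_q$. First I would dispose of the degenerate cases. If $f$ is not squarefree then $\mu(f)=0$ by definition and $\disc(f)=0$, so both sides vanish. If $f=c\tilde f$ with $c\in\F_q^\times$ and $\tilde f$ monic of degree $n$, then $\disc(f)=c^{2n-2}\disc(\tilde f)$ differs from $\disc(\tilde f)$ by a square, so $\chi(\disc f)=\chi(\disc\tilde f)$, and the Möbius function is defined via the monic part; this reduces everything to the monic squarefree case.

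Assume from now on that $f$ is monic and squarefree of degree $n$ with prime factorization $f=P_1\cdots P_k$, $\deg P_i=d_i$, $\sum_i d_i=n$, so that $\mu(f)=(-1)^k$. Let $L/\F_q$ be the splitting field of $f$, let $\alpha_1,\ldots,\alpha_n\in L$ be the roots of $f$, and let $\sigma\in\Gal(L/\F_q)$ be the $q$-Frobenius. Since $P_i$ is irreducible of degree $d_i$, its $d_i$ roots in $L$ form a single Frobenius orbit of length $d_i$; consequently the permutation of $\{\alpha_1,\ldots,\alpha_n\}$ induced by $\sigma$ has cycle type $(d_1,\ldots,d_k)$ and sign $\mathrm{sgn}(\sigma)=\prod_{i=1}^k(-1)^{d_i-1}=(-1)^{n-k}$.

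The final step will use the classical relation between the sign of a Galois element and the square root of the discriminant. Set $\delta=\prod_{i<j}(\alpha_i-\alpha_j)\in L$, so $\delta^2=\disc(f)$; for any $\rho\in\Gal(L/\F_q)$ one has $\rho(\delta)=\mathrm{sgn}(\rho)\,\delta$, where $\rho$ is viewed as a permutation of the roots. Since $\F_q$ is precisely the fixed field of $\sigma$, the element $\disc(f)\in\F_q$ is a square in $\F_q$ iff $\delta\in\F_q$ iff $\sigma(\delta)=\delta$ iff $\mathrm{sgn}(\sigma)=1$. Hence $\chi(\disc(f))=\mathrm{sgn}(\sigma)=(-1)^{n-k}$, and therefore $(-1)^n\chi(\disc(f))=(-1)^k=\mu(f)$, which is exactly the stated formula. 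There is no serious obstacle here, as Pellet's formula is classical; the two ingredients that require care — the identification of the cycle type of Frobenius with the factorization type of $f$ over $\F_q$, and the identity $\rho(\delta)=\mathrm{sgn}(\rho)\delta$ — are both entirely standard and require only routine verification.
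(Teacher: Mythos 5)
Your proof is correct. The paper itself does not argue this lemma at all: it simply cites \cite[Lemma 4.1]{Con05}, so there is no in-paper argument to compare against; what you have written is essentially the standard (Pellet--Stickelberger) proof that the cited reference gives. Your reductions are sound: the non-squarefree case makes both sides vanish (with $\chi(0)=0$), the leading coefficient only changes $\disc$ by the even power $c^{2n-2}$, and squarefreeness over the perfect field $\F_q$ gives distinct roots, so $\delta=\prod_{i<j}(\alpha_i-\alpha_j)\neq 0$. The two key inputs --- that Frobenius acts on the roots with cycle type equal to the factorization type, giving $\mathrm{sgn}(\sigma)=(-1)^{n-k}$, and that $\rho(\delta)=\mathrm{sgn}(\rho)\,\delta$, so $\disc(f)$ is a square in $\F_q$ iff $\delta$ is Frobenius-fixed --- are exactly where the hypothesis $q$ odd enters (one needs $\delta\neq-\delta$), matching the paper's remark that this lemma breaks down in characteristic $2$. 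The sign bookkeeping $(-1)^n(-1)^{n-k}=(-1)^k=\mu(f)$ closes the argument; nothing is missing.
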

\begin{proof} See \cite[Lemma 4.1]{Con05}.
\end{proof}

We now come to the main result of this section, from which Theorem \ref{thm: chowla} will follow at once, and it will also be instrumental for the proof of Theorem \ref{thm: reducible chowla}.

\begin{thm} Assume Conjecture \ref{conj: chowla}. Let $q,d$ be fixed, with $q$ odd. \label{thm: disc} Let $f$ be a random polynomial as in (\ref{eq: rand pol chowla}). Then \begin{equation}\label{eq: need to prove 2}\lim_{n\to\infty}\P\left(\con_x(f)=1\mbox{ and }\disc_y(f)=at^2\mbox{ for some }t\in\F_q[x],\,a\in\F_q\right)=0\end{equation}
(the condition on $\disc_y(f)$ is equivalent to it being a square in $\overline \F_q[x]$).
\end{thm}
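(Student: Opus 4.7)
The plan is to combine Pellet's formula with Conjecture \ref{conj: chowla} in an amplification scheme that beats a single Chowla application by iterating it $R$ times. It suffices to show $\P(E_a)\to 0$ for each fixed $a\in\F_q^*$, where $E_a=\{\con_x(f)=1,\ \disc_y(f)=at^2\text{ for some }t\in\F_q[x]\}$, since $\disc_y(f)=0$ has negligible probability by \Cref{lem: irred and sep}. Reparametrize $f=f_0+h(y)$ with $f_0(x,y)=y^n+\sum_{j=1}^d\beta_j(y)x^j$ and $(h,\beta_1,\ldots,\beta_d)$ uniform independent in $\F_q[y]_{\le n-1}^{d+1}$.

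Fix an odd prime $k>d$ and integer $R\ge 1$, then choose pairwise non-conjugate $\tau_1,\ldots,\tau_R\in\F_{q^k}$ of degree exactly $k$ over $\F_q$. Define
\[ N_s(y,T)=\prod_{j=0}^{k-1}\bigl(T+f_0(\tau_s^{q^j},y)\bigr)\in\F_q[y][T], \]
so $N_s(y,h(y))=\prod_j f(\tau_s^{q^j},y)$. A union-bound extension of \Cref{lem: carmon cond} shows the $Rk$ values $f_0(\tau_s^{q^j},y)$ are pairwise distinct with probability $1-o_n(1)$, making $\prod_{s\in S}N_s$ separable in $T$ for every $S\subset[R]$ simultaneously. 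Since $k$ is odd, $\lambda_{\F_q}=\lambda_{\F_{q^k}}$ on $\F_q[y]$ (as any prime in $\F_q[y]$ splits into $\gcd(\deg,k)$ primes in $\F_{q^k}[y]$, an odd count), so by multiplicativity $\lambda(N_s(y,h(y)))=\prod_j\lambda_k(f(\tau_s^{q^j},y))$. On the event $E_a\cap\mathrm{Good}$ (where $\mathrm{Good}$ means $\con_x(f)=1$ and every $f(\tau_s^{q^j},y)$ is squarefree over $\F_{q^k}$, which automatically forces $t(\tau_s^{q^j})\ne 0$), \Cref{lem: pellet} applied in $\F_{q^k}$ gives $\lambda_k(f(\tau_s^{q^j},y))=(-1)^n\chi_k(a)$, hence $\lambda(N_s(y,h(y)))=((-1)^n\chi_k(a))^k=(-1)^n\chi_k(a)=:c_a\in\{\pm 1\}$, independent of $s$.

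For amplification, set
\[ \Phi_a(h)=\prod_{s=1}^R\frac{1+c_a\lambda(N_s(y,h(y)))}{2}\in[0,1], \]
which equals $1$ on $E_a\cap\mathrm{Good}$. Expanding and using multiplicativity,
\[ \sum_h\Phi_a(h)=2^{-R}\sum_{S\subset[R]}c_a^{|S|}\sum_h\lambda\bigl(\textstyle\prod_{s\in S}N_s(y,h(y))\bigr). \]
The $S=\emptyset$ term contributes $2^{-R}q^n$; for each nonempty $S$, $\prod_{s\in S}N_s$ has $\deg_T\le Rk$ and $\deg_y\le Rkn$, so Conjecture \ref{conj: chowla} with constant $c=Rk$ gives $\sum_h\lambda(\prod_{s\in S}N_s(y,h(y)))=o(q^n)$ uniformly in separable $f_0$. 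Summing the $2^R-1$ nonempty-$S$ terms and averaging over $f_0$ (absorbing the $o(1)$ probability of non-separability) yields $\P(E_a\cap\mathrm{Good})\le 2^{-R}+o_n(1)$. Summing over the two cosets $a\in\F_q^*/(\F_q^*)^2$ and using \Cref{prop: sep for chowla} (extended to the $Rk$ points $\tau_s^{q^j}$) for $\P(E\setminus\mathrm{Good})=o_n(1)$, we get $\P(E)\le 2^{1-R}+o_n(1)$. Letting $n\to\infty$ then $R\to\infty$ concludes $\P(E)\to 0$.

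The main obstacle is routine bookkeeping, primarily showing that simultaneous separability of $\prod_{s\in S}N_s$ for all $2^R$ subsets holds under a single event of probability $1-o_n(1)$ (a union bound over pairs of distinct indices $(s,j)$), and extending \Cref{prop: sep for chowla} to the finite collection $\{\tau_s^{q^j}\}$. The essential feature is the use of the full uniform strength of Conjecture \ref{conj: chowla} with $c=Rk$ allowed to be an arbitrary (fixed) constant---this is why the weaker Sawin--Shusterman results are insufficient.
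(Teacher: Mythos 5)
Your proposal is essentially the paper's own proof: the same splitting of $f$ into a fixed part $f_0$ plus a uniformly random $h\in\F_q[y]_{\le n-1}$, the same norm polynomials $N_s(y,T)$ built from a full Frobenius orbit of degree-$k$ specialization points, Pellet's formula turning the hypothesis $\disc_y(f)=a\cdot\square$ into a forced value of $\lambda$ at each specialization, the product/amplification identity giving $\P(E_a\cap\mathrm{Good})\le 2^{-R}+o(1)$ via Conjecture \ref{conj: chowla} applied to the separable products $\prod_{s\in S}N_s$, and the Carmon-based input (Lemma \ref{lem: carmon}, packaged as Proposition \ref{prop: sep for chowla}) to discard non-squarefree specializations. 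Your parametrization $f_0=y^n+\sum_{j=1}^d\beta_j(y)x^j$ is in fact the cleaner way to set up the independence of $h$ and $f_0$, and the separability of all $\prod_{s\in S}N_s$ by a union bound over pairs of specialization points is exactly the content of Lemma \ref{lem: carmon cond}.

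One quantifier slip needs repair: you fix the odd prime $k>d$ once and for all and then assert $\P(E_a\setminus\mathrm{Good})=o_n(1)$ by Proposition \ref{prop: sep for chowla}. That proposition does not give this for fixed $k$: its conclusion is a $\lim_{k\to\infty}\varlimsup_{n\to\infty}$ statement, and for fixed $k$ the small-prime contribution is only $O(Rq^{-k})$ (not $o_n(1)$), which moreover grows with $R$. The limits must therefore be nested as in the paper: given $\epsilon>0$, first choose $R$ with $2^{1-R}<\epsilon$, then $k\ge k_1(q,d,R)$ so that the non-squarefree event (intersected with the content-free event) has probability at most $\epsilon$ for all large $n$, and only then let $n\to\infty$. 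With that reordering the argument closes; note also that squarefreeness at the $R$ base points $\tau_s$ already implies it at all conjugates $\tau_s^{q^j}$, since coefficientwise Frobenius is an automorphism (this is also how the paper transports the $\lambda$-values in (\ref{eq: lambda eq1})), so no genuine extension of Proposition \ref{prop: sep for chowla} is needed. Finally, for the $a=0$ case you should invoke $\P(\con_x(f)=1)\to 1-q^{-d}$ (Proposition \ref{prop: content}) alongside Lemma \ref{lem: irred and sep}: $\P(\disc_y(f)=0)$ alone does not tend to $0$ (e.g.\ $\con_x(f)$ may be non-squarefree with probability bounded away from zero); only its intersection with $\{\con_x(f)=1\}$ does.
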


\begin{proof} 
For brevity we denote squares in $\F_q[x]$ by $\square$. Clearly it is enough to prove
$$\lim_{n\to\infty}\P(\disc_y(f)=a\cdot\square\mbox{ and }\con_x(f)=1)=0$$ for any fixed $a\in\F_q$. First assume $a=0$. It follows from (\ref{eq: prob irred sep}) combined with Proposition \ref{prop: content} that $\lim_{n\to\infty}\P(f\mbox{ irreducibe and separable})=\lim_{n\to\infty}\P(\con_x(f)=1)=1-q^{-d}$. Noting that for irreducible $f$ we have $\disc_y(f)=0$ iff $f$ is inseparable, we obtain (\ref{eq: need to prove 2}). So for the rest of the proof we fix $a\in\F_q^\times$.

For now we fix a natural number $m$ and a prime $k>2$. Write $f=f_0+h$, where $f_0$ is a random polynomial as in Lemma \ref{lem: carmon cond} and $h\in\F_q[y]_{\le n-1}$ uniformly. First we note that by Lemma \ref{lem: carmon cond}, asymptotically almost surely $N_{f_0}$ is separable. So it is enough to show
$$\lim_{n\to\infty}\P(\disc_y(f)=a\cdot\square,\,N_{f_0}\mbox{ is separable},\,\con_x(f)=1)=0$$ and for this it is enough to show that for any given $f_0$ with $N_{f_0}$ separable, we have \begin{equation}\label{eq: need to prove}\lim_{n\to\infty}\P\left(\disc_y(f_0+h)=a\cdot\square,\,\con_x(f_0+h)=1\right)=0\end{equation}uniformly in $f_0$. From now on we assume that $f_0$ is given and $h\in\F_q[x]_{\le n-1}$ is uniformly random.

Now we make the assumption that $k\ge k_0(q,d,m)$ is large enough so that there exist non-conjugate $\tau_1,\ldots,\tau_m\in\F_{q^k}\setminus\F_q$ such that $1,\tau_i,\tau_i^2,\ldots,\tau_i^d$ are linearly independent over $\F_q$ for each $1\le i\le m$ (so they satisfy the assumptions of Proposition \ref{prop: sep for chowla}). 

Let $h$ be such that 
\begin{enumerate}\item[(i)]$\disc_y(f_0+h)=a\cdot\square$ and $\con(f_0+h)=1,$
\item[(ii)]$f_0(\tau_i,y)+h(y)$ is squarefree for each $1\le i\le m$.
\end{enumerate} (by Proposition \ref{prop: sep for chowla}, (i) implies (ii) with limit probability close to 1 if $k$ is large). By Pellet's formula (Lemma \ref{lem: pellet}) and condition (ii) we have (noting that the Liouville function $\lambda$ and the M\"obius function $\mu$ coincide on squarefree arguments) \begin{equation}\label{eq: lambda mu}\lambda(f_0(\tau_i,y)+h(y))=\mu(f_0(\tau_i,y)+h(y))=(-1)^n\chi_k(\disc_y(f_0(\tau_i,y)+h(y)))\quad(1\le i\le m),\end{equation} where $\chi_k$ is the unique quadratic character of $\F_{q^k}$. Then by condition (i) for each $1\le i\le m$ we have that $\disc_y(f_0(\tau_i,y)+h(y))=as_i^2$ for some $s_i\in\F_{q^k}$ and we get (using (\ref{eq: lambda mu}) and the fact that the $q$-Frobenius is an automorphism of $\F_{q^k}$) \begin{equation}\label{eq: lambda eq1} \lambda(f_0(\tau_i^{q^j},y)+h(y))=(-1)^n\chi_k(a)=(-1)^n\chi(a),\quad 1\le i\le m,\,1\le j\le k,\end{equation} 
where $\chi$ is the unique quadratic character of $\F_q$ (we are using the fact that $k$ is an odd prime). Denote $$N_i=\prod_{j=1}^k(T+f_0(\tau_i^{q^j}))\in\F_q[y][T],\quad 1\le i\le m.$$ Note that $N_{f_0}=N_1\cdots N_m$, hence the $N_i$ are separable and pairwise coprime (we are assuming that $N_{f_0}$ is separable). Taking the product of (\ref{eq: lambda eq1}) over all $1\le j\le k$ and using the multiplicativity of $\lambda$ and that $k$ is odd we obtain $\lambda(N_i(y,h(y)))=(-1)^n\chi(a)$ for all $1\le i\le m$ and hence
\begin{equation}\label{eq: key product}\frac 1{2^m}\prod_{i=1}^m \bigl(1+(-1)^n\chi(a)\lambda(N_i(y,h(y)))\bigr)=1.\end{equation}
(note that the value $\lambda(g)$ for $g\in\F_q[y]$ is the same whether we view $g$ as a polynomial in $\F_q[y]$ or $\F_{q^k}[y]$, because $k$ is an odd prime and so any prime of $\F_q[y]$ splits into an odd number of primes in $\F_{q^k}[y]$).
Note that the LHS of (\ref{eq: key product}) is nonnegative even if $h$ does not satisfy the conditions (i),(ii) above. Hence if we sum (\ref{eq: key product}) over all $h\in\F_q[y]_{\le n-1}$ and divide by $q^n$ we obtain (using the multiplicativity of $\lambda,\chi$ for the last equality)
\begin{multline*}\P(\mbox{(i),(ii) hold})\le \frac 1{2^mq^n}\sum_{h\in\F_q[y]_{\le n-1}}\prod_{i=1}^m\bigl(1+(-1)^n\chi(a)\lambda(N_i(y,h(y)))\bigr)\\
=\frac 1{2^m}+\frac 1{2^m}\sum_{\emptyset\neq S\subset\{1,\ldots,m\}}(-1)^{n+|S|}\chi(a)^{|S|}\frac 1{q^n}\sum_{h\in\F_q[y]_{\le n-1}}\lambda\left(\prod_{i\in S}N_i(y,h(y))\right).\end{multline*}

Fix $\epsilon>0$. Since $N_i(y,T)$ are separable and pairwise coprime, each of the products $ F_S=\prod_{i\in S}N_i(y,T)$ is separable. Moreover $\deg_T F_S\le km,\,\deg_y F_S\le kmn$. Hence we may apply Conjecture \ref{conj: chowla} (which we are assuming for this proof) to $F_S$ to obtain that for $n\ge n_0(q,d,m,k)$ we have $$\frac 1{q^n}\sum_{h\in\F_q[y]_{\le n-1}}\lambda\left(\prod_{i\in S}N_i(y,h(y))\right)<\epsilon$$ and so
$\P(\mbox{(i),(ii) hold})\le \frac 1{2^m}+\epsilon$. By Proposition \ref{prop: sep for chowla}, for large enough $k\ge k_1(q,d,m)$ (and we may take $k_1(q,d,m)\ge k_0(q,d,m)$) and $n\ge n_1(q,d,m,k)$ (we may take $n_1(q,d,m,k)\ge n_0(q,d,m,k)$) we have $\P(\mbox{(i) holds})\le\P(\mbox{(i),(ii) hold})+\epsilon\le\frac 1{2^m}+2\epsilon$.

Since we can take $m$ as large as we wish and $\epsilon$ as small as we wish it follows that $\lim_{n\to\infty}\P(\mbox{(i) holds})=0$ uniformly in $f_0$, i.e. we have (\ref{eq: need to prove}), which as we observed implies the assertion of the theorem.
\end{proof}

\begin{proof}[Proof of Theorem \ref{thm: chowla}]
Let $f$ be a random polynomial as in Theorem \ref{thm: chowla} and denote by $G_f$ its Galois group. By Theorem \ref{thm: small box main} we have $\lim_{n\to\infty}\P(G_f=S_n\mbox{ or }A_n\,|\,f\mbox{ irreducible})= 1$ as $n\to\infty$. However by Theorem \ref{thm: disc} we have (assuming Conjecture \ref{conj: chowla}) $\P(G_f=A_n)\le\P(\disc_y(f)=\square,\,\con_x(f)=1)\to 0$ as $n\to\infty$, since $G_f=A_n$ implies that $\disc_y(f)=\square$ and $f$ is irreducible, so in particular $\con_x(f)=1$. Since by Theorem \ref{thm: irreducibility} we have $\lim_{n\to\infty}\P(f\mbox{ irreducible})=1-q^{-d}>0$, it follows that $$\lim_{n\to\infty}\P(G_f=S_n\,|\,f\mbox{ irreducible})=\lim_{n\to\infty}\P(G_f=S_n\mbox{ or }A_n\,|\,f\mbox{ irreducible})=1.$$
\end{proof}

\section{The reducible case}
\label{sec: reducible}
So far we have studied the probability of polynomials being irreducible, and described their typical Galois groups in that case. Now we consider the reducible case. In the present section we work with the small box model, so we fix a prime power $q$ and $d\ge 1$ throughout this section and consider the random polynomial
$$f(x,y)=y^n+a_{n-1}(x)y^{n-1}+\ldots+a_1(x)y+a_0(x),\quad a_i\in\F_q[x]_{\le d},$$
with $a_i$ drawn uniformly and independently. 
In the small box model (i.e when $d$ is fixed and we take $n\to\infty$) the polynomial is reducible with positive probability, so now we wish to study its factors.

\subsection{Content and primitive part}
 
What we have seen in the course proving Theorem \ref{1.2} is that if $\con_x(f)=1$ then $f$ is asymptotically almost surely irreducible as $n\to\infty$. Now we have to deal with the case $\con_x(f)\neq 1$. We will be able to reduce this case to the primitive case $\con_x(f)=1$.
Indeed, let us write $f(x,y)=c(y)g(x,y)$ where $c(y)=\con_x(f)$ is monic of degree $k$ and $g(x,y)\in\mathbb{F}_q[x][y]$ is a monic polynomial of $y$-degree $n-k$ which is content-free in the variable $x$ (i.e. $\con_x(g)=1$). We call $g$ the \emph{primitive part} of $f$. If $n$ is large and $k$ is small (the latter usually holds because the GCD of more than one random polynomial tends to be small), our results on the content-free case will show that $g(x,y)$ is almost surely irreducible. As we will see, asymptotically almost surely the Galois group of $f$ is determined by the Galois groups of $g$ and $c$ (up to the ambiguity described in Theorem \ref{thm: reducible}). Since $g$ is content-free we already have significant information about the former, and the latter is easy to handle. In what follows we make this intuition precise.

\begin{lem}\label{7.1}\label{lem: content deg}
Assume $0\le k<n$ then $$\P(\deg_y(\con_x(f))=k)=\left(1-\frac 1{q^d}\right)\cdot\frac 1{q^{dk}}.$$
\end{lem}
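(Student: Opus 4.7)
The plan is to deduce this lemma directly from Proposition \ref{prop: content}, which was proved earlier and already supplies the exact probability that $\con_x(f)$ equals any prescribed monic polynomial $c \in \F_q[y]$ of degree $k$, namely $(1 - 1/q^d)q^{-k(d+1)}$.

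The event $\{\deg_y(\con_x(f)) = k\}$ decomposes as a disjoint union over the $q^k$ monic polynomials $c \in \F_q[y]$ of degree $k$ of the events $\{\con_x(f) = c\}$. Since the probability supplied by Proposition \ref{prop: content} depends only on $\deg c$, summing over these $q^k$ choices yields
$$\P(\deg_y(\con_x(f)) = k) = q^k \cdot \left(1 - \frac{1}{q^d}\right) \cdot \frac{1}{q^{k(d+1)}} = \left(1 - \frac{1}{q^d}\right)\frac{1}{q^{kd}},$$
which is exactly the claim.

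If one preferred a self-contained argument, one could instead proceed as in the proof of Proposition \ref{prop: content} itself: write $f = \sum_{j=0}^d b_j(y) x^j$ with $b_0 \in \F_q[y]_n^\mon$ and $b_j \in \F_q[y]_{\le n-1}$ for $1 \le j \le d$, all uniform and independent, and observe that $\con_x(f) = \gcd(b_0, \ldots, b_d)$. Fixing a monic $c \in \F_q[y]$ of degree $k$ and making the substitution $b_j = c b_j'$ translates the condition $\gcd(b_0, \ldots, b_d) = c$ into the coprimality condition $\gcd(b_0', \ldots, b_d') = 1$ for $b_0' \in \F_q[y]_{n-k}^\mon$ and $b_j' \in \F_q[y]_{\le n-1-k}$. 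Corollary \ref{cor: gcd} gives this probability as $1 - q^{-d}$; the rest is an elementary count of cardinalities. There is no serious obstacle at any stage — the work has already been done in the preliminaries.
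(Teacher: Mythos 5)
Your proposal is correct and is essentially identical to the paper's proof: the paper likewise sums the exact probability from Proposition \ref{prop: content} over the $q^k$ monic polynomials $c\in\F_q[y]$ of degree $k$ to get $q^k\left(1-\frac 1{q^d}\right)\frac 1{q^{(d+1)k}}=\left(1-\frac 1{q^d}\right)\frac 1{q^{dk}}$. The alternative self-contained argument you sketch is fine too, but unnecessary given the proposition.
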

\begin{proof} By Proposition \ref{prop: content} we have
$$\P(\deg(\con_x(f))=k)=\sum_{c\in\F_q[y]\atop{\mathrm{monic}\atop\deg c=k}}\left(1-\frac 1{q^d}\right)\frac 1{q^{(d+1)k}}=q^k\left(1-\frac 1{q^d}\right)\frac 1{q^{(d+1)k}}=\left(1-\frac 1{q^d}\right)\frac 1{q^{dk}}.$$
\end{proof}

\begin{prop}\label{7.2}\label{prop: prim irreducible} $\lim_{n\to\infty}\P(\mbox{the primitive part of }f\mbox{ is irreducible})=1$.
\end{prop}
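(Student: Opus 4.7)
The plan is to reduce the assertion to the already-proved \Cref{thm: irreducibility} by conditioning on the degree $k$ of the content $c=\con_x(f)$. Writing $f=c(y)g(x,y)$ with $g$ monic in $y$ of $y$-degree $n-k$, $\deg_xg\le d$, and $\con_x(g)=1$, the correspondence $f\leftrightarrow(c,g)$ is a bijection; so by symmetry the conditional distribution of $g$ given $\deg_y\con_x(f)=k$ is uniform over the set of monic polynomials in $y$ of $y$-degree $n-k$ and $x$-degree $\le d$ with $\con_x=1$. This is essentially the same bijection used in the proof of \Cref{prop: content}.

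For each fixed $k$, I would invoke \Cref{thm: irreducibility} applied to a random polynomial $h$ of $y$-degree $n-k$ of the usual form. The \emph{proof} of that theorem (not merely its statement) establishes both
$$\P(\con_x(h)=1)\to 1-q^{-d}\quad\text{and}\quad\P(\con_x(h)=1\text{ and }h\text{ is irreducible over }\F_q(y))\to 1-q^{-d}.$$
Dividing gives $\P(h\text{ irreducible over }\F_q(y)\mid\con_x(h)=1)\to 1$; combined with $\con_x(h)=1$ and Gauss's lemma, this says that $h$ is irreducible over $\F_q(x)$ with conditional probability tending to $1$. Since for fixed $k$ the primitive part $g$ of $f$ is distributed exactly as such an $h$ conditioned on $\con_x(h)=1$, we conclude that for every fixed $k\ge 0$,
$$\lim_{n\to\infty}\P(g\text{ reducible}\mid\deg_y\con_x(f)=k)=0.$$

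To finish, I would split
$$\P(g\text{ reducible})=\sum_{k=0}^{n-1}\P(g\text{ reducible}\mid\deg_y\con_x(f)=k)\,\P(\deg_y\con_x(f)=k)$$
and truncate at a large fixed $K$, using \Cref{lem: content deg}, which gives $\P(\deg_y\con_x(f)=k)=(1-q^{-d})q^{-dk}$. The tail $k>K$ contributes at most $q^{-dK}$, which is arbitrarily small, while the head $k\le K$ is a finite sum whose each summand tends to $0$ by the previous paragraph. The only mildly delicate point is that the content degree $k$ is a priori unbounded in $n$, so one cannot simply quote the fixed-$k$ statement; but the geometric tail decay from \Cref{lem: content deg} makes the truncation work without needing an effective rate of convergence in \Cref{thm: irreducibility}. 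This yields $\lim_{n\to\infty}\P(g\text{ reducible})=0$, as required.
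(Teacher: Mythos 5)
Your proposal is correct and is essentially the paper's own argument: condition on $k=\deg_y\con_x(f)$, observe that the primitive part is then uniformly distributed over content-free monic polynomials of $y$-degree $n-k$ (independently of the content), invoke \Cref{thm: irreducibility} (via the two limits established in its proof, together with Gauss's lemma) for each fixed $k$, and truncate the sum over $k$ at a fixed $K$ using the geometric decay from \Cref{lem: content deg}. The only (harmless) difference is that you omit the boundary event $\deg_y\con_x(f)=n$ (all $a_i$ constant, primitive part $=1$), which the paper accounts for separately but which has probability $q^{-dn}\to 0$ and so does not affect the limit.
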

\begin{proof} 
Let $m$ be a fixed natural number to be chosen later. Note that since the decomposition $f(x,y)=c(y)g(x,y)$ with $c=\con_x(f)$ and $\con_x(g)=1$ is unique, conditional on $\deg(\con_xf)=k$ ($0\le k\le n-1$) the primitive part of $f$ is uniformly distributed in the set of polynomials of the form $y^{n-k}+\sum_{i=0}^{n-k-1}b_iy^i,\,b_i\in\F_q[x]_{\le d}$ and moreover is independent of $\con_x(f)$. Hence denoting by $E_k$ the event that such a random polynomial is irreducible and by $E$ the event in the LHS of the assertion, we have (using the fact that $\P(\deg(\con_x(f))=n))=\P(f\in\F_q[y]_n^\mon)=q^{-dn}$)
\begin{multline*}\P(E)\leq\P(\deg(\con_x(f))=n)+\sum_{k=0}^{n-1}\P(\deg(\con_x(f))=k)\P(E_k)
=q^{-dn}+\sum_{k=0}^{n-1}\left(1-\frac 1{q^d}\right)\frac 1{q^{dk}}\P(E_k)\\
\le q^{-dn}+\sum_{k=0}^m\left(1-\frac 1{q^d}\right)\frac 1{q^{dk}}\P(E_k)+\sum_{k=m+1}^{n-1}\left(1-\frac 1{q^d}\right)\frac 1{q^{dk}}.
\end{multline*}
Noting that the first summand in the last expression tends to 0 as $n\to\infty$, the second summand also tends to 0 for any fixed $m$ by Theorem \ref{thm: irreducibility} and the third summand can be made arbitrarily small by choosing $m$ large enough, we obtain $\lim_{n\to\infty}\P(E)=0$.

\end{proof}
\subsection{The Galois group}
Now we would like to study the Galois group of the splitting field of our random polynomial $f$ over $\mathbb{F}_q(x)$. By Theorem \ref{1.3} we know that given that $f$ is irreducible, then with probability tending to $1$ as $n\to\infty$ it is separable and its Galois group is $A_n$ or $S_n$. At present we have no way of distinguishing between these two groups without assuming Conjecture \ref{1.4}. Similarly, in the reducible case we will obtain a few possibilities for the Galois group, however we will not be able to distinguish between some of them without assuming Conjecture \ref{1.4}. 

Write $f(x,y)=c(y)g(x,y)$, with $c=\con_x(f)$ and $g$ the primitive part of $f$. We will explore the Galois group of $f$ over $\mathbb{F}_q(x)$ through the Galois groups of the splitting fields of $c$ and $g$. First, note that it does not matter whether $c$ is separable or not, its splitting field over $\mathbb{F}_q(x)$ is a Galois extension of $\mathbb{F}_q(x)$ of the form $\mathbb{F}_{q^r}(x)$. Its Galois group is cyclic and generated by the automorphism acting as the $q$-Frobenius coefficient-wise.

It remains to understand what is the splitting field of $g(x,y)$ over $\mathbb{F}_q(x)$ and how it interacts with that of $c$. A priori we do not know that the splitting field is a Galois extension of $\F_q(x)$, unless the polynomial $g$ is separable. However, that happens almost surely, as we will now show. 
\begin{prop}\label{7.4}\label{prop: Gal g} Writing $f=cg,\,c=\con_x(f)$ we have $$\lim_{n\to\infty}\P\left(g\mbox{ is separable and }\Gal(g/\F_q(x))=S_{\deg g}\mbox{ or }A_{\deg g}\right)=1.$$ 
\end{prop}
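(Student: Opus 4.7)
The approach is to condition on the degree $k$ of the content $c = \con_x(f)$ and reduce to the irreducible primitive case already handled by Theorems \ref{thm: irreducibility} and \ref{thm: small box main} together with Lemma \ref{lem: irred and sep}. By the uniqueness of the decomposition $f = c g$ with $c \in \F_q[y]^{\mon}$ and $\con_x(g) = 1$, conditional on $\deg c = k$ the primitive part $g$ is uniformly distributed over the content-free monic (in $y$) polynomials in $\F_q[x][y]$ of $y$-degree $m := n - k$ whose $x$-coefficients have degree at most $d$; in other words, $g$ is a sample from the small box model of degree $m$, conditioned on the event $A = \{\con_x(g) = 1\}$.

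For each fixed $k$, consider on an unconditional small box sample of degree $m$ the additional event $B = \{g \text{ irreducible, separable, } \Gal(g/\F_q(x)) \in \{S_m, A_m\}\}$. Theorem \ref{thm: irreducibility} gives $\P(A) \to 1 - q^{-d}$, while Theorems \ref{thm: irreducibility} and \ref{thm: small box main} together with Lemma \ref{lem: irred and sep} give $\P(B) \to 1 - q^{-d}$ as well. Moreover, if $g$ is irreducible and has positive $x$-degree then automatically $\con_x(g) = 1$, while the probability that $g$ lies entirely in $\F_q[y]$ is only $q^{-dm}$; hence $\P(B \setminus A) = o(1)$ and $\P(B \mid A) \to 1$ as $m \to \infty$. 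Applied with $m = n - k$, this yields for each fixed $k$
$$\lim_{n \to \infty} \P(g \text{ fails the conclusion} \mid \deg c = k) = 0.$$

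To combine across $k$ I would invoke the exact tail bound of Lemma \ref{lem: content deg}, $\P(\deg c = k) = (1 - q^{-d}) q^{-dk}$. Given $\epsilon > 0$, choose $K$ with $\sum_{k > K} \P(\deg c = k) < \epsilon$; then
$$\P(g \text{ fails}) \le \sum_{k=0}^{K} \P(\deg c = k)\, \P(g \text{ fails} \mid \deg c = k) + \epsilon,$$
the finite sum over $k \le K$ tends to $0$ as $n \to \infty$ by the previous paragraph, and letting $\epsilon \to 0$ finishes the proof. The only---rather minor---obstacle is the bookkeeping needed to match the conditional distribution of $g$ with the unconditional small box model and to relate the events $A$ and $B$; both reductions follow at once from the exact probabilities already computed in Section \ref{sec:prelim} and from the preceding theorems in the irreducible case.
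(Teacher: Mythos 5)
Your proposal is correct and follows essentially the same route as the paper: the paper proves Proposition \ref{prop: Gal g} by repeating the argument of Proposition \ref{prop: prim irreducible} (condition on $\deg\con_x(f)=k$, note that $g$ is then a small-box sample of degree $n-k$ with $\con_x(g)=1$, invoke Theorem \ref{thm: small box main} together with Lemma \ref{lem: irred and sep} for each fixed $k$, and sum over $k$ using the geometric decay of $\P(\deg\con_x(f)=k)$). Your explicit comparison of the conditioned and unconditioned small-box models via $\P(B\setminus A)\le q^{-dm}$ is a slightly more careful rendering of a step the paper leaves implicit, but it is the same argument.
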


\begin{proof} The proof is the same as the proof of Proposition \ref{prop: prim irreducible}, except one invokes Theorem \ref{thm: small box main} instead of Theorem \ref{thm: irreducibility}.
\end{proof}

We are now ready to prove the main results of this section - Theorems \ref{thm: reducible} and \ref{thm: reducible chowla}.

\begin{proof}[Proof of Theorem \ref{1.6}] 
Denote $k=\deg c$. First we note that by Proposition \ref{prop: Gal g}, with limit probability 1 the polynomial $g$ is separable with $\Gal(g)=S_{n-k}$ or $A_{n-k}$. By Lemma \ref{lem: content deg} with limit probability 1 we also heve $n-k=\deg g\ge 5$. Hence it is enough to show that if $g$ is separable, $\Gal(g)=S_{n-k}$ or $A_{n-k}$ and $n-k\ge 5$ then $\Gal(f)$ is one of the groups (a),(b),(c) described in Theorem \ref{thm: reducible}. 

Henceforth we assume $g$ is separable, $\Gal(g)=S_{n-k}$ or $A_{n-k}$ and $n-k\ge 5$. Denote by $M_1$ the splitting field of $c(y)$ over $\mathbb{F}_q(x)$, and by $M_2$ the splitting field of $g(x,y)$ over $\F_q(x)$ (we are assuming $M_1,M_2$ are contained in a common algebraic closure of $\F_q(x)$). As noted before, we have $M_1=\mathbb{F}_{q^r}(x)$ for some $r\geq 1$, and it is a Galois extension of $\mathbb{F}_q(x)$ with cyclic Galois group $C$ of order $r$. Since $g$ is separable, $M_2$ is also a Galois extension of $\mathbb{F}_q(x)$. The splitting field of $f=cg$ over $\mathbb{F}_q(x)$ is the compositum $L=M_1M_2$. This is also a Galois extension of $\mathbb{F}_q(x)$, as a compositum of Galois extensions. Thus we see that the splitting field of $f$ is a Galois extension of $\F_q(x)$ and we wish to compute its Galois group. 

First of all, there is a natural map:
$$\varphi:\text{Gal}(L/\mathbb{F}_q(x))\to\text{Gal}(M_1/\mathbb{F}_q(x))\times\text{Gal}(M_2/\mathbb{F}_q(x))$$
given by $\sigma\mapsto (\sigma|_{M_1}, \sigma|_{M_2})$. This is clearly an injective group homomorphism, and further its projection to each factor $\Gal(M_i/\F_q(x))$ is surjective. The group $C=\text{Gal}(M_1/\mathbb{F}_q(x))\cong\Z/r\Z$ is the Galois group of $c(y)$ over $\mathbb{F}_q(x)$.  Also, the group $\text{Gal}(M_2/\mathbb{F}_q(x))$ is the Galois group of $g$ over $\mathbb{F}_q(x)$, which by assumption is isomorphic to $S_{n-k}$ or $A_{n-k}$. Hence $G=\Gal(f/\F_q(x))=\text{Gal}(L/\mathbb{F}_q(x))$ is isomorphic to a subgroup $H\leqslant C\times A_{n-k}$ or $H\leqslant C\times S_{n-k}$ which projects surjectively onto each factor. It remains to show that $H$ is one of the groups (a),(b),(c) described in the statement of Theorem \ref{thm: reducible}.

First consider the case $\Gal(g)=A_{n-k}$. Recall that $n-k\ge 5$, so $A_{n-k}$ is simple. Denote by $p_1:H\to C,\,p_2:H\to A_{n-k}$ the obvious projections and $N_1=p_1(\ker(p_2)),N_2=p_2(\ker(p_1))$. By Goursat's lemma (see \cite[Theorem 2]{CW75}) the image of $H$ in $C/N_1\times A_{n-k}/N_2$ is the graph of a group isomorphism $C/N_1\xrightarrow{\sim}A_{n-k}/N_2$. Since $A_{n-k}$ is simple non-abelian, we must have $N_1=C,N_2=A_{n-k}$ and hence $H=C\times A_{n-k}$, i.e. case (b) in Theorem \ref{thm: reducible}.

Next assume $\Gal(g)=S_{n-k}$ and once again denote by $p_1:H\to C,\,p_2:H\to S_{n-k}$ the obvious projections and $N_1=p_1(\ker(p_2)),N_2=p_2(\ker(p_1))$. Once again by Goursat's lemma the image of $H$ in $C/N_1\times S_{n-k}/N_2$ is the graph of a group isomorphism $C/N_1\xrightarrow{\sim}S_{n-k}/N_2$. 

If $N_1=C,N_2=S_{n-k}$ then $H=C\times S_{n-k}$, i.e. case (a) in Theorem \ref{thm: reducible}. The only other normal subgroup of $S_{n-k}$ with abelian quotient is $N_2=A_{n-k}$ and then $r=|C|$ is even and $N_1\leqslant C$ is the unique subgroup of $C$ of order $r/2$. In this case $H$ is the group described in case $(c)$ of Theorem \ref{thm: reducible}. This completes the proof.

\end{proof}

\begin{proof}[Proof of Theorem \ref{thm: reducible chowla}]
The proof is similar to the proof of Theorem \ref{thm: reducible} above, so we adopt the setup and notation of the above proof and only explain the steps which are different.

First of all since we are assuming that $q$ is odd and Conjecture \ref{conj: chowla}, we can apply the result of Theorem \ref{thm: chowla} instead of Theorem \ref{thm: small box main} and thus assume that $\Gal(g/\F_q(x))=S_{n-k}$ (with $n-k\ge 5$) and moreover, using Theorem \ref{thm: disc}, that $\disc_y(g)$ is not a square in $\overline \F_q[x]$. Thus the Galois group $G=\Gal(f/\F_q(x))$ is isomorphic to a subgroup $H\leqslant C\times S_{n-k}$ projecting surjectively on each factor.

This leaves only two cases: $H=C\times S_{n-k}$, giving the assertion of the theorem; and the second case $N_2=A_{n-k}$, where $N_2=p_2(\ker(p_1))\cong\Gal(g/M_1)=\Gal(g/\F_{q^r}[x])$. However the latter Galois group cannot be contained in $A_{n-k}$ by the discriminant criterion, since by our assumption above $\disc_y(g)$ is not a square in $\overline \F_q(x)$. We obtain a contradiction, thus eliminating this case. We are left with the case $\Gal(f/\F_q(x))\cong C\times S_{n-k}$ as required.

\end{proof}

\bibliography{mybib}
\bibliographystyle{alpha}

\end{document}